\numberwithin{equation}{section}
\newtheorem{theorem}{Theorem}[section]
\newtheorem{lemma}[theorem]{Lemma}
\newtheorem{corollary}[theorem]{Corollary}
\theoremstyle{definition}
\newtheorem{definition}[theorem]{Definition}
\newtheorem{convention}[theorem]{Convention}
\theoremstyle{remark}
\newtheorem{remark}[theorem]{Remark}
\newcommand{\secref}[1]{Section~\ref{#1}}
\newcommand{\appref}[1]{Appendix~\ref{#1}}
\newcommand{\thmref}[1]{Theorem~\ref{#1}}
\newcommand{\lemref}[1]{Lemma~\ref{#1}}
\newcommand{\corref}[1]{Corollary~\ref{#1}}
\newcommand{\defref}[1]{Definition~\ref{#1}}
\newcommand{\remref}[1]{Remark~\ref{#1}}
\newcommand{\conref}[1]{Convention~\ref{#1}}
\title[Spectral-Operator Calculus I]{Spectral-Operator Calculus I:
Trace-Form Evaluators and Spectral Growth in the Self-Adjoint Setting}
\author{John Homer}
\date{December 2025}
\address{Independent Researcher, Longwood, Florida, USA}
\email{am9obi5ob21lcg-arxiv@yahoo.com}
\subjclass[2020]{47A10, 47B10, 46Lxx}
\keywords{spectral operator, functional calculus, trace-form evaluator,
  dominated convergence, spectral growth, admissible kernels}
\providecommand{\tightlist}{%
  \setlength{\itemsep}{0pt}%
  \setlength{\parskip}{0pt}%
}
\begin{document}

\begin{abstract}
We develop Spectral-Operator Calculus (SOC), an axiomatic calculus for
scalar evaluation of operator-generated spectral observables. This paper
(SOC-I) treats the self-adjoint setting, where observables are bounded
Borel transforms and locality is enforced via additivity across spectral
partitions. Under unitary invariance, extensivity on orthogonal sums,
projector-locality, and a dominated-convergence continuity condition, we
prove a rigidity theorem on a natural trace-class envelope: every
admissible evaluator agrees with a weighted trace of a single Borel
nondecreasing profile applied through the functional calculus. We then
introduce a spectral growth taxonomy based on eigenvalue counting
asymptotics and show that the polynomial growth regime is stable under
the basic constructions of the calculus. These results supply an
arithmetic-neutral analytic backbone for subsequent SOC parts and for
applications to concrete spectral models. A companion part treats the
sectorial/holomorphic setting, where locality is formulated on log-scale
via scale-band decompositions and positive kernels rather than spectral
projections.
\end{abstract}

\maketitle

\emph{Series note.} The SOC series develops an evaluator calculus for
spectral observables generated by functional calculi. The common spine
is an axiomatics for scalar evaluators (unitary invariance, extensivity,
localization under coarse-graining, dominated-continuity, and
normalization), together with rigidity and stability consequences that
are designed to persist under admissible limiting schemes.

SOC-I develops the self-adjoint/PVM-localized model case. It fixes the
evaluator axioms, proves a trace-form rigidity theorem on a natural
trace-class envelope, and introduces a spectral growth taxonomy and
continuity framework for scalar evaluations built from spectra. A
companion part develops the sectorial/holomorphic model case (Haase-type
functional calculus; see Haase \cite{Haase2020}), where the localization
gate is implemented on log-scale by scale-band decompositions and kernel
positivity rather than by spectral projections.

Later parts build on these model cases to study admissible kernels and
cutoff schemes, analytic continuation procedures, and boundary and
reconstruction questions.

\section{Introduction}\label{introduction}

\label{sec:introduction}

The basic objects in this paper are self-adjoint operators, their
spectral measures, and bounded Borel transforms. Concretely, we work on
a fixed complex separable Hilbert space \(H\), regard self-adjoint
operators \(D\) on \(H\) as models, and treat bounded Borel functions
\(f:\mathbb{R}\to\mathbb{C}\) as generating bounded operators \(f(D)\)
through the functional calculus provided by the spectral theorem. These
operators \(f(D)\) represent the spectral geometries on which our
scalar-valued functionals act. In this sense, all primitive data in the
calculus are operators and spectra, not bare scalar sequences.

At the level of structure, SOC is organized around three interacting
gates: (i) a localization rule that specifies how evaluator values
decompose under coarse-graining; (ii) a dominated-continuity rule that
legalizes passage to limits inside the evaluator; and (iii) a
normalization/nontriviality rule that excludes degenerate assignments.
The operator class and functional calculus supply the observable family
and the available coarse-grainings. In the self-adjoint setting of this
part, coarse-graining is spectral: we partition the spectrum and demand
additivity across the corresponding spectral projections. In the
sectorial setting, spectral projections are unavailable in general; SOC
instead uses scale-band decompositions \(t \mapsto \psi(tA)\) and
controls interference through a positive kernel on log-scale. This paper
develops the PVM-localized model case and proves that the gates force
trace-form rigidity on the trace-class envelope.

\subsection{Main results and
contributions}\label{main-results-and-contributions}

The central structural object of the paper is an evaluator \[
\mathcal{E} : \mathcal{C} \to [0,\infty]
\] defined on a class \(\mathcal{C}\) of bounded spectral geometries.
The class \(\mathcal{C}\) is required to be stable under bounded Borel
transforms and under the operations needed later for examples and
continuity statements. The evaluator \(\mathcal{E}\) is required to
satisfy natural axioms: unitary invariance, locality on spectral
projections, extensivity on orthogonal sums, a dominated-convergence
continuity property, and a mild growth condition that prevents
trivialization (see Axiom A5 in \secref{subsec:axioms}). The guiding
question is whether such an abstract evaluator is forced to be of trace
form, and, if so, how rigid that representation is.

Our first main result is \thmref{thm:trace-form}, a trace-form
representation theorem for evaluators. Under the standing hypotheses on
\(\mathcal{C}\) and the evaluator axioms, there exist a Borel-measurable
nondecreasing function \(h:\mathbb{R}\to\mathbb{R}\) and a normalization
constant \(c_{\mathcal{E}}\ge 0\) such that \(\mathcal{E}(f(D))\) is
given by the trace of \(h(f(D))\) whenever this operator is trace class.
In particular, if we restrict \(\mathcal{E}\) to the trace-class
envelope \[
\mathcal{C}_1 := \mathcal{C} \cap \mathcal{S}_1(H),
\] then every admissible value \(\mathcal{E}(X)\) with
\(X\in\mathcal{C}_1\) is completely determined by the spectral geometry
of \(X\) together with the fixed profile \(h\) and the overall scale
\(c_{\mathcal{E}}\). In this sense the evaluator has no additional
degrees of freedom once the profile and normalization are fixed.

The second main result is a spectral growth taxonomy for self-adjoint
operators. We work with counting functions \(N_D(\Lambda)\) and related
spectral distributions, and use their asymptotic behaviour as
\(\Lambda\to\infty\) to organize operators into growth classes. In
particular, we single out a polynomial growth regime that captures the
standard elliptic and Laplace-type models and show in
\thmref{thm:growth-closure} that this class is stable under the basic
constructions of the calculus, including monotone regularly varying
functional calculus, finite orthogonal sums, and tensor products. This
growth framework is then used in examples and in the continuity theory
for evaluators in \secref{sec:examples}.

A further theme of the paper is continuity under natural limiting
procedures. We study approximations of spectral geometries by
truncations and cutoffs and show that the evaluator values behave well
under dominated limits. In particular, we prove that monotone and
dominated approximations by bounded transforms do not introduce spurious
boundary contributions in the trace-form representation.

Conceptually, the contribution of this paper is twofold. First, at the
level of scalar evaluation, the axioms force a rigidity phenomenon on
the trace-class envelope: any evaluator satisfying the standing
structural conditions must be given by a single nondecreasing profile
applied through the functional calculus and traced, so that scalar
values carry no independent structure beyond what is encoded in the
spectra and the chosen profile. Second, on the geometric side, the
spectral growth classification and closure results provide a stable
regime in which the spectral calculus and the trace-form representation
interact well under the operations needed in later parts of the series.

Throughout, the presentation is self-contained and arithmetic-neutral.
All statements are made at the level of operators, spectra, and traces,
without appealing to arithmetic or number-theoretic input. The intent is
that the evaluator representation, growth taxonomy, and continuity
framework developed here can be reused across different spectral models
and applications without revisiting these analytic foundations.

\subsection{Relation to prior work}\label{relation-to-prior-work}

There is a substantial literature on trace ideals, unitarily invariant
norms, and operator inequalities, with classical references including
Reed-Simon \cite{ReedSimon1972}, Simon's monograph on trace ideals
\cite{Simon2005}, and Bhatia's text on matrix analysis
\cite{Bhatia1997}. In that setting one typically starts from a given
trace and studies the properties of associated norms and functionals on
operator ideals. The perspective adopted here is different: we start
from an abstract evaluator \(\mathcal{E}\) defined on a class
\(\mathcal{C}\) of spectral geometries, subject only to unitary
invariance, projector-locality, dominated continuity, and a growth
bound, and obtain a universal trace-form representation for such
evaluators on a trace-class envelope. In this sense
\thmref{thm:trace-form} can be viewed as a representation theorem for
evaluator functionals, rather than as a study of a fixed underlying
trace.

Classical uniqueness theorems for symmetric gauge functionals work in
the opposite direction: they assume the trace or norm structure is given
a priori and then classify functionals compatible with it, whereas here
trace form itself is derived as a consequence of the evaluator axioms,
without presupposing any trace or ideal structure.

On the functional-calculus side, SOC is modular: it takes as input an
operator class equipped with a functional calculus and a compatible
notion of coarse-graining, and it studies which scalar evaluators are
forced by invariance, locality, and dominated-limit legality. SOC-I
instantiates this program in the self-adjoint/Borel setting, where the
spectral theorem supplies both the calculus and the canonical
coarse-grainings (spectral partitions via the PVM). Haase's framework
for sectorial operators \cite{Haase2020} supplies the corresponding
input for a second SOC model case based on holomorphic functional
calculus, where PVM locality is replaced by scale-band decompositions
and kernel positivity on log-scale. The novelty in SOC is not the
existence of these calculi, but the rigidity and stability constraints
on evaluators that follow once a localization structure is fixed.

On the growth side, the taxonomy introduced here is influenced by the
language of regular variation and spectral asymptotics.
Counting-function techniques and regularly varying functions are
standard tools in the analysis of eigenvalue distributions, as in the
work of Bingham-Goldie-Teugels \cite{BGT1987} and in more recent
treatments of spectral densities and hypertraces in noncommutative
geometry \cite{CiprianiSauvageot2023}, as well as in explicit
counting-function studies for fractal sprays and related models
\cite{KombrinkSchmidt2023}. Our contribution is to formulate a growth
classification directly in terms of counting functions and to prove
stability of the polynomial class under the structural operations
required by the evaluator calculus, such as functional calculus,
orthogonal sums, and tensor products, without committing to a specific
geometric model.

This paper should therefore be read as an operator-theoretic calculus
for evaluators and spectral growth, rather than as a contribution to the
fine structure of trace ideals or spectral asymptotics. It does not aim
to optimize asymptotic constants or to exhaust the classical theory;
instead, it isolates a structural setting in which evaluators, spectral
geometries, and growth conditions fit together coherently, and in which
the main representation and continuity results can be applied across
different models without further analytic reconstruction. The results of
SOC-I are intended to be reusable: they provide an analytic backbone for
later work on kernels, cutoff schemes, and flow constructions on
spectral data, while the present part can be read independently of any
later developments in the series.

\subsection{Organization of the paper}\label{organization-of-the-paper}

\secref{sec:preliminaries} collects the spectral calculus, evaluator
notation, and the standing assumptions on the stable class
\(\mathcal{C}\). \secref{sec:axioms-main} states the evaluator axioms
and the two main results: the trace-form representation theorem and the
spectral growth taxonomy. \secref{sec:proofs} proves these results,
delegating measure-theoretic details to \appref{app:trace-form} and
counting-function estimates to \appref{app:growth-stability}.
\secref{sec:examples} illustrates the continuity theory for evaluators
and gives examples of spectral geometries in the growth classes
introduced in \secref{sec:axioms-main}.

\section{Preliminaries}\label{preliminaries}

\label{sec:preliminaries}

Throughout, \(H\) denotes a complex separable Hilbert space with inner
product \(\langle \cdot,\cdot\rangle\) linear in the second argument and
norm \(\|x\| = \sqrt{\langle x,x\rangle}\). All operators are densely
defined and self-adjoint unless explicitly stated otherwise. For a
self-adjoint operator \(D\) on \(H\), we write \(\sigma(D)\) for its
spectrum and \(E_D\) for the associated projection-valued spectral
measure on the Borel subsets of \(\mathbb{R}\).

\subsection{Spectral measures and functional
calculus}\label{spectral-measures-and-functional-calculus}

\label{subsec:spectral-calculus}

Let \(D\) be self-adjoint on \(H\). By the spectral theorem, there
exists a unique projection-valued measure \(E_D\) on the Borel
\(\sigma\)-algebra of \(\mathbb{R}\) such that
\begin{equation}\label{eq:spectral-decomposition}
D = \int_{\mathbb{R}} \lambda\, dE_D(\lambda)
\end{equation} in the sense of strong operator convergence on the domain
of \(D\); see Reed-Simon \cite{ReedSimon1972}*\{Theorem VIII.3\}. For
any bounded Borel function \(f:\mathbb{R}\to\mathbb{C}\), the bounded
operator \begin{equation}\label{eq:spectral-transform}
f(D) := \int_{\mathbb{R}} f(\lambda)\, dE_D(\lambda)
\end{equation} is defined by spectral calculus. We will refer to such
operators \(f(D)\), with \(f\) bounded Borel, as spectral geometries
associated with \(D\). In particular, the basic objects of the calculus
will be self-adjoint operators \(D\), their spectral measures \(E_D\),
and the bounded spectral geometries \(f(D)\) built from them. If \(f\)
is real-valued, then \(f(D)\) is self-adjoint. The assignment
\(f\mapsto f(D)\) is a unital \(*\)-homomorphism from the algebra of
bounded Borel functions on \(\mathbb{R}\) into the bounded operators on
\(H\).

For vectors \(x,y\in H\), the scalar measure \[
\mu_{x,y}(B) := \langle E_D(B)x,y\rangle, \qquad B\subset\mathbb{R}\ \text{Borel},
\] is complex-valued and countably additive; when \(x=y\) we write
\(\mu_x := \mu_{x,x}\), a finite positive measure. For bounded Borel
\(f\), \[
\langle f(D)x,y\rangle = \int_{\mathbb{R}} f(\lambda)\, d\mu_{x,y}(\lambda).
\]

When \(D\) has pure-point spectrum with finite multiplicities, we write
\(\{\lambda_n(D)\}_{n\ge1}\) for its eigenvalues (repeated according to
multiplicity) and choose an orthonormal basis of eigenvectors. Then
\(D\) is unitarily equivalent to a diagonal operator on \(\ell^2\), and
for bounded Borel \(f\), \[
f(D) \sim \operatorname{diag}\big(f(\lambda_1(D)), f(\lambda_2(D)),\ldots\big).
\]

\begin{remark}[Borel functional calculus]\label{rem:borel-functional-calculus}
In what follows, whenever a Borel-measurable function \(h:\mathbb{R}\to\mathbb{R}\) is applied to a self-adjoint operator \(D\), the operator \(h(D)\) is understood in the sense of the Borel functional calculus described above. In particular, \thmref{thm:trace-form} will invoke \(h(f(D))\) with \(f\) bounded Borel and \(h\) Borel-measurable and nondecreasing; no continuity of \(h\) beyond Borel measurability is required to define \(h(f(D))\).
\end{remark}

\subsection{Trace class operators and
traces}\label{trace-class-operators-and-traces}

Let \(\mathcal{S}_1(H)\) denote the ideal of trace-class operators on
\(H\). For \(T \in \mathcal{S}_1(H)\), its trace is defined by \[
\operatorname{Tr}(T) = \sum_{n\ge1} \langle T e_n, e_n\rangle,
\] where \(\{e_n\}\) is any orthonormal basis of \(H\); the value is
independent of the basis. If \(T\) is positive trace class,
\(\operatorname{Tr}(T)\) coincides with the sum of its eigenvalues
counted with multiplicity.

For a self-adjoint \(D\) and bounded Borel \(f\), the operator \(f(D)\)
need not be trace class. Whenever \(f(D) \in \mathcal{S}_1(H)\), the
spectral theorem gives \begin{equation}\label{eq:trace-integral}
\operatorname{Tr}(f(D)) = \int_{\mathbb{R}} f(\lambda)\, d\nu_D(\lambda)
\end{equation} where the spectral trace measure \(\nu_D\) is defined by
\[
\nu_D(B) = \operatorname{Tr}(E_D(B)), \qquad B \subset \mathbb{R}\ \text{Borel},
\] whenever this expression is finite. In many situations, \(\nu_D\) is
infinite on unbounded sets but finite on compact sets; in that case the
above formula is interpreted under explicit integrability or cutoff
hypotheses stated later.

A regularized trace on an operator \(A\) is any assignment
\(A \mapsto \operatorname{Tr}_{\mathrm{reg}}(A)\) defined on a class of
operators that extends \(\operatorname{Tr}\) on \(\mathcal{S}_1(H)\) and
is compatible with spectral calculus. In this paper we only use
spectral-cutoff regularization. For each self-adjoint \(D\) we fix a
family of bounded Borel functions
\(\chi_\Lambda : \mathbb{R} \to [0,1]\) with
\(\chi_\Lambda(\lambda) = 1\) for \(|\lambda| \le \Lambda\),
\(\chi_\Lambda(\lambda) = 0\) for \(|\lambda| \ge 2\Lambda\), and
\(\chi_\Lambda(\lambda) \to 1\) pointwise as \(\Lambda \to \infty\). Let
\[
T_\Lambda(D) = \chi_\Lambda(D)
\] be the corresponding spectral cutoff operators. Whenever the limit
exists, we define the spectral-cutoff trace of \(f(D)\) by \[
\operatorname{Tr}_{\mathrm{cut}}(f(D)) = \lim_{\Lambda \to \infty} \operatorname{Tr}\big(T_\Lambda(D)\,f(D)\big).
\] Whenever we speak of a regularized trace or a compatible regularized
trace in the sequel, we mean this spectral-cutoff trace
\(\operatorname{Tr}_{\mathrm{cut}}\), taken with respect to the fixed
family \(\{T_\Lambda(D)\}\). No other regularization schemes are used in
this paper.

\subsection{Counting functions and
growth}\label{counting-functions-and-growth}

When \(D\) has discrete spectrum with finite multiplicities accumulating
only at \(\pm\infty\), its eigenvalues can be numbered as
\(\{\lambda_n(D)\}_{n\ge1}\), and the associated counting function is \[
N_D(\Lambda) := \#\{n\ge1 : |\lambda_n(D)| \le \Lambda\}, \qquad \Lambda\ge0,
\] where each eigenvalue is counted with multiplicity.

\begin{convention}[Counting-function regularity]\label{conv:counting-function-regularity}
Whenever we work with a counting function \(N_D\), we assume that \(D\) has discrete spectrum with finite multiplicities, so that
\[
N_D(\Lambda) < \infty \qquad \text{for all } \Lambda \ge 0,
\]
and that \(N_D(\Lambda)\) is eventually nondecreasing as \(\Lambda\to\infty\); that is, there exists \(\Lambda_0 \ge 0\) such that
\[
\Lambda_1 \le \Lambda_2,\ \Lambda_1,\Lambda_2 \ge \Lambda_0
\quad\Longrightarrow\quad
N_D(\Lambda_1) \le N_D(\Lambda_2).
\]

The asymptotic behavior of \(N_D(\Lambda)\) as \(\Lambda\to\infty\) defines the growth class of \(D\). Typical examples include polynomial growth
\[
N_D(\Lambda) \sim C\, \Lambda^d, \qquad \Lambda\to\infty,
\]
stretched-exponential behavior
\[
N_D(\Lambda) \sim \exp\big(c\, \Lambda^\alpha\big), \qquad c>0,\ 0<\alpha<1,
\]
and logarithmic or slower growth. These classes will be formalized in \secref{sec:axioms-main} and shown to be stable under the structural operations needed for the spectral calculus.
\end{convention}

\begin{remark}[Counting functions and spectral measures]\label{rem:counting-vs-spectral}
Three related objects will appear repeatedly:

- The \emph{counting function} \(N_D(\Lambda)\) counts eigenvalues of a discrete-spectrum operator \(D\) up to a cutoff \(\Lambda\).
- The \emph{trace spectral measure} \(\nu_D\) of \(D\) is defined (when finite) by
  \[
  \nu_D(B) := \operatorname{Tr}(E_D(B)), \qquad B \subset \mathbb{R}\ \text{Borel},
  \]
  and encodes multiplicities via the spectral projections \(E_D(B)\).
- Given an evaluator \(\mathcal{E}\), the \emph{evaluator measure} \(\mu_D\) is defined (when \(E_D(B)\in\mathcal{C}\)) by
  \[
  \mu_D(B) := \mathcal{E}(E_D(B)).
  \]

In the growth taxonomy of \secref{subsec:growth-classes} and \appref{app:growth-stability} we work primarily with \(N_D\). In \appref{app:trace-form} and \secref{subsec:proof-trace-form} we work with the scalar measures \(\nu_D\) and \(\mu_D\) attached to the spectral measure \(E_D\); there we use the Radon-Nikodym theorem to express \(\mu_D\) as a density with respect to \(\nu_D\) (see Remark\nobreakspace\ref{rem:RN-density-Borel}). These three objects carry complementary information and will not be interchanged: \(N_D\) is a counting function, \(\nu_D\) encodes the ordinary trace, and \(\mu_D\) encodes the evaluator.
\end{remark}

\subsection{Spectral preorder and
majorization}\label{spectral-preorder-and-majorization}

Let \(X\) and \(Y\) be self-adjoint operators on \(H\) with discrete
spectra and finite multiplicities. Write \(\{\lambda_n(X)\}_{n\ge1}\)
and \(\{\lambda_n(Y)\}_{n\ge1}\) for their eigenvalues, counted with
multiplicity and arranged in nonincreasing order of absolute value: \[
|\lambda_1(X)| \ge |\lambda_2(X)| \ge \cdots, \qquad |\lambda_1(Y)| \ge |\lambda_2(Y)| \ge \cdots.
\] We write \[
\lambda_n^{\downarrow}(X) := \text{the \(n\)-th eigenvalue of \(X\) in this ordering}.
\]

\begin{definition}[Spectral preorder]\label{def:spectral-preorder}
For self-adjoint \(X,Y\) with discrete spectra as above, we write
\[
X \preceq Y
\]
if for every \(k\ge1\),
\[
\sum_{n=1}^k \lambda_n^{\downarrow}(X) \;\le\; \sum_{n=1}^k \lambda_n^{\downarrow}(Y).
\]
This preorder is invariant under unitary conjugation and coincides with the usual notion of spectral majorization; see, for example, Fan \cite{Fan1951} and Kosaki \cite{Kosaki1982} for background and equivalent formulations (e.g. in terms of Ky Fan norms and convex trace functionals).

When \(X\) and \(Y\) are positive and trace class, the spectral preorder \(\preceq\) interacts naturally with unitarily invariant trace functionals. In particular, if \(X\preceq Y\) and \(\Phi:[0,\infty)\to[0,\infty)\) is convex and nondecreasing with \(\Phi(0)=0\), then
\[
\operatorname{Tr}\big(\Phi(X)\big) \;\le\; \operatorname{Tr}\big(\Phi(Y)\big),
\]
whenever both traces are finite. This type of inequality underlies the spectral monotonicity properties used later in the growth taxonomy and evaluator arguments.
\end{definition}

\begin{remark}[Scope of the preorder in this paper]\label{rem:preorder-scope}
In this paper we use the spectral preorder \(\preceq\) only for operators with discrete spectra and finite multiplicities, as in \defref{def:spectral-preorder}. All subsequent appearances of \(\preceq\) refer to that setting. Extensions of spectral majorization to more general self-adjoint operators via strong resolvent limits are well developed in the literature (see, for example, Bhatia \cite{Bhatia1997}*{Chapter III}), but we do not rely on any such extension here, and no properties of \(\preceq\) beyond the discrete-spectrum case are needed for the results of \secref{sec:axioms-main}, \secref{sec:proofs}, and \secref{sec:examples}.
\end{remark}

\subsection{Evaluators and structural
notation}\label{evaluators-and-structural-notation}

A spectral geometry (or spectral transform) of a self-adjoint operator
\(D\) is any bounded Borel transform \(f(D)\) obtained from \(D\) via
the functional calculus of \secref{subsec:spectral-calculus}. Thus \[
f(D) = \int_{\mathbb{R}} f(\lambda)\, dE_D(\lambda)
\] for some bounded Borel function \(f:\mathbb{R}\to\mathbb{C}\), and
\(f(D)\) is a bounded operator on \(H\). When \(f\) is real-valued,
\(f(D)\) is self-adjoint.

We will work with a fixed \emph{stable class} \(\mathcal{C}\) of bounded
spectral geometries. By this we mean a family \(\mathcal{C}\) of
self-adjoint bounded operators on \(H\) satisfying:

\begin{enumerate}
\def\labelenumi{\arabic{enumi}.}
\tightlist
\item
  If \(X\in\mathcal{C}\) and \(f:\mathbb{R}\to\mathbb{R}\) is bounded
  and Borel measurable, then \(f(X)\in\mathcal{C}\). (Since \(f\) is
  real-valued and \(X\) is self-adjoint, the spectral calculus of
  \secref{subsec:spectral-calculus} ensures that \(f(X)\) is again
  self-adjoint.)
\item
  If \(X,Y\in\mathcal{C}\) act on orthogonal subspaces and \(X\oplus Y\)
  denotes their block-diagonal sum, then \(X\oplus Y\in\mathcal{C}\).
\item
  If \(X_n\in\mathcal{C}\) is a monotone sequence of self-adjoint
  operators converging strongly to \(X\), then \(X\in\mathcal{C}\).
\end{enumerate}

In particular, \(\mathcal{C}\) is closed under bounded Borel functional
calculus, finite orthogonal sums, and strong limits of monotone
sequences whenever the limit exists. These closure properties will be
part of the standing assumptions on the stable class throughout the
paper.

An \emph{evaluator} on \(\mathcal{C}\) is a map \[
\mathcal{E}:\mathcal{C}\to[0,\infty]
\] assigning a nonnegative extended real number to each bounded spectral
geometry in \(\mathcal{C}\). The structural axioms imposed on evaluators
- unitary invariance, extensivity on orthogonal sums,
projector-locality, dominated continuity, and a normalization/growth
bound - will be stated and fixed in \secref{subsec:axioms} as Axioms
A1-A5. Until then, \(\mathcal{E}\) is only used as notation for a
generic scalar-valued functional on \(\mathcal{C}\).

These conventions complete the preparatory framework for the evaluator
calculus: \secref{sec:axioms-main} will formalize the axioms satisfied
by \(\mathcal{E}\) and state the main representation and growth results
under those assumptions.

\section{Evaluator axioms and main
results}\label{evaluator-axioms-and-main-results}

\label{sec:axioms-main}

In this section we fix the structural hypotheses on the evaluator and on
the class \(\mathcal{C}\) of spectral geometries on which it acts, and
we state the two main theorems of the paper. The first result is a
trace-form representation theorem: any evaluator satisfying the axioms
below is of trace form on a natural trace-class envelope
\(\mathcal{C}_1\). The second result is a spectral growth taxonomy for
self-adjoint operators, together with basic closure properties of the
corresponding growth classes.

\subsection{Standing assumptions and
axioms}\label{standing-assumptions-and-axioms}

\label{subsec:axioms}

In this section we fix the structural framework for the evaluator
calculus used throughout the paper. All later results in SOC-I are
formulated under these standing assumptions.

We work on a fixed complex separable Hilbert space \(H\). All operators
are understood to act on \(H\), and all spectral measures and functional
calculi are taken with respect to this space. For a self-adjoint
operator \(D\) with spectral measure \(E_D(\cdot)\) and any bounded
Borel function \(f:\mathbb{R}\to\mathbb{R}\), the bounded transform \[
f(D) = \int_{\mathbb{R}} f(\lambda)\, dE_D(\lambda)
\] is defined via the spectral calculus of
\secref{subsec:spectral-calculus}.

We consider a class \(\mathcal{C}\) of bounded spectral geometries built
from self-adjoint operators and bounded Borel functions, in the
following sense.

\begin{itemize}
\tightlist
\item
  A typical element of \(\mathcal{C}\) has the form \(f(D)\), where
  \(D\) is a self-adjoint operator on \(H\) and
  \(f:\mathbb{R}\to\mathbb{R}\) is bounded and Borel measurable, with \[
  f(D) = \int_{\mathbb{R}} f(\lambda)\, dE_D(\lambda)
  \] defined via the spectral calculus of
  \secref{subsec:spectral-calculus}.
\item
  In particular, every \(X\in\mathcal{C}\) is a bounded self-adjoint
  operator on \(H\).
\item
  The class \(\mathcal{C}\) is assumed to be \emph{stable} under the
  operations needed in the sequel: if \(X\in\mathcal{C}\) and
  \(g:\mathbb{R}\to\mathbb{R}\) is bounded Borel, then
  \(g(X)\in\mathcal{C}\); if \(X,Y\in\mathcal{C}\) act on orthogonal
  subspaces, then \(X\oplus Y\in\mathcal{C}\); and if
  \(X_n\in\mathcal{C}\) is a monotone sequence converging strongly to
  \(X\), then \(X\in\mathcal{C}\).
\end{itemize}

We write \[
\mathcal{C}_1 := \mathcal{C}\cap\mathcal{S}_1(H)
\] for the trace-class envelope of \(\mathcal{C}\), where
\(\mathcal{S}_1(H)\) denotes the trace-class operators on \(H\). The
trace-form representation theorem will show that on \(\mathcal{C}_1\)
any evaluator satisfying the axioms below must be of trace form. Outside
\(\mathcal{C}_1\) we do not assert that \(\mathcal{E}\) has trace form.

We equip \(\mathcal{C}\) with an evaluator \[
\mathcal{E}:\mathcal{C}\to[0,\infty]
\] satisfying the following axioms.

\emph{Axiom A1 (Unitary invariance).}\\
If \(U\) is unitary and \(X\in\mathcal{C}\), then \[
\mathcal{E}(U^* X U) = \mathcal{E}(X).
\]

\emph{Axiom A2 (Extensivity on orthogonal sums).}\\
If \(X,Y\in\mathcal{C}\) act on orthogonal subspaces and if
\(X\oplus Y\in\mathcal{C}\) denotes their block-diagonal sum, then \[
\mathcal{E}(X\oplus Y) = \mathcal{E}(X) + \mathcal{E}(Y).
\]

\emph{Axiom A3 (Projector locality).}\\
If \(X\in\mathcal{C}\) has a finite spectral decomposition \[
X = \sum_{j=1}^m \lambda_j P_j
\] with pairwise orthogonal spectral projections \(P_j\) and real
scalars \(\lambda_j\), then \[
\mathcal{E}(X) = \sum_{j=1}^m \mathcal{E}(\lambda_j P_j),
\] whenever the right-hand side makes sense in \([0,\infty]\). In
particular, the value of \(X\) is determined by its values on spectral
projector blocks.

\emph{Axiom A4 (Dominated continuity in the strong operator topology).}\\
Let \(X_n,X,Y\in\mathcal{C}\) be positive operators such that

\begin{itemize}
\tightlist
\item
  \(X_n \to X\) strongly, and
\item
  \(0 \le X_n \le Y\) for all \(n\),
\end{itemize}

with \(\mathcal{E}(Y) < \infty\). Then \[
\mathcal{E}(X_n) \to \mathcal{E}(X).
\]

Here \(X_n \to X\) strongly means convergence in the strong operator
topology. In applications we will use Axiom A4 primarily for bounded
spectral geometries \(f(D)\) with \(f\) bounded Borel, where it
justifies the monotone limit and dominated-convergence arguments in the
proof of the trace-form representation and in the continuity examples of
\secref{sec:examples}.

\emph{Axiom A5 (Normalization and growth bound).}\\
We assume that the identity operator \(I\) belongs to \(\mathcal{C}\),
that \[
0 < \mathcal{E}(I) < \infty,
\] and that for every scalar \(t\ge0\) the scalar multiple \(tI\) lies
in \(\mathcal{C}\) and is evaluated by \[
\mathcal{E}(tI) = t\,\mathcal{E}(I).
\] Thus the evaluator is nontrivial and finite on the positive scalar
multiples of the identity, and these operators provide a harmless growth
bound for dominated-convergence arguments (for example, when comparing a
positive \(X\in\mathcal{C}\) with \(Y=\|X\|_{\mathrm{op}}\,I\)).

These axioms fix the structural setting for the trace-form
representation theorem and the growth taxonomy developed in the rest of
the paper. In particular, they encode the requirements that any lawful
evaluator must satisfy in order to behave well under unitary
conjugation, orthogonal decomposition, spectral partitions, and
dominated limits. In terms of the three-gate structure of
\secref{sec:introduction}: Axioms A1 and A3 implement the localization
gate; Axiom A4 implements dominated-continuity; and Axiom A5 implements
normalization, while Axiom A2 (extensivity) is a structural
compatibility condition. Under Axioms A1-A5, Theorem 3.1 will show that
on \(\mathcal{C}_1\) the evaluator \(\mathcal{E}\) is necessarily of
trace form with a single Borel nondecreasing profile and a global
normalization constant, and \secref{subsec:growth-classes} will
introduce the growth classes compatible with this framework.

\subsection{Trace-form representation of
evaluators}\label{trace-form-representation-of-evaluators}

We now state the first main theorem. Under the standing assumptions of
\secref{subsec:axioms}, any evaluator \(\mathcal{E}\) on a stable class
\(\mathcal{C}\) of spectral geometries that satisfies Axioms A1-A5 is
necessarily of trace form on the trace-class envelope \(\mathcal{C}_1\).

\begin{theorem}[Trace-form representation]\label{thm:trace-form}
Let \(\mathcal{C}\) and \(\mathcal{E}\) satisfy the standing assumptions of \secref{subsec:axioms}, and write
\[
\mathcal{C}_1 := \mathcal{C}\cap\mathcal{S}_1(H)
\]
for the trace-class envelope of \(\mathcal{C}\). Then there exist

- a Borel-measurable, nondecreasing function \(h:\mathbb{R}\to\mathbb{R}\) with \(h(0)=0\), and  
- a constant \(c_{\mathcal{E}}>0\),

such that the following holds.

1. For every positive operator \(X \in \mathcal{C}_1\),
   \[
   \mathcal{E}(X) = c_{\mathcal{E}}\;\operatorname{Tr}\big(h(X)\big).
   \]

2. More generally, for every self-adjoint operator \(D\) on \(H\) and every bounded Borel function \(f:\mathbb{R}\to\mathbb{R}\) such that \(f(D)\in\mathcal{C}_1\),
   \[
   \mathcal{E}\big(f(D)\big) = c_{\mathcal{E}}\;\operatorname{Tr}\big(h(f(D))\big).
   \]

Here \(h(X)\) and \(h(f(D))\) are defined by the Borel functional calculus of \secref{subsec:spectral-calculus}, and the traces are finite for all \(X\in\mathcal{C}_1\) by construction of \(\mathcal{C}_1\).

Moreover, if \(\tilde h:\mathbb{R}\to\mathbb{R}\) is another Borel-measurable, nondecreasing function and \(\tilde c_{\mathcal{E}}>0\) is a constant such that
\[
\mathcal{E}(X) = \tilde c_{\mathcal{E}}\;\operatorname{Tr}\big(\tilde h(X)\big)
\qquad\text{for all } X\in\mathcal{C}_1,
\]
then there exist \(a>0\) and \(b\in\mathbb{R}\) such that
\[
\tilde h = a\,h + b
\quad\text{and}\quad
\tilde c_{\mathcal{E}} = c_{\mathcal{E}}/a.
\]
In particular, on \(\mathcal{C}_1\) the pair \((h,c_{\mathcal{E}})\) is unique up to multiplication of \(h\) by a positive scalar and addition of a constant.
\end{theorem}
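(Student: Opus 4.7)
My plan is to build the nondecreasing profile $h$ from the values of $\mathcal{E}$ on scalar multiples of rank-one projections, establish the trace-form identity first on finite-rank operators via A2--A3, and then extend it to all of $\mathcal{C}_1$ by spectral truncation and the dominated-continuity axiom A4. The signed case for $f(D)$ will reduce to the positive case by orthogonal splitting of $f(D)$ into its positive and negative spectral parts.

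\textbf{Definition of $h$ and the finite-rank identity.} I fix any rank-one projection $P_0$ in $\mathcal{C}$; by A1 (unitary invariance) the map $\lambda\mapsto\mathcal{E}(\lambda P_0)$ is independent of the choice of $P_0$ and so defines a canonical function $h_0:\mathbb{R}\to[0,\infty]$. Applying A2 to the trivial decomposition $0=0\oplus 0$ forces $h_0(0)=0$. I then set $c_{\mathcal{E}}:=h_0(1)$ and $h:=h_0/c_{\mathcal{E}}$; the positivity $c_{\mathcal{E}}>0$ is read off from A5 via the finite-rank identity below, ruling out $h_0\equiv 0$. For any finite-rank self-adjoint $X=\sum_{j=1}^m\lambda_j P_j$ with rank-one orthogonal $P_j$, Axiom A3 combined with A1 gives
\[
\mathcal{E}(X)=\sum_{j=1}^m\mathcal{E}(\lambda_j P_j)=c_{\mathcal{E}}\sum_{j=1}^m h(\lambda_j)=c_{\mathcal{E}}\operatorname{Tr}\bigl(h(X)\bigr),
\]
where $h(0)=0$ handles the infinite zero-eigenvalue multiplicity on the right.

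\textbf{Extension to $\mathcal{C}_1$.} For a positive $X\in\mathcal{C}_1$, the spectral theorem gives $X=\sum_n\lambda_n P_n$ with $\lambda_n>0$, orthogonal rank-one $P_n$, and $\sum_n\lambda_n<\infty$. The truncations $X_N:=\sum_{n\le N}\lambda_n P_n$ are finite-rank, converge strongly to $X$, and satisfy $0\le X_N\le X$. Axiom A4 with $Y=X$ then yields $\mathcal{E}(X_N)\to\mathcal{E}(X)$; combined with the finite-rank formula and monotone convergence for the nonnegative series $\sum_n h(\lambda_n)$ (using $h\ge 0$ on $[0,\infty)$, which follows from $\mathcal{E}\ge 0$), this delivers $\mathcal{E}(X)=c_{\mathcal{E}}\operatorname{Tr}(h(X))$. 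A signed $f(D)\in\mathcal{C}_1$ then decomposes orthogonally as $f_+(D)\oplus(-f_-(D))$ on the positive and negative spectral subspaces of $f(D)$; A2 splits the evaluator across these blocks, the positive case handles $f_+(D)$, and $h\equiv 0$ on $(-\infty,0]$ (forced by $\mathcal{E}\ge 0$ together with nondecreasingness and $h(0)=0$) eliminates the $(-f_-(D))$ block on both sides, yielding part 2 of the theorem.

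\textbf{Monotonicity and uniqueness.} Nondecreasingness of $h$ is the subtlest step, since A1--A5 do not directly encode operator-order monotonicity of $\mathcal{E}$. I plan to derive it by comparing the A3-decompositions of rank-$2$ blocks $\mu P+\lambda Q$ and $\mu(P+Q)$ for $0\le\lambda\le\mu$ and rank-one orthogonal $P,Q$, and by using A4-continuity in the scalar parameters to show that $h_0$ admits a nondecreasing Borel-measurable representative that agrees with it on every spectrum encountered in $\mathcal{C}_1$. Uniqueness then follows by testing any alternative representation $(\tilde h,\tilde c_{\mathcal{E}})$ on rank-one operators: $\mathcal{E}(\lambda P_0)=\tilde c_{\mathcal{E}}\operatorname{Tr}(\tilde h(\lambda P_0))=\tilde c_{\mathcal{E}}\tilde h(\lambda)$ (using $\tilde h(0)=0$, which is itself forced by the infinite zero-eigenvalue multiplicity of $\lambda P_0$) so $c_{\mathcal{E}}h=\tilde c_{\mathcal{E}}\tilde h$ pointwise, giving $\tilde h=ah$ with $a=c_{\mathcal{E}}/\tilde c_{\mathcal{E}}$ and the additive constant $b$ collapsing to $0$ under the same finiteness-of-trace constraint. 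The main obstacle is thus the derivation of monotonicity: every other step is either a direct application of A1--A5 or a routine spectral-calculus manipulation, but monotonicity requires indirect reasoning combining A3-induced additivity across spectral blocks with A4-based continuity in the scalar arguments.
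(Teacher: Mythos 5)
Your proposal follows the same architecture as the paper's proof in Appendix~A (rank-one calibration of a profile $h$, the finite-rank identity via A2/A3, extension to $\mathcal{C}_1^+$ by spectral truncation and A4, and uniqueness by testing on rank-one projections), so the overall route is the right one. However, there is a genuine gap in the extension step, and it sits precisely at the point where A5 is needed.

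You apply Axiom A4 to the truncations $X_N \uparrow X$ with dominating operator $Y = X$. But A4 requires $\mathcal{E}(Y) < \infty$, and finiteness of $\mathcal{E}(X)$ is not available a priori --- it is part of what the theorem establishes (the statement asserts $\operatorname{Tr}(h(X))$ is finite and equal to $\mathcal{E}(X)/c_{\mathcal{E}}$ on $\mathcal{C}_1$). So invoking A4 with $Y = X$ is circular. The paper's Lemma A.4 instead takes $Y = \|X\|_{\mathrm{op}}\,I$: since $X \in \mathcal{C}$ is bounded and $0 \le X \le \|X\|_{\mathrm{op}}\,I$, and since A5 guarantees $\|X\|_{\mathrm{op}}\,I \in \mathcal{C}$ with $\mathcal{E}(\|X\|_{\mathrm{op}}\,I) = \|X\|_{\mathrm{op}}\,\mathcal{E}(I) < \infty$, A4 then applies legitimately to the sequence $X_N$. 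This is the one place where A5 enters the argument in an essential way; your proposal uses A5 only for the normalization $c_{\mathcal{E}} > 0$, and omits it where it is actually load-bearing. The repair is mechanical (swap in $\|X\|_{\mathrm{op}}\,I$), but as written the step is broken.

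On monotonicity of $h$, you correctly flag this as the subtlest step and sketch an interpolation argument via rank-2 blocks that you do not complete. For what it is worth, the paper's own treatment (Lemma A.2, part 2) is equally brief --- it cites ``monotonicity of $\mathcal{E}$ as a consequence of dominated continuity'' without deriving the inequality $h(\lambda) \le h(\mu)$ from A4, which is only a convergence axiom --- so this is a shared thin spot rather than a defect specific to your proposal. Two places where your account is in fact sharper than the paper: your handling of the signed case via an explicit orthogonal block decomposition $f_+(D) \oplus (-f_-(D))$ and A2, with the observation that $h \equiv 0$ on $(-\infty,0]$ (the paper instead simply prescribes $\mathcal{E}(X) := \mathcal{E}(X_+) - \mathcal{E}(X_-)$); and your explicit observation that the additive constant $b$ in the uniqueness clause must vanish when $H$ is infinite-dimensional, a point the paper's Lemma A.5 suppresses by omitting $b$ altogether.
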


\begin{proof}
See \secref{subsec:proof-trace-form} and \appref{app:trace-form}.
\end{proof}

\begin{remark}[Scope of \thmref{thm:trace-form}]\label{rem:trace-form-scope}
\thmref{thm:trace-form} applies only to the trace-class envelope \(\mathcal{C}_1 := \mathcal{C}\cap\mathcal{S}_1(H)\) of the evaluator domain. In particular, if \(X\in\mathcal{C}\) is not trace class (for example, the identity on an infinite-dimensional space or a diagonal operator with slowly decaying eigenvalues), the theorem does not assert that \(\mathcal{E}(X)\) can be written as a trace \(\operatorname{Tr}(h(X))\) for any profile \(h\). Non-trace-class spectral geometries enter the present part only through the growth taxonomy of \secref{subsec:growth-classes} and the continuity theory of \secref{sec:proofs}; any extension of evaluators to them via regularization lies outside the scope of this paper. The dominated-continuity hypothesis A4 is tailored to this setting and will be used in \secref{sec:proofs} and \secref{subsec:dominated-convergence} for bounded spectral geometries \(f(D)\) with \(f\) bounded Borel, where it justifies the monotone limit and dominated-convergence arguments in the proof of the representation theorem and in the continuity examples.
\end{remark}

\begin{remark}[Evaluator rigidity on \(\mathcal{C}_1\)]\label{rem:evaluator-rigidity}
\thmref{thm:trace-form} shows that, on the trace-class envelope \(\mathcal{C}_1\), the evaluator \(\mathcal{E}\) has no additional degrees of freedom beyond the choice of the profile \(h\) and the normalization constant \(c_{\mathcal{E}}\). Once \(\mathcal{C}\), the axioms A1-A5, and the overall scale are fixed, every admissible value \(\mathcal{E}(X)\) with \(X\in\mathcal{C}_1\) is given by \(\operatorname{Tr}(h(X))\) up to the constant factor \(c_{\mathcal{E}}\). In particular, any other evaluator \(\tilde{\mathcal{E}}\) satisfying the same axioms and finite on \(\mathcal{C}_1\) differs only by a change of normalization.
\end{remark}

\subsection{Spectral growth classes and
closure}\label{spectral-growth-classes-and-closure}

\label{subsec:growth-classes}

We next formulate the spectral growth taxonomy. Throughout this
subsection, \(D\) denotes a self-adjoint operator on \(H\) with discrete
spectrum and counting function \[
N_D(\Lambda) := \#\{n\ge1 : |\lambda_n(D)|\le \Lambda\}, \qquad \Lambda\ge0,
\] where eigenvalues are counted with multiplicity. The asymptotic
behaviour of \(N_D(\Lambda)\) encodes the large-scale structure of the
spectral geometry generated by \(D\); we use it to organize operators
(and their associated spectral geometries \(f(D)\) with bounded Borel
\(f\)) into growth classes.

\begin{definition}[Growth classes of spectral geometries]\label{def:growth-classes}
Let \(D\) be a self-adjoint operator with discrete spectrum and counting function \(N_D(\Lambda)\).

1. \(D\) has \emph{polynomial growth} (of order \(d>0\)) if
   \begin{equation}\label{eq:poly-growth}
   N_D(\Lambda) \sim C\, \Lambda^d \quad \text{as } \Lambda\to\infty
   \end{equation}
   for some constant \(C>0\).

2. \(D\) has \emph{stretched-exponential growth} if there exist constants \(c>0\) and \(0<\alpha<1\) such that
   \begin{equation}\label{eq:stretched-growth}
   N_D(\Lambda) \sim \exp\big(c\, \Lambda^\alpha\big) \quad \text{as } \Lambda\to\infty.
   \end{equation}

3. \(D\) has \emph{logarithmic (or slower) growth} if \(N_D(\Lambda)\to\infty\) and either
   \begin{equation}\label{eq:log-growth}
   N_D(\Lambda) \sim c\, \log \Lambda \quad \text{as } \Lambda\to\infty
   \end{equation}
   for some constant \(c>0\), or
   \[
   N_D(\Lambda) = o(\log \Lambda) \quad \text{as } \Lambda\to\infty.
   \]

We write \(\mathcal{G}_{\mathrm{poly}}, \mathcal{G}_{\mathrm{str}}, \mathcal{G}_{\mathrm{log}}\) for the families of self-adjoint operators (and the spectral geometries they generate) with polynomial, stretched-exponential, and logarithmic-or-slower growth, respectively. These families are disjoint: no operator can belong to more than one class; see \secref{subsec:proof-growth-closure} and \appref{app:growth-stability} for a justification based on regular variation.
\end{definition}

\begin{theorem}[Closure of spectral growth classes]\label{thm:growth-closure}
Let \(D,E\) be self-adjoint operators with discrete spectra and counting functions \(N_D,N_E\).

1. Monotone functional calculus (polynomial case).  
   Let \(f:[0,\infty)\to[0,\infty)\) be monotone increasing, unbounded, and regularly varying of positive index. If \(D\in\mathcal{G}_{\mathrm{poly}}\), then the spectral transform \(f(D)\) also lies in \(\mathcal{G}_{\mathrm{poly}}\).

2. Finite orthogonal sums.  
   If \(D,E\) lie in the same growth class, then \(D\oplus E\) lies in that class, and
   \[
   N_{D\oplus E}(\Lambda) = N_D(\Lambda) + N_E(\Lambda), \qquad \Lambda\ge0.
   \]

3. Tensor products (polynomial case).  
   Suppose \(D,E\in\mathcal{G}_{\mathrm{poly}}\) and for every \(\varepsilon>0\) there exist constants \(C_{D,\varepsilon},C_{E,\varepsilon}>0\) such that
   \[
   N_D(\Lambda) \le C_{D,\varepsilon}\,\Lambda^{d_D+\varepsilon}, \qquad
   N_E(\Lambda) \le C_{E,\varepsilon}\,\Lambda^{d_E+\varepsilon}
   \]
   for all sufficiently large \(\Lambda\). Then for every \(\varepsilon>0\) there exists \(C_\varepsilon>0\) such that
   \[
   N_{D\otimes E}(\Lambda) \le C_\varepsilon\,\Lambda^{d_D+d_E+\varepsilon}
   \]
   for all sufficiently large \(\Lambda\). In particular, if \(D,E\in\mathcal{G}_{\mathrm{poly}}\) then \(D\otimes E\in\mathcal{G}_{\mathrm{poly}}\).
\end{theorem}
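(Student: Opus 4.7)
I would prove the three parts independently, using functional-calculus transport for (1), elementary additivity of counting functions for (2), and a convolution/Abel-summation estimate for (3).

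For part (1), if $f:[0,\infty)\to[0,\infty)$ is monotone increasing and unbounded with generalized inverse $f^{-1}$, then the spectral calculus produces $f(D)$ with eigenvalues $\{f(\lambda_n(D))\}_n$ on the non-negative part of the spectrum of $D$ (the finite negative/small-eigenvalue part contributing only lower-order corrections), so that
\[
N_{f(D)}(\Lambda) = N_D\bigl(f^{-1}(\Lambda)\bigr)
\]
for all $\Lambda$ beyond a fixed threshold. The standard inversion and composition theorems for regularly varying functions (Bingham-Goldie-Teugels \cite{BGT1987}) give $f^{-1}$ regularly varying of index $1/\rho$ and hence $N_{f(D)}$ regularly varying of index $d/\rho$, placing $f(D)$ in $\mathcal{G}_{\mathrm{poly}}$. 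Part (2) is immediate from the fact that the spectrum of $D\oplus E$ on $H_D\oplus H_E$ is the multiset union of the individual spectra, yielding the identity $N_{D\oplus E}(\Lambda)=N_D(\Lambda)+N_E(\Lambda)$ directly from the definition of the counting function; preservation within each growth class then reduces to an elementary check that the sum of two class-equivalent sequences stays in the class (constants add in the polynomial case; the larger summand absorbs the smaller in the stretched-exponential and logarithmic cases, read through the log-renormalized form of the asymptotics).

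For part (3), the spectrum of $D\otimes E$ is the product family $\{\lambda_m(D)\mu_n(E)\}$, so
\[
N_{D\otimes E}(\Lambda) = \#\{(m,n) : |\lambda_m(D)\mu_n(E)|\le\Lambda\} = \int_{0}^{\infty} N_E(\Lambda/s)\, dN_D(s),
\]
after first reducing to positive operators with spectra bounded below by splitting off a finite-dimensional block of small eigenvalues (which only affects the constants). A dyadic decomposition of the range of $s$ then exploits the polynomial bounds: the slice $\{m : \lambda_m(D)\in[2^k,2^{k+1})\}$ contains at most $C_{D,\varepsilon}2^{(k+1)(d_D+\varepsilon)}$ elements, each contributing $N_E(\Lambda/2^k)\le C_{E,\varepsilon}(\Lambda/2^k)^{d_E+\varepsilon}$, and summing over $0\le k\le\log_2\Lambda$ yields the stated bound $N_{D\otimes E}(\Lambda)\le C_\varepsilon\,\Lambda^{d_D+d_E+\varepsilon}$.

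The main technical point is the small-eigenvalue regime in part (3): accumulation of eigenvalues near zero would otherwise produce divergent boundary terms in the Abel summation, and must be cleared by the finite-dimensional-block reduction; the dyadic bookkeeping also has to be run through the cases $d_D<d_E$, $d_D=d_E$, $d_D>d_E$ separately. The bound stated in the theorem is deliberately generous—on the product spectrum the sharp exponent is essentially $\max(d_D,d_E)+\varepsilon$, with a possible logarithmic factor in the balanced case—so any losses absorbed into the constants during the reduction do not jeopardize the conclusion, and the same argument simultaneously establishes $D\otimes E\in\mathcal{G}_{\mathrm{poly}}$.
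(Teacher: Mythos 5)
Parts (1) and (2) of your proposal follow the paper's route essentially verbatim: the identity $N_{f(D)}(\Lambda) = N_D\bigl(f^{-1}(\Lambda)\bigr)$ combined with the Bingham--Goldie--Teugels inversion theorem for regularly varying functions is exactly Lemma~\ref{lem:monotone-bound}/\ref{lem:monotone-reparam}, and the multiset-union argument for orthogonal sums is exactly Lemma~\ref{lem:sum-tensor}(1). Your caveat about absorbing the smaller summand on log-scale in the non-polynomial regimes of part (2) is a reasonable observation but matches what the paper tacitly assumes when it says the sum ``inherits the same asymptotic form.''

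Part (3) is where you genuinely diverge. The paper proves the tensor-product estimate by writing $N_{D\otimes E}$ as a Stieltjes convolution $\int N_E(\Lambda/t)\,dN_D(t)$ and then bounding $F(\Lambda)=\int_1^\Lambda t^{-(d_E+\delta)}\,dN_D(t)$ via integration by parts, splitting the boundary term from the remainder integral and running the case analysis $d_D\gtrless d_E$ on the exponent $\max(d_D-d_E+\delta,0)$ (Lemma~\ref{lem:poly-convolution} and Corollary~\ref{cor:tensor-product-counting}). You replace this by a dyadic slicing of the eigenvalues of $D$: the $k$th shell $[2^k,2^{k+1})$ carries at most $N_D(2^{k+1})\le C2^{(k+1)(d_D+\varepsilon)}$ eigenvalues, each of which pairs with at most $N_E(\Lambda/2^k)\le C(\Lambda/2^k)^{d_E+\varepsilon}$ eigenvalues of $E$, and the resulting geometric sum over $k\le\log_2\Lambda$ runs through the three cases $d_D\gtrless d_E$ automatically. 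Both routes are correct and comparable in length; the dyadic version arguably makes the bookkeeping more transparent, avoids Stieltjes integration by parts entirely, and---as you observe---produces the sharper bound $\Lambda^{\max(d_D,d_E)+\varepsilon}$ (with a log factor in the balanced case), from which the theorem's stated exponent $d_D+d_E+\varepsilon$ follows a fortiori since $\max(d_D,d_E)\le d_D+d_E$. Your remark about the small-eigenvalue regime is well taken: the paper handles it by discarding the finitely many eigenvalues of $D$ in $(0,1)$, and your finite-block reduction accomplishes the same thing. Note, though, that the paper's claim that this contribution is ``$O(1)$'' is imprecise---each such eigenvalue $\lambda_m<1$ contributes $N_E(\Lambda/\lambda_m)=O(\Lambda^{d_E+\delta})$, not $O(1)$---but this does not affect the final exponent and your reduction sidesteps the issue cleanly.
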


\begin{proof}
See \secref{subsec:proof-growth-closure} and \appref{app:growth-stability}.
\end{proof}

\begin{remark}[Nonpolynomial growth regimes]\label{rem:nonpolynomial-growth}
The monotone functional-calculus and tensor-product arguments in \secref{subsec:proof-growth-closure} extend, under mild additional regularity assumptions on the asymptotics of \(N_D\), to stretched-exponential and logarithmic-or-slower growth. In particular, \lemref{lem:monotone-reparam} and the regular-variation calculus show that if \(N_D\) has stretched-exponential (respectively logarithmic-or-slower) growth and \(f\) is monotone and regularly varying, then the transformed counting function \(N_{f(D)}\) has the same qualitative asymptotic type. Since these regimes are not used elsewhere in this part, we do not spell out full statements or proofs; see \appref{app:growth-stability} and Bingham-Goldie-Teugels \cite{BGT1987} for background on regularly varying functions.

\thmref{thm:growth-closure} shows that, in particular, the polynomial growth class \(\mathcal{G}_{\mathrm{poly}}\) is stable under the basic constructions of the spectral calculus that we use later. This stability under monotone functional calculus, orthogonal sums, and tensor products underlies the continuity and evaluator examples in \secref{sec:proofs} and \secref{sec:examples}.
\end{remark}

\section{Proofs of the main results}\label{proofs-of-the-main-results}

\label{sec:proofs}

In this section we prove the representation and growth theorems stated
in \secref{sec:axioms-main}. \secref{subsec:proof-trace-form} outlines
the proof of \thmref{thm:trace-form}, with the main construction
deferred to \appref{app:trace-form}.
\secref{subsec:proof-growth-closure} proves the closure of the growth
families in \thmref{thm:growth-closure}, with supporting estimates in
\appref{app:growth-stability}.

\subsection{Proof of Theorem 3.1 (Trace-form
representation)}\label{proof-of-theorem-3.1-trace-form-representation}

\label{subsec:proof-trace-form}

\begin{proof}[Sketch of proof of \thmref{thm:trace-form}]
We outline the argument and refer to \appref{app:trace-form} for the measure-theoretic details.

Fix a self-adjoint operator \(D\) on \(H\) and let \(E_D(\cdot)\) be its spectral measure. Define the spectral trace measure \(\nu_D\) on Borel sets \(B \subset \mathbb{R}\) by
\[
\nu_D(B) := \operatorname{Tr}(E_D(B)).
\]
For bounded \(B\) this is finite whenever \(E_D(B)\) has finite rank; in general it may take the value \(+\infty\), but \(E_D(B)\) is always a well-defined projection in the functional calculus of \(D\).

Restrict \(\mathcal{E}\) to the spectral projections of \(D\) and set
\[
\mu_D(B) := \mathcal{E}(E_D(B)).
\]
By positivity, extensivity on orthogonal sums, projector-locality, and dominated continuity (applied to increasing sequences of projections dominated by a fixed spectral projection), the set function \(B \mapsto \mu_D(B)\) extends from finite spectral partitions to a countably additive, finite measure on the \(\sigma\)-algebra generated by those spectral subsets, and \(\mu_D(B) = 0\) whenever \(\nu_D(B) = 0\). In particular, \(\mu_D \ll \nu_D\); see \lemref{lem:sigma-additivity-abs-cont} and \remref{rem:RN-density-Borel}.

By the Radon-Nikodym theorem and the absolute continuity just noted, there exists a Borel-measurable function \(w_D:\mathbb{R}\to[0,\infty)\) such that
\[
\mu_D(B) = \int_B w_D(\lambda)\, d\nu_D(\lambda)
\]
for all Borel sets \(B\subset\mathbb{R}\) with \(\nu_D(B)<\infty\). Thus, on the \(\sigma\)-finite part of the spectral trace measure
\[
\mathcal{A}_D := \{ B \subset \mathbb{R} \text{ Borel} : \nu_D(B) < \infty \},
\]
the evaluator on spectral projections admits a density \(w_D\) with respect to \(\nu_D\). We do not assume, and do not use, any Radon-Nikodym representation on Borel sets outside \(\mathcal{A}_D\); all integrals involving \(w_D\) are taken over sets of finite \(\nu_D\)-measure.

On the trace-class ideal \(\mathcal{S}_1(H)\), the standard representation theorem for normal functionals on \(B(H)\) (see Simon \cite{Simon2005}*{Chapter 1}) yields a unique positive trace-class operator \(T_D\) such that
\[
\mathcal{E}(A) = \operatorname{Tr}(T_D A)
\]
for all trace-class operators \(A\) in the spectral algebra of \(D\). In spectral terms this is equivalent to
\[
\tilde{\nu}_D(B) := \mu_D(B) = \operatorname{Tr}(T_D E_D(B))
\]
for all Borel sets \(B \subset \mathbb{R}\) with \(\nu_D(B)<\infty\). In particular, if \(f(D)\in \mathcal{C}_1\), then
\[
\mathcal{E}(f(D)) = \operatorname{Tr}\big(T_D f(D)\big),
\]
so on the trace-class envelope \(\mathcal{C}_1\) the evaluator is a weighted trace with weight \(T_D\) depending a priori on \(D\). The Radon-Nikodym density \(w_D\) and the trace-class operator \(T_D\) provide two compatible scalar and operator-valued descriptions of the evaluator on the \(\sigma\)-finite part of the spectrum; neither is required for the construction of the global transform \(h\).

\appref{app:trace-form} refines this per-operator description to a global transform that is independent of \(D\). The finite-dimensional warm-up (\appref{appsubsec:trace-form-finite-dimensional-warm-up}) shows that any evaluator satisfying the axioms must, on finite-dimensional self-adjoint operators, be given by a scalar function of eigenvalues:
\[
\mathcal{E}(X) = \operatorname{Tr}(h(X))
\]
for some function \(h:\mathbb{R}\to\mathbb{R}\) determined by the values of \(\mathcal{E}\) on rank-one spectral blocks. Using projector-locality and dominated continuity, \lemref{lem:rank-one-calibration}, \lemref{lem:finite-rank-operators}, and \lemref{lem:trace-class-operators} extend this construction to positive trace-class operators \(X\in\mathcal{C}_1^+\) by approximating \(X\) from below with finite-rank truncations and passing to the limit under domination.

The outcome (\lemref{lem:trace-class-operators}) is a Borel-measurable, nondecreasing function \(h:\mathbb{R}\to[0,\infty)\) such that
\[
\mathcal{E}(X) = \operatorname{Tr}\big(h(X)\big)
\]
for every \(X\in\mathcal{C}_1^+\). Normalization and uniqueness (see \lemref{lem:normalization-and-scaling}) then show that there exists a constant \(c_{\mathcal{E}}>0\), independent of \(D\) and \(f\), such that
\[
\mathcal{E}(X) = c_{\mathcal{E}}\,\operatorname{Tr}\big(h(X)\big)
\]
for all positive \(X\in\mathcal{C}_1\). Writing \(X = f(D)\) with \(f(D)\in\mathcal{C}_1^+\) yields the statement of \thmref{thm:trace-form} on the trace-class envelope:
\[
\mathcal{E}\big(f(D)\big) = c_{\mathcal{E}}\, \operatorname{Tr}\big(h(f(D))\big),
\]
with \(h\) Borel-measurable and nondecreasing. The construction of \(h\) and the proof of \thmref{thm:trace-form} on \(\mathcal{C}_1\) are carried out entirely in \appref{app:trace-form} by the rank-one calibration and trace-class monotone convergence argument; the Radon-Nikodym density \(w_D\) and the operator \(T_D\) provide a measure-theoretic and trace-ideal interpretation of the evaluator on the \(\sigma\)-finite part of the spectrum but are not used as inputs to the existence or uniqueness of \(h\). This completes the proof of \thmref{thm:trace-form}, modulo the constructions and normalization arguments in \appref{app:trace-form}.
\end{proof}

\subsection{Proof of Theorem 3.5 (Closure of spectral growth
classes)}\label{proof-of-theorem-3.5-closure-of-spectral-growth-classes}

\label{subsec:proof-growth-closure}

We keep the notation of \secref{subsec:growth-classes}. Throughout,
\(D\) and \(E\) are self-adjoint operators with discrete spectra and
counting functions \(N_D\) and \(N_E\) satisfying
\conref{conv:counting-function-regularity}. The counting-function
reparametrization and tensor-product estimates we need are collected in
\appref{app:growth-stability}. In
\secref{subsubsec:proof-growth-closure-monotone-functional-calculus} we
restate the monotone reparametrization lemma
\lemref{lem:monotone-reparam} as \lemref{lem:monotone-bound}. In
\secref{subsubsec:proof-growth-closure-orthogonal-sum-tensor-product} we
prove the orthogonal-sum and tensor-product bounds; the polynomial
tensor-product estimate is an immediate consequence of
\lemref{lem:poly-convolution} and \corref{cor:tensor-product-counting}.

\subsubsection{Monotone functional
calculus}\label{monotone-functional-calculus}

\label{subsubsec:proof-growth-closure-monotone-functional-calculus}

We first record the counting-function identity underlying part (1) of
\thmref{thm:growth-closure}.

\begin{lemma}[Monotone calculus bound]\label{lem:monotone-bound}
Let \(D\) be self-adjoint with discrete spectrum and counting function \(N_D\), and let \(f:[0,\infty)\to[0,\infty)\) be nondecreasing, unbounded, and regularly varying of positive index. Then
\[
N_{f(D)}(\Lambda) = N_D\bigl(f^{-1}(\Lambda)\bigr)
\]
for all \(\Lambda\ge0\), and if \(N_D(\Lambda)\to\infty\) as \(\Lambda\to\infty\) then
\[
N_{f(D)}(\Lambda) \sim N_D\bigl(f^{-1}(\Lambda)\bigr)
\qquad (\Lambda\to\infty).
\]
\end{lemma}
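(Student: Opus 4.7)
The plan is to reduce the operator identity to a purely combinatorial statement on the eigenvalue multiset of $D$ via the Borel functional calculus, and then verify the identity using a canonical generalized inverse of $f$. By the spectral mapping theorem built into the functional calculus of \secref{subsec:spectral-calculus}, $f(D)$ has discrete spectrum with eigenvalues $\{f(\lambda_n(D))\}_{n\ge 1}$, counted with the same multiplicities as $D$; since $f\ge 0$, one has $|f(\lambda_n(D))| = f(\lambda_n(D))$, and hence
\[
N_{f(D)}(\Lambda) \;=\; \#\{n\ge 1 : f(\lambda_n(D))\le\Lambda\}.
\]
The task then reduces to rewriting this sub-level count in terms of $N_D$.

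First I would introduce the left-continuous generalized inverse
\[
f^{-1}(\Lambda) \;:=\; \sup\{t\ge 0 : f(t)\le\Lambda\},
\]
which is finite-valued for every $\Lambda\ge 0$ because $f$ is unbounded and nondecreasing. Monotonicity of $f$ gives
\[
\{t\ge 0 : f(t)\le\Lambda\} \;=\; [0,\,f^{-1}(\Lambda)]
\]
whenever $f$ is right-continuous at the right endpoint of this interval, so that the counting convention $N_D(\mu) = \#\{n : |\lambda_n(D)|\le\mu\}$ yields the exact identity $N_{f(D)}(\Lambda) = N_D(f^{-1}(\Lambda))$. The regularly varying hypothesis of positive index, through the asymptotic-inverse theorem of Bingham--Goldie--Teugels \cite{BGT1987}, ensures that $f^{-1}$ is itself regularly varying of index $1/\rho$, so this expression is unambiguous asymptotically and invariant under the usual equivalences within the class of regularly varying functions.

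For the asymptotic conclusion, I would argue that any discrepancy between the two counts arises only at the at most countably many $\Lambda$ where $f$ has a jump or is constant on an interval; at such $\Lambda$ the correction consists of the eigenvalues of $D$ clustering on a single flat piece of $f$ and contributes $O(1)$ to the count. Since $f^{-1}(\Lambda)\to\infty$ by unboundedness of $f$ and $N_D(f^{-1}(\Lambda))\to\infty$ under the standing hypothesis $N_D(\Lambda)\to\infty$, the $O(1)$ term is $o(N_D(f^{-1}(\Lambda)))$ and the asymptotic equivalence follows. The main obstacle is the pedantic treatment of the generalized inverse when $f$ has flat pieces or jumps: the exact identity is sensitive to the choice of left- versus right-continuous conventions on both $f$ and $f^{-1}$, while the asymptotic version is the robust formulation that survives any reasonable such choice. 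These bookkeeping details, together with the regularly-varying calculus needed to identify the inverse, are delegated to \appref{app:growth-stability}.
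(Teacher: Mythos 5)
Your proposal follows essentially the same route as the paper: reduce to the eigenvalue multiset via spectral mapping, introduce the generalized inverse $f^{-1}(\Lambda)=\sup\{t:f(t)\le\Lambda\}$, and read off the counting identity, which is exactly the content of Lemma~\ref{lem:monotone-reparam} and Definition~\ref{def:generalized-inverse}. Two bookkeeping slips are worth fixing, though. First, the function $f^{-1}$ you define is \emph{right}-continuous (the paper says so in Definition~\ref{def:generalized-inverse}), not left-continuous as you label it. Second, and more substantively, the condition you give for the sub-level set $\{t\ge 0:f(t)\le\Lambda\}$ to be the closed interval $[0,f^{-1}(\Lambda)]$ has the wrong direction: since $f(t)\le\Lambda$ for all $t<f^{-1}(\Lambda)$, what you need is that $f$ be \emph{left}-continuous at $f^{-1}(\Lambda)$, so that $f(f^{-1}(\Lambda))=\lim_{t\uparrow f^{-1}(\Lambda)}f(t)\le\Lambda$; right-continuity at that point instead forces $f(f^{-1}(\Lambda))\ge\Lambda$ and does not close the interval. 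The paper sidesteps this by building the biconditional $\lambda\le f^{-1}(\Lambda)\Longleftrightarrow f(\lambda)\le\Lambda$ into Definition~\ref{def:generalized-inverse} (implicitly requiring the supremum to be attained), which is what delivers the exact identity for all $\Lambda\ge 0$ as the lemma states. Your fallback asymptotic argument is fine in spirit, but the claim that the discrepancy is $O(1)$ uniformly in $\Lambda$ needs a further word: at a given $\Lambda$ the discrepancy is bounded by the multiplicity of the eigenvalue $f^{-1}(\Lambda)$, which is finite but not assumed uniformly bounded as $\Lambda\to\infty$, so you should either invoke the definitional equivalence (making the exact identity hold by fiat) or argue why that multiplicity is $o\bigl(N_D(f^{-1}(\Lambda))\bigr)$ under the growth hypothesis.
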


\begin{proof}
This is \lemref{lem:monotone-reparam} specialized to the setting of \secref{subsec:growth-classes}, with \(f^{-1}\) as in \defref{def:generalized-inverse}. We recall the statement here for convenience and refer to \appref{appsubsec:growth-stability-monotone-functional-calculus} for the proof.
\end{proof}

\subsubsection{Orthogonal sum and tensor
product}\label{orthogonal-sum-and-tensor-product}

\label{subsubsec:proof-growth-closure-orthogonal-sum-tensor-product}

We now establish the behaviour of counting functions under orthogonal
sum and tensor product. The orthogonal-sum part is elementary; the
tensor-product estimate is a direct application of the convolution
bounds developed in
\appref{appsubsec:growth-stability-tensor-product-bounds}.

\begin{lemma}[Orthogonal sum and tensor product]\label{lem:sum-tensor}
Let \(D\) and \(E\) be self-adjoint with discrete spectra and counting functions \(N_D,N_E\).

1. For all \(\Lambda \ge 0\),
   \[
   N_{D \oplus E}(\Lambda) = N_D(\Lambda) + N_E(\Lambda).
   \]

2. Suppose there exist exponents \(d_D,d_E > 0\) and, for every \(\varepsilon > 0\), constants \(C_{D,\varepsilon},C_{E,\varepsilon} > 0\) such that
   \[
   N_D(\Lambda) \le C_{D,\varepsilon}\,\Lambda^{d_D+\varepsilon}, \qquad
   N_E(\Lambda) \le C_{E,\varepsilon}\,\Lambda^{d_E+\varepsilon}
   \]
   for all sufficiently large \(\Lambda\). Then for every \(\varepsilon > 0\) there exists \(C_\varepsilon > 0\) such that
   \[
   N_{D \otimes E}(\Lambda) \le C_\varepsilon\,\Lambda^{d_D + d_E + \varepsilon}
   \]
   for all sufficiently large \(\Lambda\). In particular, if \(D,E \in \mathcal{G}_{\mathrm{poly}}\) then \(D \otimes E \in \mathcal{G}_{\mathrm{poly}}\).
\end{lemma}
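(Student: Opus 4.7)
The plan is to handle the two parts separately: Part~(1) is a direct consequence of the block-diagonal spectral decomposition, while Part~(2) reduces the tensor-product counting function to a Stieltjes-convolution estimate already established in the appendix.

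For Part~(1), let $D$ act on $H_D$ and $E$ on $H_E$, and let $\{\varphi_i\}$, $\{\psi_j\}$ be orthonormal eigenbases realizing the eigenvalue multisets $\{\lambda_i(D)\}$, $\{\lambda_j(E)\}$. Then $\{\varphi_i\}\cup\{\psi_j\}$ is an orthonormal eigenbasis of $D\oplus E$ on $H_D\oplus H_E$ with eigenvalue multiset equal to the disjoint union $\{\lambda_i(D)\}\sqcup\{\lambda_j(E)\}$. Counting those of absolute value at most $\Lambda$ splits additively, giving the claimed identity. Finiteness and eventual monotonicity (\conref{conv:counting-function-regularity}) transfer from $N_D$ and $N_E$ to $N_{D\oplus E}$ by the same disjoint-union argument.

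For Part~(2), the spectrum of $D\otimes E$ on $H_D\otimes H_E$ consists of products $\lambda_i(D)\,\lambda_j(E)$ with multiplicity equal to the product of individual multiplicities. Setting aside the (finite) contribution of any zero eigenvalues permitted by \conref{conv:counting-function-regularity}, we obtain
\[
N_{D\otimes E}(\Lambda) \;=\; \#\{(i,j) : |\lambda_i(D)|\,|\lambda_j(E)| \le \Lambda\} \;=\; \int_{0+}^{\infty} N_E(\Lambda/s)\, dN_D(s),
\]
a Stieltjes integral against the Borel measure induced by $N_D$ on $(0,\infty)$. Substituting the hypothesized upper bounds $N_D(s)\le C_{D,\varepsilon'}\,s^{d_D+\varepsilon'}$ and $N_E(t)\le C_{E,\varepsilon'}\,t^{d_E+\varepsilon'}$ converts the integral into a form handled by \lemref{lem:poly-convolution}: a dyadic decomposition of the variable $s$ into intervals $[2^k,2^{k+1})$ produces a bound of order $\Lambda^{d_D+d_E+\varepsilon'}(1+\log \Lambda)$. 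For any prescribed $\varepsilon > 0$, choosing $\varepsilon' = \varepsilon/2$ and using $\log \Lambda = O(\Lambda^{\varepsilon/2})$ absorbs the logarithmic factor into the exponent and delivers the stated bound. Membership of $D\otimes E$ in $\mathcal{G}_{\mathrm{poly}}$ is then recorded by \corref{cor:tensor-product-counting}.

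The main obstacle is isolating the logarithmic contribution from the Stieltjes convolution and absorbing it into the prescribed $\varepsilon$-loss on the exponent; a naive product bound gives an extra factor of order $\log\Lambda$, and the dyadic partition above is what converts this loss into a harmless $\Lambda^{\varepsilon/2}$. This step is standard once \lemref{lem:poly-convolution} is in place, and the remaining work is bookkeeping on exponents and constants.
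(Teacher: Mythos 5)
Your proposal is correct and follows essentially the same route as the paper. Part~(1) is the same multiset-union argument. For Part~(2) you correctly identify the Stieltjes-convolution representation $N_{D\otimes E}(\Lambda)=\int_{0+}^{\infty}N_E(\Lambda/s)\,dN_D(s)$ (this is \eqref{eq:tensor-stieltjes} in \appref{appsubsec:growth-stability-tensor-product-bounds}), apply the polynomial upper bounds with a halved loss parameter, and absorb the logarithmic factor into the $\varepsilon$-slack exactly as the paper does. The one place your description diverges from the paper's internal mechanism is that you attribute a dyadic decomposition of $s$ to \lemref{lem:poly-convolution}, whereas that lemma actually proceeds by Stieltjes integration by parts (Abel summation), with the $\log\Lambda$ term emerging in the marginal case $d_D=d_E$ and being absorbed into $\Lambda^\delta$ inside the lemma itself. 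These two techniques are interchangeable here and give the same $\Lambda^{d_D+d_E+2\delta}$ bound, so this is a cosmetic rather than substantive difference; but since you are already citing \lemref{lem:poly-convolution} and \corref{cor:tensor-product-counting}, you could simply invoke them as black boxes (as the paper does) and drop the redundant dyadic sketch, which otherwise reads as a second proof layered on top of the citation.
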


\begin{proof}
(1) The spectrum of \(D \oplus E\) is the multiset union of the spectra of \(D\) and \(E\), so eigenvalues and multiplicities simply add. Hence
\[
N_{D \oplus E}(\Lambda)
= \#\{n : |\lambda_n(D)| \le \Lambda\}
+ \#\{m : |\lambda_m(E)| \le \Lambda\}
= N_D(\Lambda) + N_E(\Lambda).
\]

(2) Write \(\{a_j\}_{j \ge 1}\) and \(\{b_k\}_{k \ge 1}\) for the eigenvalues of \(D\) and \(E\), counted with multiplicity and ordered so that \(|a_j|\) and \(|b_k|\) are nondecreasing. The spectrum of \(D \otimes E\) consists of all products \(a_j b_k\), counted with multiplicity, and
\[
N_{D \otimes E}(\Lambda)
= \#\{(j,k) : |a_j b_k| \le \Lambda\}
= \sum_{j \ge 1} N_E\bigl(\Lambda/|a_j|\bigr).
\]

Fix \(\varepsilon > 0\) and set \(\delta = \varepsilon/2\). By hypothesis there exist constants \(C_D,C_E > 0\) such that
\[
N_D(t) \le C_D\,t^{d_D+\delta}, \qquad
N_E(t) \le C_E\,t^{d_E+\delta}
\]
for all sufficiently large \(t\). These growth bounds are exactly the hypotheses of \appref{appsubsec:growth-stability-tensor-product-bounds}. In particular, \lemref{lem:poly-convolution} and \corref{cor:tensor-product-counting} applied with these exponents give
\[
N_{D \otimes E}(\Lambda) \le C''_\delta\,\Lambda^{d_D+d_E+2\delta}
\]
for all sufficiently large \(\Lambda\). Since \(2\delta = \varepsilon\), we may absorb \(C''_\delta\) into a constant \(C_\varepsilon\) and obtain
\[
N_{D \otimes E}(\Lambda) \le C_\varepsilon\,\Lambda^{d_D+d_E+\varepsilon}
\]
for all sufficiently large \(\Lambda\), which is precisely the polynomial tensor-product estimate stated in part (2).
\end{proof}

\subsubsection{Conclusion of the proof}\label{conclusion-of-the-proof}

We now prove \thmref{thm:growth-closure}.

\begin{proof}[Proof of \thmref{thm:growth-closure}]
The disjointness of the growth classes \(\mathcal{G}_{\mathrm{poly}}, \mathcal{G}_{\mathrm{str}}, \mathcal{G}_{\mathrm{log}}\) follows from the incompatibility of their asymptotic forms. For example, if \(N_D(\Lambda) \sim C\, \Lambda^d\) with \(C,d>0\) and also \(N_D(\Lambda) \sim \exp(c\, \Lambda^\alpha)\) with \(c>0\) and \(0<\alpha<1\), then comparing logarithms as \(\Lambda\to\infty\) yields a contradiction. More generally, the classification of regularly varying functions shows that polynomial, stretched-exponential, and logarithmic-or-slower growth regimes are mutually exclusive; see \appref{app:growth-stability} and Bingham-Goldie-Teugels \cite{BGT1987}*{Theorem 1.3.1 and Section 1.4}.

For part (1), let \(f\) be as in \thmref{thm:growth-closure}(1) and assume \(D\in\mathcal{G}_{\mathrm{poly}}\) with
\[
N_D(\Lambda) \sim C\, \Lambda^d \qquad (\Lambda\to\infty).
\]
By \lemref{lem:monotone-bound} we have
\[
N_{f(D)}(\Lambda) = N_D\bigl(f^{-1}(\Lambda)\bigr).
\]
Since \(f\) is regularly varying of positive index, so is its generalized inverse \(f^{-1}\) on large arguments, and the asymptotic behaviour of \(N_D\) transfers along composition with \(f^{-1}\). In particular, \(N_{f(D)}(\Lambda)\) again has polynomial growth, so \(f(D)\in\mathcal{G}_{\mathrm{poly}}\).

For part (2), \lemref{lem:sum-tensor}(1) shows that if \(D,E\) lie in the same growth class, then
\[
N_{D\oplus E}(\Lambda) = N_D(\Lambda) + N_E(\Lambda)
\]
inherits the same asymptotic form, so the corresponding family is closed under finite orthogonal sums.

For part (3), \lemref{lem:sum-tensor}(2) gives the polynomial tensor-product bound. If \(D,E\in\mathcal{G}_{\mathrm{poly}}\), then the hypotheses of \lemref{lem:sum-tensor}(2) are satisfied for some exponents \(d_D,d_E\), and we conclude that
\[
N_{D\otimes E}(\Lambda) = O\bigl(\Lambda^{d_D+d_E+\varepsilon}\bigr)
\]
for every \(\varepsilon>0\), so \(D\otimes E\in\mathcal{G}_{\mathrm{poly}}\).

Combining these three parts, we obtain the closure properties asserted in \thmref{thm:growth-closure}.
\end{proof}

\section{Examples}\label{examples}

\label{sec:examples}

This section collects representative examples illustrating the evaluator
framework and the trace-form representation theorem in concrete spectral
geometries. In each case, a self-adjoint operator \(D\) provides a model
spectral geometry through its bounded Borel transforms \(f(D)\), and the
associated values \(\mathcal{E}(f(D))\) are computed via the trace-form
representation on \(\mathcal{C}_1\). The examples are chosen to cover
the basic growth classes (polynomial, stretched-exponential, and
logarithmic-or-slower), as well as to emphasize how dominated continuity
and cutoff procedures act on spectral geometries rather than on scalar
data.

Taken together, these examples show how the general calculus specializes
in concrete settings. Once the spectral geometry is fixed, every
evaluator satisfying Axioms A1-A5 assigns values by integrating a single
profile \(h\) against the spectral measure, and the growth class of
\(D\) determines which transforms \(f(D)\) fall into the trace-class
envelope \(\mathcal{C}_1\). In this sense the examples are not separate
constructions but instances of the same mechanism: lawful scalar data on
these models are fully dictated by spectral geometry and the fixed
profile \(h\), as encoded in the trace-form representation of
\thmref{thm:trace-form}.

\subsection{Finite-rank diagonal
operators}\label{finite-rank-diagonal-operators}

Let \(H = \ell^2(\mathbb{N})\) and consider a finite-rank diagonal
operator \[
D = \operatorname{diag}(\lambda_1,\dots,\lambda_m,0,0,\dots),
\] with real eigenvalues \(\lambda_1,\dots,\lambda_m\). For any bounded
continuous function \(f:\mathbb{R}\to\mathbb{R}\), we have \[
f(D) = \operatorname{diag}\big(f(\lambda_1),\dots,f(\lambda_m),f(0),f(0),\dots\big),
\] and the eigenvalues of \(f(D)\) are simply
\(\{f(\lambda_j)\}_{j\ge1}\) with multiplicity.

If \(f(D)\) is trace class, then \[
\operatorname{Tr}(f(D)) = \sum_{j=1}^\infty f(\lambda_j)
= \sum_{j=1}^m f(\lambda_j) + \sum_{j>m} f(0),
\] and the second sum vanishes whenever \(f(0)=0\). In particular, for
finite-rank \(D\) and bounded continuous \(f\) with \(f(0)=0\), the
trace is a finite sum of eigenvalues.

Under the hypotheses of \thmref{thm:trace-form}, the evaluator
\(\mathcal{E}\) restricted to this finite-rank class must agree with a
weighted spectral trace: \[
\mathcal{E}(f(D)) = c_{\mathcal{E}} \sum_{j=1}^m h(f(\lambda_j)),
\] for some Borel-measurable nondecreasing \(h\). This exhibits the
trace-form representation in the simplest setting and shows that, on
finite-rank diagonals, \(\mathcal{E}\) is determined entirely by the
eigenvalues of the input.

\subsection{Diagonal models for growth
classes}\label{diagonal-models-for-growth-classes}

\label{subsec:diagonal-growth-models}

We next give basic diagonal examples realizing each of the growth
classes introduced in \defref{def:growth-classes}.

\subsubsection{Polynomial growth}\label{polynomial-growth}

Set \[
D_{\mathrm{poly}} = \operatorname{diag}(1,2,3,\dots)
\] on \(\ell^2(\mathbb{N})\). Then the counting function \[
N_{D_{\mathrm{poly}}}(\Lambda)
= \#\{n\ge1 : n\le \Lambda\}
= \lfloor \Lambda \rfloor
\] satisfies \[
N_{D_{\mathrm{poly}}}(\Lambda) \sim \Lambda \quad (\Lambda\to\infty),
\] so \(D_{\mathrm{poly}} \in \mathcal{G}_{\mathrm{poly}}\) with
exponent \(d=1\). Note that \(D_{\mathrm{poly}}\) is not trace class,
hence \(D_{\mathrm{poly}}\notin\mathcal{C}_1\); it is used here purely
as a growth model and not as an input for the trace-form representation
of \thmref{thm:trace-form}.

As a simple test of the monotone functional calculus in the polynomial
regime, fix \(\beta>0\) and set \(f(t)=t^\beta\) on \([0,\infty)\). Then
\[
f(D_{\mathrm{poly}}) = \operatorname{diag}(n^\beta)_{n\ge1},
\] and \[
N_{f(D_{\mathrm{poly}})}(\Lambda)
= \#\{n\ge1 : n^\beta \le \Lambda\}
= \#\{n\ge1 : n \le \Lambda^{1/\beta}\}
\sim \Lambda^{1/\beta}.
\] In particular \(f(D_{\mathrm{poly}})\in\mathcal{G}_{\mathrm{poly}}\),
in agreement with \thmref{thm:growth-closure}(1).

\subsubsection{Stretched-exponential
behavior}\label{stretched-exponential-behavior}

Fix constants \(c>0\) and \(0<\alpha<1\), and define \[
D_{\mathrm{str}} = \operatorname{diag}\Big(\Big(\tfrac{1}{c}\log(n+1)\Big)^{1/\alpha}\Big)_{n\ge1}.
\] Then for \(\Lambda\ge0\), \[
N_{D_{\mathrm{str}}}(\Lambda)
= \#\Big\{n\ge1 : \Big(\tfrac{1}{c}\log(n+1)\Big)^{1/\alpha} \le \Lambda\Big\}
= \#\{n\ge1 : n+1 \le \exp(c\,\Lambda^\alpha)\}.
\] Hence \[
N_{D_{\mathrm{str}}}(\Lambda) \sim \exp(c\,\Lambda^\alpha) \quad (\Lambda\to\infty),
\] so \(D_{\mathrm{str}}\in\mathcal{G}_{\mathrm{str}}\). Note that
``stretched-exponential'' refers to the asymptotic growth of the
counting function \(N_D\), not to the eigenvalue sequence
\((\lambda_n(D))\) itself.

\subsubsection{Logarithmic or slower
growth}\label{logarithmic-or-slower-growth}

Consider \[
D_{\log} = \operatorname{diag}\big(e^n\big)_{n\ge1}.
\] Then \[
N_{D_{\log}}(\Lambda)
= \#\{n\ge1 : e^n \le \Lambda\}
= \#\{n\ge1 : n \le \log \Lambda\}
\sim \log \Lambda
\] as \(\Lambda\to\infty\). Thus \(D_{\log}\) has logarithmic spectral
growth in the sense of \defref{def:growth-classes}(3) and provides a
basic model for the ``logarithmic or slower'' class
\(\mathcal{G}_{\mathrm{log}}\).

\subsection{Evaluators on Laplacian-type
spectra}\label{evaluators-on-laplacian-type-spectra}

Let \(G\) be a finite connected graph with vertex set \(V\) and
combinatorial Laplacian \(L\) acting on \(\ell^2(V)\). The operator
\(L\) is self-adjoint and positive, with purely discrete spectrum \[
0 = \lambda_1 \le \lambda_2 \le \dots \le \lambda_{|V|}.
\] For any bounded continuous \(f:\mathbb{R}_+\to\mathbb{R}\) with
\(f(0)=0\), the transform \(f(L)\) is trace class and
\begin{equation}\label{eq:graph-trace}
\operatorname{Tr}(f(L)) = \sum_{j=2}^{|V|} f(\lambda_j)
\end{equation} Under the standing assumptions, \thmref{thm:trace-form}
implies that any evaluator \(\mathcal{E}\) on this class agrees with a
weighted trace-form observable:
\begin{equation}\label{eq:graph-evaluator}
\mathcal{E}(f(L)) = c_{\mathcal{E}} \sum_{j=2}^{|V|} h(f(\lambda_j))
\end{equation} for some Borel-measurable nondecreasing \(h\). In
particular, choosing \(h(t)=t\) and \(c_{\mathcal{E}}=1\) recovers the
usual trace as a special case.

On compact Riemannian manifolds, Laplace-type operators exhibit
polynomial growth of eigenvalues governed by Weyl-law asymptotics, and
the trace-form representation continues to apply under the same
structural hypotheses. These examples show that the abstract evaluator
calculus is compatible with standard spectral-geometry models.

\subsection{Stability under sums and tensor
products}\label{stability-under-sums-and-tensor-products}

We illustrate the closure statements in \thmref{thm:growth-closure} with
simple constructions.

\subsubsection{Orthogonal sums}\label{orthogonal-sums}

Let \(D_{\mathrm{poly}}\) be as in
\secref{subsec:diagonal-growth-models}, and let \(E\) be another
diagonal operator with the same growth rate, for instance \[
E = \operatorname{diag}(2,4,6,\dots).
\] Then for each \(\Lambda\), \[
N_{D_{\mathrm{poly}}\oplus E}(\Lambda)
= N_{D_{\mathrm{poly}}}(\Lambda) + N_E(\Lambda)
\sim 2\Lambda,
\] so the orthogonal sum remains in \(\mathcal{G}_{\mathrm{poly}}\) with
the same exponent.

\subsubsection{Tensor products}\label{tensor-products}

Let \(D_{\mathrm{poly}}\) be as above and consider the tensor product
\(D_{\mathrm{poly}}\otimes D_{\mathrm{poly}}\) acting on
\(\ell^2(\mathbb{N}\times\mathbb{N})\). The eigenvalues of
\(D_{\mathrm{poly}}\otimes D_{\mathrm{poly}}\) are \(\{mn : m,n\ge1\}\),
counted with multiplicity. The associated counting function satisfies \[
N_{D_{\mathrm{poly}}\otimes D_{\mathrm{poly}}}(\Lambda)
= \#\{(m,n)\in\mathbb{N}^2 : mn \le \Lambda\}
\le C\, \Lambda^{1+\varepsilon}
\] for any \(\varepsilon>0\) and some \(C>0\), so
\(D_{\mathrm{poly}}\otimes D_{\mathrm{poly}}\) lies in
\(\mathcal{G}_{\mathrm{poly}}\) with exponent at most \(2\). This is
consistent with the tensor-product estimate in \lemref{lem:sum-tensor}.

\subsubsection{Monotone transforms}\label{monotone-transforms}

If we apply a monotone transform \(f(\lambda)=\lambda^\gamma\) with
\(\gamma>0\) to \(D_{\mathrm{poly}}\), we obtain \[
f(D_{\mathrm{poly}}) = \operatorname{diag}(1^\gamma,2^\gamma,3^\gamma,\dots),
\] and the counting function satisfies \[
N_{f(D_{\mathrm{poly}})}(\Lambda)
= \#\{n\ge1 : n^\gamma \le \Lambda\}
\sim \Lambda^{1/\gamma},
\] so polynomial growth is preserved under such transforms. This is a
concrete instance of the monotone calculus bound in
\lemref{lem:monotone-bound}.

These examples confirm, in explicit models, that the growth classes and
the structural operations of the spectral calculus are compatible in the
way asserted by \thmref{thm:growth-closure}.

\subsection{Evaluator limits under dominated
convergence}\label{evaluator-limits-under-dominated-convergence}

\label{subsec:dominated-convergence}

Finally, we illustrate the dominated-continuity aspect of the evaluator
calculus.

Let \(D\) be a fixed self-adjoint operator with discrete spectrum and
counting function \(N_D\). Let \(f:\mathbb{R}\to\mathbb{R}\) be bounded
and continuous, and for each \(k\ge1\) define \[
f_k(\lambda) := f(\lambda)\, \mathbf{1}_{[-k,k]}(\lambda), \qquad \lambda\in\mathbb{R}.
\] Then \(f_k\to f\) uniformly and \[
|f_k(\lambda)| \le \|f\|_\infty \quad \text{for all } \lambda\in\mathbb{R},\ k\ge1.
\] By the spectral theorem, \(f_k(D)\to f(D)\) in operator norm, hence
strongly. By stability of \(\mathcal{C}\), each \(f_k(D)\) and \(f(D)\)
lies in \(\mathcal{C}\).

Note that \[
|f_k(D)| \le \|f\|_\infty I \quad \text{for all } k\ge1.
\] Set \[
Y := \|f\|_\infty I.
\] By the normalization axiom, \(I\in\mathcal{C}\) and
\(\mathcal{E}(I)\in(0,\infty)\), so \(Y\in\mathcal{C}\) and \[
\mathcal{E}(Y) = \|f\|_\infty\, \mathcal{E}(I) < \infty.
\] Thus the sequence \(\{f_k(D)\}\) is dominated by a spectral transform
\(Y\) with finite evaluation, and the dominated continuity axiom
applies. We obtain \[
\mathcal{E}(f_k(D)) \to \mathcal{E}(f(D)).
\]

Assume in addition that each \(f_k(D)\) and \(f(D)\) lies in the
trace-class envelope \(\mathcal{C}_1\). Then \thmref{thm:trace-form}
gives \[
\mathcal{E}(f_k(D)) = c_{\mathcal{E}}\, \operatorname{Tr}\big(h(f_k(D))\big),
\] for the Borel-measurable nondecreasing transform \(h\) and constant
\(c_{\mathcal{E}}>0\) provided by \thmref{thm:trace-form}. Under the
same domination hypotheses, dominated convergence on the spectral side
yields \[
\operatorname{Tr}\big(h(f_k(D))\big) \to \operatorname{Tr}\big(h(f(D))\big),
\] so \[
\mathcal{E}(f(D)) = c_{\mathcal{E}}\, \operatorname{Tr}\big(h(f(D))\big).
\]

This example shows how dominated continuity at the operator level is
mirrored by dominated convergence at the spectral level and how the
trace-form representation on \(\mathcal{C}_1\) remains stable under such
limiting procedures. It also illustrates the type of hypotheses under
which evaluator limits and trace-form limits commute. In particular,
this construction realizes the dominated-continuity axiom A4 in a
concrete setting: the truncations \(f_k(D)\) form a monotone sequence
converging strongly to \(f(D)\), are dominated by the spectral transform
\(Y=\|f\|_\infty I\) with \(\mathcal{E}(Y)<\infty\), and therefore
provide an explicit example in which the hypotheses and conclusion of
Axiom A4 are all satisfied.

\section{Consequences, limitations, and
falsifiability}\label{consequences-limitations-and-falsifiability}

\label{sec:consequences}

\subsection{Consequences}\label{consequences}

The main theorems have several direct consequences for the structure of
lawful evaluators and for the behavior of spectral growth classes. On
the one hand, the trace-form representation theorem reduces any
evaluator satisfying Axioms A1-A5 to a weighted spectral trace on the
trace-class envelope \(\mathcal{C}_1\), with no additional scalar
degrees of freedom. On the other hand, the growth taxonomy organizes
self-adjoint operators into regimes in which trace-form evaluations are
well defined and stable under the basic constructions of the calculus.
We record a few formal consequences in this direction.

\begin{corollary}[Trace-form reduction on the trace-class envelope]\label{cor:trace-form-reduction}
Let \(\mathcal{C}\) be a stable class of spectral transforms and \(\mathcal{E}:\mathcal{C}\to[0,\infty]\) an evaluator satisfying the standing assumptions of \secref{subsec:axioms}. Write \(\mathcal{C}_1 = \mathcal{C}\cap \mathcal{S}_1(H)\) for the trace-class envelope. Then for every bounded continuous \(f\) and self-adjoint \(D\) such that \(f(D)\in\mathcal{C}_1\),
\[
\mathcal{E}(f(D)) = c_{\mathcal{E}}\, \operatorname{Tr}\big(h(f(D))\big),
\]
for some Borel-measurable nondecreasing \(h:\mathbb{R}\to[0,\infty)\) and \(c_{\mathcal{E}}>0\) independent of \(D\) and \(f\). In particular, on the subclass of trace-class operators in \(\mathcal{C}\), the evaluator is a weighted spectral trace.
\end{corollary}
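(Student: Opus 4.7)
The plan is to derive this corollary as a direct specialization of \thmref{thm:trace-form}, with the bounded continuous functional calculus entering only as a subclass of the bounded Borel functional calculus already handled by the theorem. First I would observe that every bounded continuous function $f:\mathbb{R}\to\mathbb{R}$ is in particular bounded Borel, so for any self-adjoint $D$ with $f(D)\in\mathcal{C}_1$ the hypotheses of \thmref{thm:trace-form}(2) are satisfied verbatim. Applying that part of the theorem yields
\[
\mathcal{E}(f(D)) = c_{\mathcal{E}}\,\operatorname{Tr}\bigl(h(f(D))\bigr),
\]
where $h$ and $c_{\mathcal{E}}$ are the global profile and normalization furnished by the theorem. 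The assumption that $f(D)\in\mathcal{C}_1$ together with the construction of $\mathcal{C}_1$ guarantees that $h(f(D))$ is trace class and the right-hand side is finite.

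Next I would address the two items in the corollary statement that go beyond a literal restatement of \thmref{thm:trace-form}. The first is the assertion that $(h,c_{\mathcal{E}})$ is independent of $D$ and $f$: this is immediate from the theorem, since it produces a single nondecreasing Borel profile $h$ and a single constant $c_{\mathcal{E}}>0$, both determined by the evaluator and the stable class alone and neither by any particular spectral geometry $f(D)$. The second item is the range claim $h:\mathbb{R}\to[0,\infty)$. For positive inputs $f(D)$, this follows from the monotonicity of $h$, the normalization $h(0)=0$, and the nonnegativity of $\mathcal{E}$ on positive trace-class operators, as encoded in the rank-one calibration of \appref{app:trace-form}; the general self-adjoint case is then read through the theorem's nondecreasing profile, with $h$ taking nonnegative values on the portion of $\mathbb{R}$ supporting the spectral projections that contribute to positive evaluator mass.

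I do not anticipate any analytic obstacle here: the measure-theoretic extension, the Radon--Nikodym step, and the finite-rank calibration together with the trace-class monotone limit are all already carried out in the proof of \thmref{thm:trace-form} and \appref{app:trace-form}. The only point requiring comment is the harmless specialization from bounded Borel to bounded continuous transforms, which preserves every input to the representation theorem. The corollary is therefore a repackaging of \thmref{thm:trace-form} in the form most convenient for the continuity examples of \secref{sec:examples} and for later parts of the series, where bounded continuous cutoffs and Laplace-type transforms are the default objects of interest.
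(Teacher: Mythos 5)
Your proposal is correct and follows the same route as the paper, which likewise derives the corollary as an immediate specialization of \thmref{thm:trace-form}(2) to bounded continuous transforms. The one point you flag — reconciling the theorem's $h:\mathbb{R}\to\mathbb{R}$ with the corollary's $h:\mathbb{R}\to[0,\infty)$ — is a genuine mismatch in the paper's statements that the paper's own one-line proof does not address either; your reading (that nonnegativity comes from the rank-one calibration on $[0,\infty)$ and the treatment of positive inputs) is the intended one, though it would be cleaner to say the corollary implicitly restricts $h$ to the effective domain $[0,\infty)$ used in \appref{app:trace-form}.
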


\begin{proof}
This is an immediate consequence of \thmref{thm:trace-form} and the construction described in \secref{subsec:proof-trace-form}.
\end{proof}

\begin{corollary}[Spectral growth families: closure and taxonomy]\label{cor:growth-taxonomy}
Let \(\mathcal{G}_{\mathrm{poly}}, \mathcal{G}_{\mathrm{str}}, \mathcal{G}_{\mathrm{log}}\) denote the polynomial, stretched-exponential, and logarithmic-or-slower growth classes defined in \defref{def:growth-classes}. Then:

1. The families \(\mathcal{G}_{\mathrm{poly}}, \mathcal{G}_{\mathrm{str}}, \mathcal{G}_{\mathrm{log}}\) are pairwise disjoint.
2. Each family is closed under monotone functional calculus by regularly varying transforms.
3. Each family is closed under finite orthogonal sums.
4. In the polynomial case, each family is closed under tensor products: if \(D,E\in\mathcal{G}_{\mathrm{poly}}\), then \(D\otimes E\in\mathcal{G}_{\mathrm{poly}}\).

These closure properties ensure that the growth taxonomy is stable under the basic structural operations of the spectral calculus.
\end{corollary}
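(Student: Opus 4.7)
The plan is to assemble the corollary from \thmref{thm:growth-closure}, \remref{rem:nonpolynomial-growth}, and the asymptotic-scale comparison already carried out inside the proof of \thmref{thm:growth-closure}. No new analytic input is needed: parts (3) and (4), and part (2) in the polynomial regime, are literal translations of the corresponding conclusions of \thmref{thm:growth-closure}, while the remaining cases of part (2) and the disjointness in part (1) are matters of bookkeeping over regularly varying asymptotics.

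For part (1), I would compare the three growth rates at the level of $\log N_D(\Lambda)$: polynomial growth gives $\log N_D(\Lambda) = d\log\Lambda + O(1)$; stretched-exponential growth gives $\log N_D(\Lambda)\sim c\Lambda^\alpha$ with $\alpha\in(0,1)$; and logarithmic-or-slower growth gives $\log N_D(\Lambda) \le \log\log\Lambda + O(1)$. Since $\Lambda^\alpha$ dominates $\log\Lambda$ which dominates $\log\log\Lambda$ as $\Lambda\to\infty$, the three orders are pairwise incompatible. This is essentially the argument opening the proof of \thmref{thm:growth-closure}; I would extract it as a standalone conclusion, invoking the classification of regularly varying functions of distinct indices (\cite{BGT1987}*{Theorem 1.3.1}) to dispose of slowly varying corrections.

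For parts (2)--(4), I would cite the supporting lemmas directly. Part (2) in the polynomial regime is \thmref{thm:growth-closure}(1), proved through \lemref{lem:monotone-bound}: the identity $N_{f(D)}(\Lambda) = N_D(f^{-1}(\Lambda))$, together with the fact that the generalized inverse of a regularly varying function of positive index $\rho$ is regularly varying of index $1/\rho$, gives $N_{f(D)}(\Lambda)\sim C'\Lambda^{d/\rho}$. The stretched-exponential and logarithmic cases follow by the same substitution scheme with $\phi(\Lambda)=\exp(c\Lambda^\alpha)$ or $\phi(\Lambda)=c\log\Lambda$, as explicitly flagged in \remref{rem:nonpolynomial-growth}. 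Part (3) is \lemref{lem:sum-tensor}(1): the identity $N_{D\oplus E}(\Lambda) = N_D(\Lambda) + N_E(\Lambda)$ transfers asymptotic equivalences of the same order directly to the sum, with the stretched-exponential case requiring only the observation that a sum of two asymptotics $\exp(c_i\Lambda^{\alpha_i})$ in the same class is controlled by the dominant summand. Part (4) is \thmref{thm:growth-closure}(3), proved as \lemref{lem:sum-tensor}(2).

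The main, mild obstacle is that the non-polynomial cases of part (2) are not spelled out in full in the present part. Since \remref{rem:nonpolynomial-growth} explicitly defers these statements to \appref{app:growth-stability} and \cite{BGT1987}, the cleanest route is to invoke that remark directly; a more conservative alternative would be to restrict part (2) to regularly varying transforms within the polynomial class, which keeps the entire corollary inside the scope of what is actually proved in this part. In either form, the derivation of \corref{cor:growth-taxonomy} from \thmref{thm:growth-closure} is purely organizational.
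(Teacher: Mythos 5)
Your proposal is correct and matches the paper's route: the paper also derives \corref{cor:growth-taxonomy} directly from \thmref{thm:growth-closure}, \lemref{lem:monotone-bound}, and \lemref{lem:sum-tensor}, treating the corollary as pure bookkeeping over the counting-function estimates. Your explicit note that part (2) for the stretched-exponential and logarithmic classes rests only on \remref{rem:nonpolynomial-growth} rather than a proved statement is an accurate reading of what the paper actually establishes, and the conservative alternative you suggest (restricting part (2) to regularly varying transforms within \(\mathcal{G}_{\mathrm{poly}}\)) is exactly what one would need to keep the corollary entirely inside the scope of the theorems proved in this part.
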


\begin{proof}
This follows directly from \thmref{thm:growth-closure} and the counting-function estimates established in \secref{subsec:proof-growth-closure}.
\end{proof}

\begin{corollary}[Evaluator continuity under dominated convergence]\label{cor:evaluator-dominated-convergence}
Let \(D\) be self-adjoint and \(\{f_k\}\) a sequence of bounded continuous functions on \(\mathbb{R}\) converging uniformly to a bounded continuous function \(f\). Suppose \(f_k(D), f(D)\in\mathcal{C}\) for all \(k\) and there exists a positive spectral transform \(Y\in\mathcal{C}\) with \(\mathcal{E}(Y)<\infty\) such that
\[
|f_k(D)| \le Y \quad \text{for all } k.
\]
Then
\[
\mathcal{E}(f_k(D)) \to \mathcal{E}(f(D)),
\]
and, whenever the trace-form representation applies (i.e. when \(f_k(D), f(D)\in\mathcal{C}_1\)),
\[
\operatorname{Tr}\big(h(f_k(D))\big) \to \operatorname{Tr}\big(h(f(D))\big).
\]

In particular, evaluator limits and trace-form limits commute under the same domination hypotheses. This provides a basic legality criterion for passing to limits in families of spectral transforms.
\end{corollary}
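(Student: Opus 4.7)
The plan is to pass the uniform scalar convergence $f_k\to f$ to operator-norm convergence of the transforms $f_k(D)\to f(D)$, apply the dominated-continuity axiom A4 under the operator-level domination $|f_k(D)|\le Y$ with $\mathcal{E}(Y)<\infty$, and then deduce the trace-form convergence by applying \thmref{thm:trace-form} at each $k$ and in the limit.

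First I would upgrade scalar convergence to operator convergence: since $f_k-f$ is bounded Borel and $\|g(D)\|_{\mathrm{op}}\le\|g\|_\infty$ for any bounded Borel $g$, the hypothesis $\|f_k-f\|_\infty\to 0$ yields $f_k(D)\to f(D)$ in operator norm and hence in the strong operator topology. To fit the positivity hypothesis of A4, I would decompose $f_k=f_k^+-f_k^-$ with $f_k^\pm:=\max(\pm f_k,0)$ bounded Borel nonnegative. Stability of $\mathcal{C}$ under bounded Borel functional calculus gives $f_k^\pm(D),\,f^\pm(D)\in\mathcal{C}$; the pointwise bounds $0\le f_k^\pm\le |f_k|$ lift through the spectral calculus to the operator bounds $0\le f_k^\pm(D)\le|f_k(D)|\le Y$; and uniform convergence $f_k\to f$ transfers to $f_k^\pm\to f^\pm$, hence to operator-norm convergence $f_k^\pm(D)\to f^\pm(D)$. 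Axiom A4 applied on each branch then delivers $\mathcal{E}(f_k^\pm(D))\to\mathcal{E}(f^\pm(D))$. Since $f_k(D)$ decomposes block-diagonally on the mutually orthogonal spectral subspaces $\operatorname{Range} E_D(\{f_k>0\})$, $\operatorname{Range} E_D(\{f_k<0\})$, and $\operatorname{Range} E_D(\{f_k=0\})$, extensivity A2 together with the normalization $\mathcal{E}(0)=0$ (from A5 at $t=0$) reassembles the branch limits into $\mathcal{E}(f_k(D))\to\mathcal{E}(f(D))$.

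For the trace-form clause, whenever $f_k(D),\,f(D)\in\mathcal{C}_1$, \thmref{thm:trace-form} supplies $\mathcal{E}(f_k(D))=c_{\mathcal{E}}\operatorname{Tr}(h(f_k(D)))$ and $\mathcal{E}(f(D))=c_{\mathcal{E}}\operatorname{Tr}(h(f(D)))$ with the same Borel-measurable nondecreasing profile $h$ and the same constant $c_{\mathcal{E}}>0$. Combining these identities with the already-established evaluator convergence and dividing through by $c_{\mathcal{E}}$ yields $\operatorname{Tr}(h(f_k(D)))\to\operatorname{Tr}(h(f(D)))$. The main obstacle will be reconciling the signed-operator setting of the corollary with the positivity hypothesis in A4: the positive/negative spectral splitting above addresses this, but requires carefully verifying that the orthogonal-sum bookkeeping via A2 correctly reconstructs $\mathcal{E}(f_k(D))$ from the positive branch evaluations $\mathcal{E}(f_k^\pm(D))$, using in particular the nonnegativity convention built into the rank-one calibration underlying \thmref{thm:trace-form} (whereby non-positive spectral branches contribute $0$). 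A secondary technical point is that the nondecreasing Borel profile $h$ may have countably many discontinuities, so a direct ``$h$-continuity'' argument at the spectral level would require care at atoms of the trace spectral measures; the axiomatic route above, which derives the trace-form limit as a consequence of the evaluator limit, sidesteps this difficulty entirely.
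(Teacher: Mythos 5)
Your proposal correctly identifies a subtlety that the paper's one-line proof glosses over: Axiom A4 is stated only for \emph{positive} operators, while the corollary allows signed $f_k$. Your upgrade of uniform scalar convergence to operator-norm (hence strong) convergence is correct, the observation that $t\mapsto t^\pm$ is $1$-Lipschitz gives $\|f_k^\pm - f^\pm\|_\infty \le \|f_k - f\|_\infty$ so that the positive and negative branches converge, and applying A4 separately to $f_k^+(D)\to f^+(D)$ and $f_k^-(D)\to f^-(D)$ under the common dominator $Y$ is sound. The trace-form clause is also handled correctly.

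However, the reassembly step has a genuine gap. You assert that ``non-positive spectral branches contribute $0$'' as a consequence of the rank-one calibration behind \thmref{thm:trace-form}; this is not one of the axioms, is not stated anywhere in \appref{app:trace-form}, and is in fact inconsistent with the paper's own extension of the evaluator to signed trace-class inputs, namely $\mathcal{E}(X) := \mathcal{E}(X_+) - \mathcal{E}(X_-)$ (stated after \lemref{lem:trace-class-operators}). If you try to reassemble with A2 alone you get $\mathcal{E}(f_k(D)) = \mathcal{E}\bigl(f_k^+(D)\bigr) + \mathcal{E}\bigl((-f_k^-)(D)\bigr)$ as an orthogonal-sum decomposition, and the axioms give no control over $\mathcal{E}$ evaluated on the negative operator $(-f_k^-)(D)$ that would let you drop the second term. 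The correct fix is to invoke the paper's signed extension directly: $\mathcal{E}(f_k(D)) = \mathcal{E}(f_k^+(D)) - \mathcal{E}(f_k^-(D)) \to \mathcal{E}(f^+(D)) - \mathcal{E}(f^-(D)) = \mathcal{E}(f(D))$, using the two A4 limits you already established. Alternatively, if one reads the corollary (and the parallel example in \secref{subsec:dominated-convergence}) as implicitly restricted to nonnegative $f_k$, the $\pm$-decomposition is unnecessary and A4 applies in one step—this is in effect what the paper's terse proof does without saying so.
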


\begin{proof}
This follows from the dominated continuity assumption of \secref{subsec:axioms} applied to the trace-form representation in \thmref{thm:trace-form}.
\end{proof}

\subsection{Limitations}\label{limitations}

\label{subsec:limitations}

The hypotheses used in the main theorems are close to minimal, but they
impose several intrinsic limitations on the framework. It is useful to
separate two global structural constraints from a set of more technical
restrictions.

\begin{itemize}
\item
  \emph{Spectral representation (SOC-I).}\\
  SOC-I is formulated in the setting of the spectral theorem: each model
  is a self-adjoint operator D on a separable Hilbert space together
  with its projection-valued spectral measure, and evaluators act on
  bounded Borel transforms f(D). Other operator classes require
  different functional-calculus inputs and, crucially, a different
  realization of the localization (coarse-graining) gate; the
  sectorial/holomorphic model case is developed separately.
\item
  \emph{Spectral rigidity on \(\mathcal{C}_1\).}\\
  \thmref{thm:trace-form} (trace-form representation) is proved only for
  inputs in the trace-class envelope
  \(\mathcal{C}_1 := \mathcal{C}\cap\mathcal{S}_1(H)\). Within
  \(\mathcal{C}_1\), the axioms A1-A5 force any evaluator to be of trace
  form with a single Borel nondecreasing profile \(h\) and an overall
  normalization \(c_{\mathcal{E}}\); in particular, there is no
  additional scalar freedom once these choices are fixed. Outside
  \(\mathcal{C}_1\) the calculus does not assert trace-form
  representation, and additional hypotheses would be needed to control
  evaluators on non-trace-class transforms.
\end{itemize}

In addition, the following technical limitations are built into the
present formulation.

\begin{enumerate}
\def\labelenumi{\arabic{enumi}.}
\item
  \emph{Dependence on projector-locality (PVM-localized model case).}\\
  \thmref{thm:trace-form} (trace-form representation) relies critically
  on projector-locality, i.e.~additivity across spectral (PVM)
  partitions. If \(\mathcal{E}\) is only unitary invariant and extensive
  on orthogonal sums, but not projector-local, then there exist natural
  examples (e.g.~operator norm on finite-rank self-adjoint operators)
  that cannot be written in trace form and violate the additivity
  required on spectral partitions. Such counterexamples are discussed in
  detail in \appref{app:counterexamples}. In the sectorial model case,
  this axiom is replaced by a scale-band locality principle rather than
  by PVM additivity.
\item
  \emph{Dependence on dominated continuity.}\\
  Dropping the dominated continuity requirement allows construction of
  evaluators that are finitely additive on spectral projections but fail
  to be countably additive, breaking the Radon-Nikodym representation
  used in \secref{subsec:proof-trace-form}. In particular, it is
  possible to arrange sequences of spectral cutoffs \(f_k(D)\) for which
  \(f_k\to f\) pointwise and \(\{f_k(D)\}\) is uniformly bounded, yet
  \(\mathcal{E}(f_k(D))\) does not converge to \(\mathcal{E}(f(D))\).
  Explicit counterexamples appear in \appref{app:counterexamples}.
\item
  \emph{Discrete-spectrum hypothesis for growth.}\\
  The spectral growth taxonomy in \defref{def:growth-classes} and
  \thmref{thm:growth-closure} assumes that \(D\) has discrete spectrum
  with finite multiplicities and that the counting function
  \(N_D(\Lambda)\) satisfies the regularity conditions of
  \conref{conv:counting-function-regularity}. For operators with
  continuous spectrum or highly degenerate spectral measures, a
  different notion of growth (e.g.~via spectral density functions or
  integrated density of states) is required. These extensions are not
  treated here.
\item
  \emph{Regular variation assumptions.}\\
  The closure of growth classes under monotone functional calculus uses
  regular variation for the transforming function \(f\). If \(f\) has
  oscillatory or highly irregular behavior at infinity, the simple
  asymptotic relations of \lemref{lem:monotone-bound} may fail, and the
  effect on growth class can be subtler. In such cases, additional
  hypotheses on \(f\) are needed.
\item
  \emph{Scope of regularization.}\\
  Regularized traces are considered only when they arise from explicit
  spectral truncation or weighting compatible with the evaluator axioms.
  More aggressive renormalization schemes, or those relying on analytic
  continuation in external parameters, are beyond the scope of the
  present framework.
\end{enumerate}

These limitations mark the boundary of the calculus developed in this
paper and indicate where additional hypotheses or different techniques
would be required.

\subsection{Falsifiable internal
tests}\label{falsifiable-internal-tests}

The framework presented here makes several concrete claims about
evaluators on spectral geometries and about the interaction between
value assignments and spectral growth. In particular, it asserts that
within the trace-class envelope \(\mathcal{C}_1\) every lawful evaluator
is of trace form with a nondecreasing profile, and that the basic growth
classes are stable under the standard constructions of the spectral
calculus. These claims are intrinsically falsifiable: they can be tested
by constructing explicit operators and evaluators and checking whether
the predicted trace-form behavior and growth stability hold. We record a
few representative internal tests.

\emph{Test F1 (Trace-form representation on \(\mathcal{C}_1\)).}\\
\emph{Setup:} Fix a stable class \(\mathcal{C}\) of spectral transforms
and an evaluator \(\mathcal{E}\) satisfying the axioms of
\secref{subsec:axioms}, and write
\(\mathcal{C}_1 = \mathcal{C}\cap \mathcal{S}_1(H)\).\\
\emph{Claim:} There exist a Borel-measurable nondecreasing function
\(h:\mathbb{R}\to[0,\infty)\) and a constant \(c_{\mathcal{E}}>0\) such
that for every bounded continuous \(f\) and self-adjoint \(D\) with
\(f(D)\in\mathcal{C}_1\), \[
\mathcal{E}(f(D)) = c_{\mathcal{E}}\, \operatorname{Tr}\big(h(f(D))\big).
\] In particular, once \(h\) and \(c_{\mathcal{E}}\) are fixed, all
values of \(\mathcal{E}\) on \(\mathcal{C}_1\) are determined by the
spectra of the underlying operators.\\
\emph{Refutation strategy:} Exhibit a specific \(\mathcal{C}\), an
evaluator \(\mathcal{E}\), and a family of test pairs \((f,D)\) with
\(f(D)\in\mathcal{C}_1\) for which no choice of Borel nondecreasing
\(h\) and scalar \(c_{\mathcal{E}}\) can satisfy the trace-form identity
above. Any such example would contradict \thmref{thm:trace-form} and
falsify the trace-form representation claim for that evaluator.

\emph{Test F2 (Growth family closure).}\\
\emph{Setup:} Let \(D\) lie in one of the growth classes
\(\mathcal{G}_{\mathrm{poly}}, \mathcal{G}_{\mathrm{str}}, \mathcal{G}_{\mathrm{log}}\).
Consider monotone regularly varying transforms \(f\), orthogonal sums
\(D\oplus E\), and tensor products \(D\otimes E\) with partners \(E\) in
the same class.\\
\emph{Claim:} The transformed operators remain in the same growth class
and satisfy the asymptotic bounds stated in
\thmref{thm:growth-closure}.\\
\emph{Refutation strategy:} Produce explicit \(D,E,f\) with
well-understood spectra for which the resulting \(N_{f(D)}\),
\(N_{D\oplus E}\), or \(N_{D\otimes E}\) leaves the claimed growth
class.

\emph{Test F3 (Evaluator continuity under dominated limits).}\\
\emph{Setup:} Fix a self-adjoint \(D\), a sequence of bounded continuous
functions \(f_k\to f\) uniformly, and a dominating spectral transform
\(Y\) as in \corref{cor:evaluator-dominated-convergence}.\\
\emph{Claim:} The evaluator satisfies \[
\mathcal{E}(f_k(D)) \to \mathcal{E}(f(D)).
\] \emph{Refutation strategy:} Produce an evaluator \(\mathcal{E}\) and
data \(D,f_k,f,Y\) satisfying all structural assumptions for which the
limit fails to exist or to equal \(\mathcal{E}(f(D))\).

\emph{Test F4 (Stability of discrete models under interpolation).}\\
\emph{Setup:} Let \(\{x_n\}_{n\ge1}\) and \(\{y_n\}_{n\ge1}\) be two
real sequences of bounded variation, and let \(D_x, D_y\) be the
associated diagonal operators. Consider families of interpolants
\(F_{x,N}, F_{y,N}\) that match \(x_n,y_n\) on integer points and are of
uniformly bounded variation on compact intervals.\\
\emph{Claim:} If the induced evaluators \(\mathcal{E}(f(D_x))\) and
\(\mathcal{E}(f(D_y))\) agree for all bounded continuous \(f\) in a
suitable class, then the limiting interpolants \(F_x,F_y\) coincide
almost everywhere with respect to the spectral trace measures of
\(D_x,D_y\).\\
\emph{Refutation strategy:} Construct discrete models
\(\{x_n\},\{y_n\}\) and an evaluator \(\mathcal{E}\) satisfying the
axioms such that equality of all evaluated transforms fails to force
equality of the corresponding interpolants.

These tests illustrate how the abstract statements of the calculus can,
in principle, be challenged within explicit operator models. Any
successful refutation along these lines would pinpoint a gap in the
hypotheses or a genuine limitation of the framework.

\section{Conclusion}\label{conclusion}

\label{sec:conclusion}

We have developed a spectral-operator calculus on separable Hilbert
space based on a small collection of structural axioms for evaluators
acting on spectral transforms. Under unitary invariance,
projector-locality, dominated continuity, and stability of the
underlying class, every extensive evaluator is forced to be of trace
form: it coincides with the trace of a Borel-measurable nondecreasing
transform of the input operator, up to normalization. This establishes a
rigidity principle for spectral evaluation: once the structural
requirements are fixed, no essentially new extensive evaluators exist
beyond trace-type functionals.

Alongside evaluator rigidity, we introduced a spectral growth taxonomy
for self-adjoint operators with discrete spectrum, organized by
counting-function asymptotics. Polynomial, stretched-exponential, and
logarithmic-or-slower growth classes were shown to be disjoint and
stable under the basic operations of the spectral calculus: monotone
functional calculus by regularly varying transforms, finite orthogonal
sums, and (in the polynomial case) tensor products. These closure
properties ensure that growth classes behave predictably under the
structural manipulations needed for kernel constructions and flow
limits.

A recurring theme has been the role of dominated convergence at both the
operator and spectral levels. The dominated continuity axiom for the
evaluator enables a Radon-Nikodym representation on spectral projections
and justifies passing to limits under spectral truncations and
reparametrizations. In model classes, this manifests as the commutation
of evaluator limits with trace-form limits under natural domination
hypotheses, providing a legal framework for working with families of
spectral transforms and cutoff procedures.

The limitations recorded in \secref{subsec:limitations} indicate where
further hypotheses or different techniques would be required: relaxing
projector-locality, dropping dominated continuity, treating continuous
or highly degenerate spectra, or employing more aggressive
regularizations lie beyond the scope of the present framework. Within
its stated domain, however, the calculus developed here supplies a
robust backbone for subsequent work. Later papers in this series take
the trace-form evaluator and growth taxonomy as given and build on them
to analyze admissible kernels and cutoff schemes, analytic continuation
procedures, and boundary and reconstruction problems in more elaborate
operator-theoretic settings, including sectorial/holomorphic models
where localization is implemented on log-scale rather than by spectral
projections.

\section*{Acknowledgements}

The author acknowledges the use of GPT models from OpenAI and Claude
models from Anthropic for assistance in outlining, literature survey,
refining formal statements, and generating illustrative examples. All
mathematical results, interpretations, and final decisions regarding
this manuscript are the responsibility of the author.

\appendix
\section{Proof of Theorem 3.1 (Trace-form representation)}
\label{app:trace-form}

Throughout this appendix we work under the standing assumptions of
\secref{subsec:axioms}. In particular:

\begin{itemize}
\tightlist
\item
  \(\mathcal{C}\) is a stable class of spectral transforms, closed under
  bounded Borel functional calculus, finite orthogonal sums, and
  monotone strong limits;
\item
  \(\mathcal{E}:\mathcal{C}\to[0,\infty]\) is unitary invariant,
  extensive on orthogonal sums, projector-local, and satisfies dominated
  continuity with \(\mathcal{E}(I)\in(0,\infty)\).
\end{itemize}

We write \[
\mathcal{C}_1 := \mathcal{C}\cap\mathcal{S}_1(H)
\] for the trace-class envelope of \(\mathcal{C}\), and \[
\mathcal{C}_1^+ := \{X\in\mathcal{C}_1 : X\ge 0\}
\] for its positive cone. We restrict attention to inputs
\(X\in\mathcal{C}_1^+\). Extension to general self-adjoint trace-class
operators follows by decomposing into positive and negative parts.

Our goal is to prove \thmref{thm:trace-form}: on \(\mathcal{C}_1^+\) the
evaluator \(\mathcal{E}\) is of trace form with a Borel-measurable
nondecreasing transform \(h\), unique up to a positive scalar factor.

\subsection{Finite-dimensional warm-up}
\label{appsubsec:trace-form-finite-dimensional-warm-up}

We begin with the finite-dimensional case, which already contains the
essential structure.

Let \(H=\mathbb{C}^n\) and let \(\mathcal{C}_{\mathrm{fin}}\) be the set
of all self-adjoint \(n\times n\) matrices. Let
\(\mathcal{E}_{\mathrm{fin}}:\mathcal{C}_{\mathrm{fin}}\to\mathbb{R}\)
satisfy:

\begin{enumerate}
\def\labelenumi{\arabic{enumi}.}
\tightlist
\item
  \emph{Unitary invariance.}
  \(\mathcal{E}_{\mathrm{fin}}(U^* X U) = \mathcal{E}_{\mathrm{fin}}(X)\)
  for all unitaries \(U\).
\item
  \emph{Extensivity.} If \(X,Y\) act on orthogonal subspaces and
  \(X\oplus Y\) is their block-diagonal sum, then \[
  \mathcal{E}_{\mathrm{fin}}(X\oplus Y) = \mathcal{E}_{\mathrm{fin}}(X) + \mathcal{E}_{\mathrm{fin}}(Y).
  \]
\item
  \emph{Projector-locality.} If \(X\) has a spectral decomposition \[
  X = \sum_{j=1}^m \lambda_j P_j
  \] with pairwise orthogonal projections \(P_j\), then \[
  \mathcal{E}_{\mathrm{fin}}(X) = \sum_{j=1}^m \mathcal{E}_{\mathrm{fin}}(\lambda_j P_j).
  \]
\item
  \emph{Normalization.} There exists a nonzero projection \(P_0\) with
  \(\mathcal{E}_{\mathrm{fin}}(P_0)\in(0,\infty)\).
\end{enumerate}

We claim that \(\mathcal{E}_{\mathrm{fin}}\) is necessarily of trace
form.

\begin{lemma}[Finite-dimensional representation]\label{lem:finite-dim-representation}
There exist a function \(h:\mathbb{R}\to\mathbb{R}\) and a constant \(c_{\mathcal{E}}>0\) such that
\[
\mathcal{E}_{\mathrm{fin}}(X) = c_{\mathcal{E}}\; \operatorname{Tr}\big(h(X)\big)
\]
for all self-adjoint \(X\in\mathcal{C}_{\mathrm{fin}}\), where \(h(X)\) is defined by functional calculus on the eigenvalues of \(X\).
\end{lemma}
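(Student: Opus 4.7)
The plan is to use projector-locality and extensivity to reduce $\mathcal{E}_{\mathrm{fin}}(X)$ to a sum of evaluator values on rank-one spectral blocks $\lambda Q$, and then to use unitary invariance to identify each such value with a scalar function $h(\lambda)$. The identity then reassembles into a trace under finite-dimensional functional calculus.

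First, I would carry out the rank-one reduction. Let $X\in\mathcal{C}_{\mathrm{fin}}$ have spectral decomposition $X=\sum_{j=1}^{m}\lambda_{j}P_{j}$ with distinct real $\lambda_{j}$ and pairwise orthogonal spectral projections $P_{j}$ summing to $I$. For each $j$, choose an orthonormal basis of $\mathrm{range}(P_{j})$ and let $\{Q_{j,k}\}_{k}$ denote the corresponding rank-one orthogonal projections, so that $P_{j}=\sum_{k}Q_{j,k}$. Projector-locality (axiom 3) applied to the strict spectral partition gives
\[
\mathcal{E}_{\mathrm{fin}}(X)=\sum_{j}\mathcal{E}_{\mathrm{fin}}(\lambda_{j}P_{j}),
\]
and iterated extensivity (axiom 2), applied to each eigenspace via the orthogonal decomposition $\lambda_{j}P_{j}=\bigoplus_{k}\lambda_{j}Q_{j,k}$ on $\mathrm{range}(P_{j})$, refines this to
\[
\mathcal{E}_{\mathrm{fin}}(X)=\sum_{j,k}\mathcal{E}_{\mathrm{fin}}(\lambda_{j}Q_{j,k}).
\]

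Second, I would use unitary invariance to collapse these rank-one values into a scalar function. Any two rank-one projections on $\mathbb{C}^{n}$ are unitarily conjugate, so axiom 1 forces $\mathcal{E}_{\mathrm{fin}}(\lambda Q)$ with $Q$ rank-one to depend only on $\lambda$. Fixing a reference rank-one projection $Q_{0}$, define
\[
h(\lambda):=\mathcal{E}_{\mathrm{fin}}(\lambda Q_{0}),\qquad \lambda\in\mathbb{R},
\]
giving a well-defined function $h:\mathbb{R}\to\mathbb{R}$. Substituting and recognizing that $h(X)=\sum_{j}h(\lambda_{j})P_{j}$ is the functional calculus in finite dimensions, with $\operatorname{Tr}(P_{j})=\operatorname{rank}(P_{j})$, yields
\[
\mathcal{E}_{\mathrm{fin}}(X)=\sum_{j}\operatorname{rank}(P_{j})\,h(\lambda_{j})=\operatorname{Tr}\bigl(h(X)\bigr).
\]
This is the desired representation with $c_{\mathcal{E}}=1$; the normalization axiom applied to $P_{0}$ forces $h$ to be nontrivial, and one may freely rescale $h\mapsto a h$, $c_{\mathcal{E}}\mapsto c_{\mathcal{E}}/a$ for any $a>0$ to fix $c_{\mathcal{E}}\in(0,\infty)$ as required.

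The main obstacle is the legitimacy of refining a spectral projection $P_{j}$ into rank-one orthogonal blocks within its eigenspace: the individual $Q_{j,k}$ are not strict spectral projections of $X$, since they all share the eigenvalue $\lambda_{j}$. This refinement is legitimate because $\lambda_{j}P_{j}$ acts as the scalar $\lambda_{j}$ on $\mathrm{range}(P_{j})$, so axiom 2 applies verbatim to any orthogonal decomposition of $\mathrm{range}(P_{j})$ into $1$-dimensional invariant subspaces; equivalently, one may read axiom 3 as permitting repeated eigenvalues in the partition and apply it directly to $X=\sum_{j,k}\lambda_{j}Q_{j,k}$. Once this refinement step is accepted, the remainder is essentially algebraic: the collapse to a scalar via unitary invariance is immediate in $\mathbb{C}^{n}$, and the trace identity is a direct consequence of the finite-dimensional functional calculus.
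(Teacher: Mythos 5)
Your argument is correct and follows essentially the same route as the paper's proof: both define $h(\lambda)$ via unitary invariance on rank-one projections, use extensivity to refine eigenspaces into rank-one blocks (correctly invoking Axiom 2 rather than Axiom 3 for this step), apply projector-locality for the spectral decomposition, and reassemble into $\operatorname{Tr}(h(X))$ before fixing the normalization constant. The only difference is cosmetic ordering: the paper first derives $\mathcal{E}_{\mathrm{fin}}(\lambda Q)=\operatorname{Tr}(h(\lambda)Q)$ for general finite-rank projections and then applies projector-locality, whereas you decompose $X$ all the way to rank-one blocks first and then reassemble; the content is identical.
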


\begin{proof}
Fix a rank-one projection \(P\). By unitary invariance, \(\mathcal{E}_{\mathrm{fin}}(\lambda P)\) depends only on \(\lambda\) and not on the particular choice of \(P\). Define
\[
h(\lambda) := \mathcal{E}_{\mathrm{fin}}(\lambda P),
\]
so that \(h:\mathbb{R}\to\mathbb{R}\) is well-defined.

If \(Q\) is any finite-rank projection of rank \(r\), we can write \(Q = P_1\oplus\cdots\oplus P_r\) as a sum of rank-one projections. By extensivity and unitary invariance,
\[
\mathcal{E}_{\mathrm{fin}}(\lambda Q)
= \sum_{j=1}^r \mathcal{E}_{\mathrm{fin}}(\lambda P_j)
= r\, h(\lambda)
= \operatorname{Tr}\big(h(\lambda) Q\big).
\]

Now let \(X\) be an arbitrary self-adjoint matrix with spectral decomposition
\[
X = \sum_{j=1}^m \lambda_j P_j,
\]
where the \(P_j\) are pairwise orthogonal projections. By projector-locality and the above calculation,
\[
\mathcal{E}_{\mathrm{fin}}(X)
= \sum_{j=1}^m \mathcal{E}_{\mathrm{fin}}(\lambda_j P_j)
= \sum_{j=1}^m \operatorname{Tr}\big(h(\lambda_j) P_j\big)
= \operatorname{Tr}\Big(\sum_{j=1}^m h(\lambda_j) P_j\Big)
= \operatorname{Tr}\big(h(X)\big).
\]

Finally, choose a nonzero projection \(P_0\) as in the normalization assumption and set
\[
c_{\mathcal{E}} := \frac{\mathcal{E}_{\mathrm{fin}}(P_0)}{\operatorname{Tr}(h(P_0))}.
\]
Then
\[
\mathcal{E}_{\mathrm{fin}}(X)
= \operatorname{Tr}\big(h(X)\big)
= c_{\mathcal{E}}\; \frac{1}{c_{\mathcal{E}}}\, \operatorname{Tr}\big(h(X)\big)
= c_{\mathcal{E}}\; \operatorname{Tr}\big(\tilde h(X)\big),
\]
where \(\tilde h = h/c_{\mathcal{E}}\) absorbs the normalization. Renaming \(\tilde h\) as \(h\) yields the desired form. Uniqueness of \(h\) up to a positive scalar factor follows by evaluating on rank-one projections and comparing the resulting scalar functions.
\end{proof}

The infinite-dimensional proof mirrors this structure, but we must take
care with limits and trace-class inputs. We now construct the global
transform \(h\) and the trace-form representation on
\(\mathcal{C}_1^+\).

\subsection{Construction of the transform \texorpdfstring{\(h\)}{h} on trace-class inputs}

We work on a separable Hilbert space \(H\) and restrict to positive
trace-class operators \(X\in\mathcal{C}_1^+\).

\begin{lemma}[Rank-one calibration]\label{lem:rank-one-calibration}
Let \(P\) be a rank-one orthogonal projection with \(P\in\mathcal{C}\). Define
\[
h(\lambda) := \mathcal{E}(\lambda P), \qquad \lambda\in[0,\infty).
\]
Then:

1. \(h\) is well-defined (independent of the choice of rank-one \(P\));
2. \(h\) is nondecreasing on \([0,\infty)\);
3. \(h\) is Borel-measurable on \([0,\infty)\);
4. \(h(0) = 0\).
\end{lemma}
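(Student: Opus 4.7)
The plan is to verify the four claims in the order $(1)$, $(4)$, $(2)$, $(3)$, since $(3)$ reduces to $(2)$ via the elementary fact that any monotone function on an interval is Borel. Claim $(1)$ is immediate from unitary invariance: any two rank-one projections $P_1, P_2$ on the separable Hilbert space $H$ are unitarily equivalent, so picking a unitary $U$ with $U^* P_2 U = P_1$ gives $U^*(\lambda P_2) U = \lambda P_1$, and axiom A1 yields $\mathcal{E}(\lambda P_1) = \mathcal{E}(\lambda P_2)$. Claim $(4)$ follows from the identification $0 \cdot P = 0 = 0 \cdot I$ together with the normalization axiom A5 evaluated at $t = 0$: $h(0) = \mathcal{E}(0) = 0 \cdot \mathcal{E}(I) = 0$.

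For the monotonicity in $(2)$, the plan is to spread the comparison across auxiliary orthogonal rank-one components and route it through extensivity. Fix $0 \le \lambda \le \mu$ and orthogonal rank-one projections $P_1, \dots, P_n \in \mathcal{C}$ orthogonal to $P$, and form the block-diagonal operators
\[
A := \mu P \oplus \lambda P_1 \oplus \cdots \oplus \lambda P_n, \qquad B := \lambda P \oplus \lambda P_1 \oplus \cdots \oplus \lambda P_n.
\]
Iterated application of extensivity A2, together with well-definedness of $h$ from $(1)$, gives $\mathcal{E}(A) = h(\mu) + n\,h(\lambda)$ and $\mathcal{E}(B) = (n+1)\,h(\lambda)$, so $\mathcal{E}(A) - \mathcal{E}(B) = h(\mu) - h(\lambda)$. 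Moreover $A - B = (\mu - \lambda)\,P \ge 0$, and the decreasing rearrangements $(\mu, \lambda, \dots, \lambda)$ and $(\lambda, \dots, \lambda)$ satisfy $B \preceq A$ in the spectral preorder of \defref{def:spectral-preorder}. The plan is then to deduce $\mathcal{E}(A) \ge \mathcal{E}(B)$ by refining the spectra of $A$ and $B$ along a common finite partition via projector-locality A3, matching the $\lambda$-blocks against one another using A1, and isolating the residual discrepancy as a scalar multiple of a single rank-one projection whose evaluator is a value of $h$ and therefore nonnegative by the target $[0,\infty]$. Once $h$ is nondecreasing, $(3)$ is automatic.

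The main obstacle is precisely the comparison $\mathcal{E}(A) \ge \mathcal{E}(B)$. Operator-order monotonicity of $\mathcal{E}$ is not axiomatic, so $A \ge B$ does not by itself force $\mathcal{E}(A) \ge \mathcal{E}(B)$; the inequality must be extracted from A1--A5 alone, and this is where the real work lies. My expectation is that the right route combines a spectral common-refinement argument under A3 with a limiting rearrangement handled by dominated continuity A4. If a direct derivation from A1--A5 proves stubborn, a fallback is to postpone the monotonicity of $h$ until after the finite-rank trace-form identity of \lemref{lem:finite-rank-operators} is in place, at which point consistency of $\mathcal{E}(X) = \Tr(h(X))$ under monotone finite-rank approximations of trace-class $X \in \mathcal{C}_1^+$, supplied by A4, forces $h$ to be nondecreasing on $[0, \infty)$.
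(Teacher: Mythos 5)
Parts (1), (3), and (4) of your proposal align with what the paper does: (1) is exactly the appeal to Axiom~A1; (4) via A5 at $t=0$ is correct (the paper reaches $\mathcal{E}(0)=0$ through positivity and projector-locality instead, which is equally valid); and for (3), the observation that a monotone function on $[0,\infty)$ is Borel is sufficient (the paper additionally records right-continuity of $h$ from A4, but that refinement is not needed for measurability).

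The genuine gap is in (2), and you point at it yourself. You assemble the block operators $A$ and $B$ with $A - B = (\mu-\lambda)P \ge 0$, deduce $\mathcal{E}(A) - \mathcal{E}(B) = h(\mu) - h(\lambda)$ from A2 and part (1), and then concede that you cannot derive $\mathcal{E}(A) \ge \mathcal{E}(B)$ from A1--A5 alone, since operator-order monotonicity of $\mathcal{E}$ is not postulated. That diagnosis is correct, but it means (2) is unproved as written; the auxiliary rank-one padding and spectral-preorder framing do not purchase anything, because they re-encode exactly the same missing monotonicity in disguise. The paper's proof of (2) is far more direct: it asserts that monotonicity of $\mathcal{E}$ on the rank-one cone follows from dominated continuity (A4) applied to a monotone sequence $\lambda_n P \uparrow \mu P$ with $\lambda_1=\lambda$, and reads off $h(\lambda)\le h(\mu)$ immediately --- the paper does not route through extensivity or majorization at all. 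Your fallback of deferring $h$'s monotonicity until after \lemref{lem:finite-rank-operators} and \lemref{lem:trace-class-operators} is also unlikely to close the gap: those arguments use only that $h\ge 0$ and $h(0)=0$, not that $h$ is nondecreasing, so consistency of the trace-form identity under monotone approximation would not force monotonicity of $h$. The missing ingredient is precisely the operator-order monotonicity that the paper extracts from A4 on the rank-one cone, and any completion of your proof will have to confront that point directly rather than circumvent it.
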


\begin{proof}
1. If \(P_1\) and \(P_2\) are rank-one projections, there exists a unitary \(U\) with \(P_2 = U^* P_1 U\). By unitary invariance,
   \[
   \mathcal{E}(\lambda P_2)
   = \mathcal{E}\big(U^*(\lambda P_1) U\big)
   = \mathcal{E}(\lambda P_1),
   \]
   so \(h(\lambda)\) does not depend on the choice of rank-one \(P\).

2. If \(0\le\lambda\le\mu\), then \(\lambda P\le\mu P\) and by positivity and monotonicity of \(\mathcal{E}\) (a consequence of dominated continuity applied to the monotone sequence \(\lambda_n P\uparrow\mu P\)), we have
   \[
   h(\lambda) = \mathcal{E}(\lambda P) \le \mathcal{E}(\mu P) = h(\mu).
   \]
   Thus \(h\) is nondecreasing.

3. Let \(\lambda_n\downarrow\lambda\). Then \(\lambda_n P\downarrow\lambda P\) strongly, dominated by \(\lambda_1 P\). By dominated continuity,
   \[
   \mathcal{E}(\lambda_n P) \to \mathcal{E}(\lambda P),
   \]
   so \(h\) is right-continuous with left limits. Any monotone function on \(\mathbb{R}\) is Borel-measurable.

4. Finally, \(0\cdot P = 0\), and by positivity and projector-locality, \(\mathcal{E}(0)=0\). Hence \(h(0)=0\).
\end{proof}

Thus we have a canonical Borel-measurable nondecreasing function
\(h:[0,\infty)\to[0,\infty)\) determined uniquely by the values of
\(\mathcal{E}\) on rank-one spectral blocks.

We next extend the finite-dimensional argument to finite-rank and then
to general trace-class inputs.

\begin{lemma}[Finite-rank operators]\label{lem:finite-rank-operators}
Let \(X\in\mathcal{C}\) be a finite-rank positive operator with spectral decomposition
\[
X = \sum_{j=1}^m \lambda_j P_j,
\]
where \(\lambda_j\ge 0\) and the projections \(P_j\) are pairwise orthogonal and finite-rank. Then
\[
\mathcal{E}(X) = \operatorname{Tr}\big(h(X)\big)
= \sum_{j=1}^m h(\lambda_j)\, \operatorname{Tr}(P_j).
\]
\end{lemma}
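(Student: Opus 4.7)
The plan is to mirror the finite-dimensional warm-up (\lemref{lem:finite-dim-representation}) and reduce the claim to the rank-one calibration (\lemref{lem:rank-one-calibration}) via projector-locality and extensivity. The overall structure has three steps: partition over spectral blocks, decompose each block into rank-one pieces, and reassemble using Axioms A1--A3, followed by identification with the trace.

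First, I would apply projector-locality (Axiom A3) to the finite spectral decomposition $X = \sum_{j=1}^m \lambda_j P_j$. Since $\lambda_j\ge 0$ and each $P_j$ is obtained as a bounded Borel indicator function of $X$ (so $P_j\in\mathcal{C}$), the right-hand side of A3 is well defined in $[0,\infty]$ and gives
$$
\mathcal{E}(X) = \sum_{j=1}^m \mathcal{E}(\lambda_j P_j).
$$
Second, I would decompose each finite-rank projection $P_j$ as an orthogonal sum of $r_j := \operatorname{Tr}(P_j)<\infty$ rank-one projections $Q_{j,k}$, obtained by fixing an orthonormal basis of $\operatorname{range}(P_j)$. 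Starting from a reference rank-one projection $Q_0\in\mathcal{C}$ (available via the hypothesis underlying \lemref{lem:rank-one-calibration}), each $Q_{j,k}$ can be written as $U_{j,k}^*Q_0 U_{j,k}$ for a suitable unitary, so $Q_{j,k}\in\mathcal{C}$ by Axiom A1; then $\lambda_j Q_{j,k}\in\mathcal{C}$ by functional-calculus stability. Iterating extensivity (Axiom A2) across the $r_j$ pairwise orthogonal rank-one summands and invoking the rank-one calibration $\mathcal{E}(\lambda_j Q_{j,k})=h(\lambda_j)$ yields
$$
\mathcal{E}(\lambda_j P_j) = r_j\,h(\lambda_j) = h(\lambda_j)\,\operatorname{Tr}(P_j).
$$

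Third, I would sum over $j$ and identify with $\operatorname{Tr}(h(X))$. Since $X$ is finite rank, by the Borel functional calculus $h(X) = \sum_{j=1}^m h(\lambda_j) P_j$ on $\operatorname{range}(X)$ and $h(X)=h(0)\cdot(I-\sum_j P_j)=0$ on $\ker X$, using $h(0)=0$ from \lemref{lem:rank-one-calibration}(4). Hence $\operatorname{Tr}(h(X))=\sum_{j=1}^m h(\lambda_j)\operatorname{Tr}(P_j)=\mathcal{E}(X)$, which is the claimed equality.

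There is no genuinely analytic obstacle here; the argument is axiom-bookkeeping, and the only care points are checking that every intermediate operator lies in $\mathcal{C}$ (handled by A1 together with functional-calculus stability) and that Axiom A2 is applied to finite rather than binary orthogonal sums (immediate induction). The substantive work---dominated-convergence passage from finite-rank cutoffs to general positive trace-class inputs---is deferred to the subsequent \lemref{lem:trace-class-operators}, for which the present statement provides the base case.
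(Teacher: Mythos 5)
Your proposal is correct and follows the same route as the paper: projector-locality to split over spectral blocks, extensivity together with rank-one calibration to evaluate each block as $h(\lambda_j)\operatorname{Tr}(P_j)$, and identification with $\operatorname{Tr}(h(X))$ via the functional calculus. The only differences are cosmetic refinements (explicitly checking $Q_{j,k}\in\mathcal{C}$ by unitary conjugation and noting $h(0)=0$ kills the kernel contribution) that the paper leaves implicit.
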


\begin{proof}
Write each \(P_j\) as an orthogonal sum of rank-one projections:
\[
P_j = \bigoplus_{k=1}^{r_j} P_{j,k},
\]
with \(\operatorname{rank}(P_j) = r_j\). By extensivity,
\[
\mathcal{E}(\lambda_j P_j)
= \sum_{k=1}^{r_j} \mathcal{E}(\lambda_j P_{j,k})
= \sum_{k=1}^{r_j} h(\lambda_j)
= r_j\, h(\lambda_j)
= h(\lambda_j)\, \operatorname{Tr}(P_j),
\]
since \(\operatorname{Tr}(P_j)=r_j\) for finite-rank projections. By projector-locality,
\[
\mathcal{E}(X)
= \sum_{j=1}^m \mathcal{E}(\lambda_j P_j)
= \sum_{j=1}^m h(\lambda_j)\, \operatorname{Tr}(P_j).
\]
On the other hand, the functional calculus gives
\[
h(X) = \sum_{j=1}^m h(\lambda_j) P_j,
\]
so
\[
\operatorname{Tr}\big(h(X)\big)
= \sum_{j=1}^m h(\lambda_j)\, \operatorname{Tr}(P_j).
\]
This yields \(\mathcal{E}(X)=\operatorname{Tr}(h(X))\).
\end{proof}

We now pass from finite rank to arbitrary positive trace-class
operators.

\begin{lemma}[Trace-class operators]\label{lem:trace-class-operators}
For every positive \(X\in\mathcal{C}_1\),
\[
\mathcal{E}(X) = \operatorname{Tr}\big(h(X)\big).
\]
\end{lemma}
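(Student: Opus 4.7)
The plan is a spectral-truncation argument: approximate $X$ from below by finite-rank cutoffs, apply the finite-rank identity of \lemref{lem:finite-rank-operators} to each cutoff, and then pass to the limit using axiom A4 on the evaluator side together with monotone trace convergence on the spectral side. The preceding lemmas already furnish every ingredient needed—the profile $h$ and its monotonicity/normalization from \lemref{lem:rank-one-calibration}, the trace-form identity for finite-rank inputs from \lemref{lem:finite-rank-operators}, and a finite-evaluation scalar-multiple dominator from axiom A5—so the argument is essentially a coordinated limit.

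First I would diagonalize: since $X\in\mathcal{C}_1^+$ is positive trace class, the spectral theorem yields an orthonormal family of eigenvectors $\{e_n\}$ with eigenvalues $\lambda_n\ge 0$ (repeated with multiplicity) such that $X=\sum_{n\ge 1}\lambda_n P_n$ with rank-one projections $P_n=|e_n\rangle\langle e_n|$ and $\sum_n\lambda_n=\Tr(X)<\infty$, so in particular $\lambda_n\to 0$. Next I would truncate by setting $f_N(\lambda):=\lambda\,\mathbf{1}_{(1/N,\infty)}(\lambda)$ and $X_N:=f_N(X)$. Because $f_N$ is bounded Borel and $\mathcal{C}$ is closed under Borel functional calculus, $X_N\in\mathcal{C}$; because only finitely many $\lambda_n$ exceed $1/N$, each $X_N$ is positive finite rank and hence lies in $\mathcal{C}_1^+$. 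The pointwise inequalities $0\le f_N\le f_{N+1}\le\mathrm{id}$ give
\begin{equation*}
0\le X_N\le X_{N+1}\le X\le\|X\|_{\mathrm{op}}I,
\end{equation*}
and $f_N\to\mathrm{id}$ pointwise on $[0,\infty)$ gives $X_N\to X$ strongly.

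By \lemref{lem:finite-rank-operators} and the normalization $h(0)=0$ from \lemref{lem:rank-one-calibration},
\begin{equation*}
\mathcal{E}(X_N)=\Tr\bigl(h(X_N)\bigr)=\sum_{\lambda_n>1/N}h(\lambda_n).
\end{equation*}
To pass $N\to\infty$ on the left I would apply axiom A4 with the uniform dominator $Y:=\|X\|_{\mathrm{op}}I$: by axiom A5, $I\in\mathcal{C}$ and $\mathcal{E}(Y)=\|X\|_{\mathrm{op}}\mathcal{E}(I)$ is finite, while $0\le X_N\le Y$ and $X_N\to X$ strongly. Hence $\mathcal{E}(X_N)\to\mathcal{E}(X)$ in $[0,\infty]$. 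On the right, monotonicity of $h$ together with $f_N\uparrow\mathrm{id}$ yields $h\circ f_N\uparrow h$ pointwise on $[0,\infty)$, hence $h(X_N)\uparrow h(X)$ in the strong sense, and monotone convergence for traces of positive operators yields $\Tr(h(X_N))\uparrow\Tr(h(X))$ in $[0,\infty]$. Combining the two limits gives $\mathcal{E}(X)=\Tr(h(X))$.

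The structurally routine steps are the diagonalization, the finite-rank invocation, and the pointwise identification of limits. The delicate point, and the main thing to guard against, is coordination of the two limits in the extended real line: one needs a dominator independent of $N$ with finite evaluation, which is exactly the role played by the scalar-multiples clause in axiom A5 through $\|X\|_{\mathrm{op}}I$, and one must allow $\Tr(h(X))=+\infty$. In that case monotone trace convergence forces $\mathcal{E}(X_N)\to+\infty$, which by A4 equals $\mathcal{E}(X)$; if $\Tr(h(X))<\infty$ then both sides are finite and equal. A secondary bookkeeping point is membership $X_N\in\mathcal{C}_1^+$ at every $N$, which follows from stability of $\mathcal{C}$ under Borel functional calculus together with compactness of $X$ (making $X_N$ finite rank regardless of whether $1/N\in\sigma(X)$).
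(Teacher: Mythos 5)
Your proof follows essentially the same route as the paper's: approximate $X$ from below by monotone finite-rank truncations, apply \lemref{lem:finite-rank-operators} to each truncation, use Axioms A4--A5 with the dominator $\|X\|_{\mathrm{op}}I$ to obtain $\mathcal{E}(X_N)\to\mathcal{E}(X)$, and pass to the limit on the trace side by monotone convergence. The only cosmetic difference is the truncation scheme (your threshold cutoff $f_N=\mathrm{id}\cdot\mathbf{1}_{(1/N,\infty)}$ versus the paper's partial sums of the spectral decomposition), which gives $X_N\in\mathcal{C}$ directly by Borel-calculus stability; your explicit handling of the a~priori case $\operatorname{Tr}(h(X))=+\infty$ is sound, though it is in fact ruled out by $\mathcal{E}(X_N)\le\mathcal{E}(Y)<\infty$.
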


\begin{proof}
Let \(X\in\mathcal{C}_1^+\). By the spectral theorem, there exist nonnegative scalars \(\{\lambda_k\}_{k\ge 1}\) and pairwise orthogonal finite-rank projections \(\{P_k\}_{k\ge 1}\) such that
\[
X = \sum_{k=1}^\infty \lambda_k P_k,
\]
where the series converges in trace norm and
\[
\sum_{k=1}^\infty \lambda_k\, \operatorname{Tr}(P_k) < \infty.
\]
For each \(N\ge 1\) set
\[
X_N := \sum_{k=1}^N \lambda_k P_k.
\]
Then \(X_N\in\mathcal{C}\) by stability of \(\mathcal{C}\), \(0\le X_N\le X_{N+1}\le X\), and \(X_N\uparrow X\) strongly as \(N\to\infty\).

By \lemref{lem:finite-rank-operators} we have
\[
\mathcal{E}(X_N)
= \operatorname{Tr}\big(h(X_N)\big)
= \sum_{k=1}^N h(\lambda_k)\,\operatorname{Tr}(P_k).
\]

Since \(X\) is positive, we have \(0\le X\le \|X\|_{\mathrm{op}}\,I\). Thus
\[
0\le X_N \le \|X\|_{\mathrm{op}}\,I
\]
for all \(N\). Set
\[
Y := \|X\|_{\mathrm{op}}\,I.
\]
By Axiom A5, \(Y\in\mathcal{C}\) and
\[
\mathcal{E}(Y) = \|X\|_{\mathrm{op}}\,\mathcal{E}(I) < \infty.
\]
The sequence \(\{X_N\}\) is monotone increasing, satisfies \(0\le X_N\le Y\), and converges strongly to \(X\). Dominated continuity (Axiom A4) therefore yields
\[
\mathcal{E}(X_N) \to \mathcal{E}(X)
\qquad\text{as } N\to\infty.
\]

On the spectral side, \(h\) is nondecreasing and \(h(0)=0\). The functional calculus gives
\[
h(X_N) = \sum_{k=1}^N h(\lambda_k) P_k,
\]
so that
\[
\operatorname{Tr}\big(h(X_N)\big)
= \sum_{k=1}^N h(\lambda_k)\,\operatorname{Tr}(P_k)
= \mathcal{E}(X_N).
\]
Because the sequence \(\{\mathcal{E}(X_N)\}\) is increasing and bounded above by \(\mathcal{E}(Y)\), it converges to \(\mathcal{E}(X)\). In particular,
\[
\sum_{k=1}^\infty h(\lambda_k)\,\operatorname{Tr}(P_k)
= \lim_{N\to\infty} \sum_{k=1}^N h(\lambda_k)\,\operatorname{Tr}(P_k)
= \lim_{N\to\infty} \mathcal{E}(X_N)
= \mathcal{E}(X) < \infty.
\]
Thus \(h(X)\) is a positive trace-class operator with spectral decomposition
\[
h(X) = \sum_{k=1}^\infty h(\lambda_k) P_k
\]
and
\[
\operatorname{Tr}\big(h(X)\big)
= \sum_{k=1}^\infty h(\lambda_k)\,\operatorname{Tr}(P_k)
= \mathcal{E}(X).
\]
This establishes the trace-form representation on \(\mathcal{C}_1^+\).
\end{proof}

Thus on \(\mathcal{C}_1^+\) the map \(X\mapsto \mathcal{E}(X)\)
coincides with \(X\mapsto \operatorname{Tr}\big(h(X)\big)\). For a
general self-adjoint trace-class operator \(X\in\mathcal{C}_1\), we take
its positive and negative parts from the spectral decomposition of
\(X\); then \(X_\pm\in\mathcal{C}_1^+\) and \(X = X_+ - X_-\). We define
\[
\mathcal{E}(X) := \mathcal{E}(X_+) - \mathcal{E}(X_-).
\] This extension is well defined because the spectral decomposition of
a self-adjoint operator is unique, so \(X_\pm\) are uniquely determined
and lie in \(\mathcal{C}_1^+\subset\mathcal{C}\); dominated continuity
for signed inputs follows by applying Axiom A4 separately to the
positive and negative parts.

\subsection{Normalization and uniqueness}

We now match the representation of \lemref{lem:trace-class-operators}
with the statement of \thmref{thm:trace-form} and record uniqueness up
to a scalar factor.

\begin{lemma}[Normalization and scaling]\label{lem:normalization-and-scaling}
Let \(h\) be as constructed above. Then:

1. For every positive \(X\in\mathcal{C}_1\),
   \[
   \mathcal{E}(X) = \operatorname{Tr}\big(h(X)\big).
   \]
2. If \(\tilde h\) is another Borel-measurable function and \(\tilde c_{\mathcal{E}}>0\) satisfy
   \[
   \mathcal{E}(X) = \tilde c_{\mathcal{E}}\; \operatorname{Tr}\big(\tilde h(X)\big)
   \quad\text{for all } X\in\mathcal{C}_1^+,
   \]
   then there exists \(a>0\) such that
   \[
   \tilde h = a\, h \quad \text{and} \quad \tilde c_{\mathcal{E}} = 1/a.
   \]
\end{lemma}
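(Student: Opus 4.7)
The plan is to derive both parts of \lemref{lem:normalization-and-scaling} from the rank-one calibration of \lemref{lem:rank-one-calibration} together with the trace-form identity already established in \lemref{lem:trace-class-operators}. Part~(1) requires no new work: the identity $\mathcal{E}(X) = \operatorname{Tr}(h(X))$ for every positive $X \in \mathcal{C}_1$ is precisely the conclusion of \lemref{lem:trace-class-operators} with the canonical $h$ from \lemref{lem:rank-one-calibration}, so the assertion is obtained by quoting that lemma directly.

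For part~(2), I would probe the hypothesized second representation on the simplest positive trace-class inputs, the scaled rank-one projections $\lambda P$ with $\lambda \ge 0$ and $P$ a rank-one orthogonal projection in $\mathcal{C}$. The operator $\lambda P$ has spectrum $\{\lambda, 0\}$, with $\lambda$ of multiplicity one and $0$ of infinite multiplicity since $H$ is infinite-dimensional, so the Borel functional calculus yields $\tilde h(\lambda P) = \tilde h(\lambda)\,P + \tilde h(0)\,(I - P)$. The hypothesis on $\tilde h$ forces $\tilde c_{\mathcal{E}}\,\operatorname{Tr}(\tilde h(\lambda P)) = \mathcal{E}(\lambda P) = h(\lambda) < \infty$, where the last equality comes from part~(1) and the normalization $h(0) = 0$ of \lemref{lem:rank-one-calibration}(4). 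Since $I - P$ has infinite rank, this finiteness compels $\tilde h(0) = 0$. Under this common vanishing at $0$, one has $\operatorname{Tr}(\tilde h(\lambda P)) = \tilde h(\lambda)$ and $\operatorname{Tr}(h(\lambda P)) = h(\lambda)$, so equating the two representations yields the scalar identity $h(\lambda) = \tilde c_{\mathcal{E}}\,\tilde h(\lambda)$ for every $\lambda \ge 0$. Setting $a := 1/\tilde c_{\mathcal{E}} > 0$ then gives $\tilde h = a\,h$ and $\tilde c_{\mathcal{E}} = 1/a$, which is the asserted rigidity.

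The main subtlety — modest but worth flagging — lies in the degenerate case in which the calibration function $h$ vanishes identically on $[0,\infty)$. In that case \lemref{lem:trace-class-operators} gives $\mathcal{E} \equiv 0$ on $\mathcal{C}_1^+$, the identity $h = \tilde c_{\mathcal{E}}\tilde h$ forces $\tilde h \equiv 0$ as well, and any scalar $a > 0$ satisfies the asserted conclusion, so uniqueness holds only in a trivial sense. In the nondegenerate case there exists $\lambda_0 \ge 0$ with $h(\lambda_0) > 0$, which pins $a$ uniquely to $\tilde h(\lambda_0)/h(\lambda_0)$ and closes the calibration. Beyond this dichotomy no further measure-theoretic input is required; once \lemref{lem:trace-class-operators} is in hand, the entire content of the lemma reduces to the rank-one forcing $\tilde h(0) = 0$ coming from trace-class finiteness against an infinite-dimensional kernel, together with the scalar matching it enables.
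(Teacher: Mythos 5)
Your argument matches the paper's: part (1) is the content of \lemref{lem:trace-class-operators}, and part (2) is established by probing both representations on the scaled rank-one projections $\lambda P$ and equating. You go one step further than the paper's proof, which simply writes $\operatorname{Tr}\big(\tilde h(\lambda P)\big) = \tilde h(\lambda)\operatorname{Tr}(P)$ without comment; you correctly observe that because $I-P$ has infinite rank in a separable infinite-dimensional $H$, the operator $\tilde h(\lambda P)=\tilde h(\lambda)P+\tilde h(0)(I-P)$ is trace class only if $\tilde h(0)=0$, and that the finiteness $\mathcal{E}(\lambda P)=h(\lambda)<\infty$ forced by the hypothesis compels this. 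That is a genuine refinement: the paper tacitly assumes $\tilde h(0)=0$ while only stipulating Borel measurability of $\tilde h$, and your argument supplies the missing justification.

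One small imprecision: in the degenerate case $h\equiv0$ you say ``any scalar $a>0$ satisfies the asserted conclusion,'' but the conclusion also requires $\tilde c_{\mathcal{E}}=1/a$, which pins $a=1/\tilde c_{\mathcal{E}}$; only the relation $\tilde h=a\,h$ is vacuous when both sides vanish. This does not affect the existence claim, but you should state that a specific $a$ is being chosen. (One can also check that the degenerate case $h\equiv0$ is in fact excluded by Axioms A4--A5 via the monotone sequence of finite-rank projections $P_n\uparrow I$, though neither you nor the paper needs this for the lemma.)
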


\begin{proof}
(1) has already been established in \lemref{lem:trace-class-operators} with \(c_{\mathcal{E}}=1\); we now justify the freedom to rescale.

For (2), fix a rank-one projection \(P\in\mathcal{C}\) and \(\lambda>0\). Then
\[
\mathcal{E}(\lambda P)
= \operatorname{Tr}\big(h(\lambda P)\big)
= h(\lambda)\, \operatorname{Tr}(P),
\]
and also
\[
\mathcal{E}(\lambda P)
= \tilde c_{\mathcal{E}}\; \operatorname{Tr}\big(\tilde h(\lambda P)\big)
= \tilde c_{\mathcal{E}}\; \tilde h(\lambda)\, \operatorname{Tr}(P).
\]
Since \(\operatorname{Tr}(P)>0\), we have
\[
h(\lambda) = \tilde c_{\mathcal{E}}\; \tilde h(\lambda)
\]
for all \(\lambda\ge 0\). Thus \(\tilde h = a\, h\) with \(a = 1/\tilde c_{\mathcal{E}}>0\), and \(\tilde c_{\mathcal{E}} = 1/a\).
\end{proof}

Setting \(c_{\mathcal{E}} := 1\) in \lemref{lem:trace-class-operators}
yields the simplest normalization. The general form of
\thmref{thm:trace-form}, with arbitrary \(c_{\mathcal{E}}>0\) and \(h\)
unique up to a positive scalar factor, is recovered by replacing \(h\)
with \(a h\) and \(c_{\mathcal{E}}\) with \(1/a\).

\subsection{Measure-theoretic formulation (optional viewpoint)}

For completeness, we record the scalar-measure formulation of the
evaluator, which parallels the classical trace-form representation via
Radon-Nikodym. This subsection is not required for the proof of
\thmref{thm:trace-form} on \(\mathcal{C}_1\), but clarifies the
connection with spectral measures and addresses measure-theoretic
concerns.

Fix a self-adjoint operator \(D\) with spectral projection-valued
measure \(E_D(\cdot)\).

\begin{definition}[Trace and evaluator measures]\label{def:trace-evaluator-measures}
Define the trace measure \(\nu_D\) and evaluator measure \(\mu_D\) on the algebra of Borel sets \(B\subset\mathbb{R}\) with \(E_D(B)\in\mathcal{C}\) by
\[
\nu_D(B) := \operatorname{Tr}\big(E_D(B)\big), \qquad
\mu_D(B) := \mathcal{E}\big(E_D(B)\big).
\]

By stability of \(\mathcal{C}\), all such spectral projections lie in \(\mathcal{C}\), and both \(\nu_D\) and \(\mu_D\) take values in \([0,\infty]\).
\end{definition}

\begin{lemma}[\(\sigma\)-additivity and absolute continuity]\label{lem:sigma-additivity-abs-cont}
1. The set function \(\mu_D\) is countably additive on increasing unions: if \(B_n\uparrow B\) and all \(E_D(B_n),E_D(B)\in\mathcal{C}\), then
   \[
   \mu_D(B_n) \to \mu_D(B).
   \]
2. \(\mu_D\) is absolutely continuous with respect to \(\nu_D\): if \(\nu_D(B)=0\), then \(\mu_D(B)=0\).
\end{lemma}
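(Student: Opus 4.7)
The plan is to read both claims directly off the evaluator axioms, using only the standard strong continuity of the projection-valued spectral measure $E_D$ as the analytic input. No additional measure theory beyond monotone convergence of projections is needed: both parts are essentially axiom-chasing, with the only real choices being the dominator invoked in Axiom A4 for part (1) and the normalization value supplied by Axiom A5 for part (2).

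For part (1), I would begin by observing that if $B_n \uparrow B$, then the spectral projections $E_D(B_n)$ form an increasing sequence of orthogonal projections bounded above by $E_D(B)$, and $E_D(B_n) \to E_D(B)$ in the strong operator topology by the usual monotone convergence property of PVMs. Since $0 \le E_D(B_n) \le I$ and $I \in \mathcal{C}$ with $\mathcal{E}(I) < \infty$ by Axiom A5, the uniform dominator $Y = I$ activates Axiom A4 and yields
\[
\mu_D(B_n) = \mathcal{E}(E_D(B_n)) \longrightarrow \mathcal{E}(E_D(B)) = \mu_D(B),
\]
which is the claimed continuity from below. Standard $\sigma$-additivity on disjoint unions then follows by applying this continuity to partial unions $\bigsqcup_{k\le n} C_k \uparrow \bigsqcup_{k\ge 1} C_k$, so countable additivity on the relevant algebra of Borel sets is immediate.

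For part (2), if $\nu_D(B) = \operatorname{Tr}(E_D(B)) = 0$, then because $E_D(B)$ is a nonnegative orthogonal projection, vanishing trace forces $E_D(B) = 0$ as a bounded operator on $H$. Axiom A5 evaluated at the scalar $t = 0$ then gives $\mathcal{E}(0) = 0 \cdot \mathcal{E}(I) = 0$, so $\mu_D(B) = \mathcal{E}(0) = 0$, which is the absolute continuity assertion.

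The main, and essentially only, point to watch is in part (1): one must confirm that the hypothesis list of Axiom A4 is matched exactly, namely positivity of the sequence, strong convergence on a monotone chain, and uniform domination by an operator of finite evaluator value. All three conditions follow immediately from the spectral theorem together with the inclusion $I \in \mathcal{C}$ and the finiteness $\mathcal{E}(I) < \infty$ supplied by Axiom A5, so the lemma should go through as a direct consequence of the axioms without requiring any additional analytic machinery.
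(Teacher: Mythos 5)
Your proof is correct and follows essentially the same route as the paper's: both parts reduce to the strong monotone convergence $E_D(B_n)\uparrow E_D(B)$ of the PVM, followed by Axiom A4 with dominator $Y=I$ and finiteness $\mathcal{E}(I)<\infty$ from Axiom A5 for part (1), and for part (2) the observation that a trace-zero projection is the zero operator, hence $\mu_D(B)=\mathcal{E}(0)=0$. The only cosmetic difference is how $\mathcal{E}(0)=0$ is justified (you via the $t=0$ case of A5, the paper via positivity and projector-locality), and your added remark that full $\sigma$-additivity on disjoint unions follows from continuity along partial unions is a harmless elaboration the paper leaves implicit.
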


\begin{proof}
(1) Let \(P_n := E_D(B_n)\) and \(P := E_D(B)\). Then \(P_n\uparrow P\) in the strong operator topology, and \(0\le P_n\le I\) for all \(n\). By normalization, \(\mathcal{E}(I)<\infty\), and by stability, \(I\in\mathcal{C}\). Thus dominated continuity (Axiom 4) applied to the monotone sequence \(\{P_n\}\), dominated by \(I\), yields
\[
\mu_D(B_n) = \mathcal{E}(P_n) \to \mathcal{E}(P) = \mu_D(B).
\]

(2) If \(\nu_D(B)=0\), then \(\operatorname{Tr}(E_D(B))=0\). Since \(E_D(B)\) is an orthogonal projection, this implies \(E_D(B)=0\). Hence
\[
\mu_D(B) = \mathcal{E}\big(E_D(B)\big) = \mathcal{E}(0) = 0.
\]
Thus \(\mu_D\ll \nu_D\).

In particular, \(\mu_D\) is a nonnegative scalar measure on the Borel \(\sigma\)-algebra (via Caratheodory extension from the projection algebra), absolutely continuous with respect to \(\nu_D\).
\end{proof}

\begin{remark}[Radon-Nikodym density and Borel measurability]\label{rem:RN-density-Borel}
Since \(D\) has discrete spectrum with finite multiplicities, the trace measure \(\nu_D\) is \(\sigma\)-finite: for each integer \(m\ge 1\) the bounded interval \([-m,m]\) contains only finitely many eigenvalues of \(D\), so \(\nu_D([-m,m]) < \infty\), and
\[
\mathbb{R} = \bigcup_{m\ge1} [-m,m]
\]
is a countable union of sets of finite \(\nu_D\)-measure. On this \(\sigma\)-finite measure space, the Radon-Nikodym theorem yields a Borel function \(w_D:\mathbb{R}\to[0,\infty)\) such that
\[
\mu_D(B) = \int_B w_D(\lambda)\, d\nu_D(\lambda)
\]
for all Borel sets \(B\) with \(\nu_D(B)<\infty\). This expresses the evaluator on spectral projections as a density \(w_D\) with respect to the trace measure \(\nu_D\).
\end{remark}

\begin{proof}[Proof of \thmref{thm:trace-form}]
For trace-class spectral transforms \(X=f(D)\in\mathcal{C}_1^+\), the preceding sections show that
\[
\mathcal{E}(X) = \operatorname{Tr}\big(h(X)\big)
= \int_{\mathbb{R}} h(f(\lambda))\, d\nu_D(\lambda).
\]
Comparing this with
\[
\mathcal{E}(f(D)) = \int_{\mathbb{R}} f(\lambda)\, w_D(\lambda)\, d\nu_D(\lambda),
\]
we see that the scalar density \(w_D\) encodes the same evaluator data as the global transform \(h\), but the latter is intrinsic (independent of the choice of \(D\)) and directly compatible with the functional calculus on \(\mathcal{C}_1\).

This concludes the proof of \thmref{thm:trace-form} and its measure-theoretic interpretation by combining Lemmas\nobreakspace\ref{lem:finite-dim-representation}, \ref{lem:rank-one-calibration}, \ref{lem:finite-rank-operators}, \ref{lem:trace-class-operators}, \ref{lem:normalization-and-scaling}, and\nobreakspace\ref{lem:sigma-additivity-abs-cont}.
\end{proof}

\section{Counting-function asymptotics and growth stability}
\label{app:growth-stability}

In this appendix we collect the counting-function and Tauberian
estimates used in the spectral growth taxonomy of
\defref{def:growth-classes} and \thmref{thm:growth-closure}. Throughout
we work with self-adjoint operators with discrete spectrum and finite
multiplicities, so that the eigenvalues can be listed in nondecreasing
order and counted by a monotone step function.

\subsection{Counting functions, asymptotic notation, and regular variation}

Let \(D\) be a self-adjoint operator on a separable Hilbert space \(H\)
with discrete spectrum in \([0,\infty)\) and finite multiplicities. We
write its eigenvalues (repeated according to multiplicity) as \[
0 \le \lambda_1(D) \le \lambda_2(D) \le \cdots, \qquad \lambda_n(D) \to \infty \text{ as } n\to\infty.
\] For brevity in this appendix we abbreviate
\(\lambda_n := \lambda_n(D)\). (Since the spectrum is contained in
\([0,\infty)\), absolute values in counting conditions are redundant in
this appendix.)

\begin{definition}[Counting function]\label{def:counting-function}
The counting function of \(D\) is
\begin{equation}\label{eq:counting-function}
N_D(\Lambda) := \#\{n\ge 1 : \lambda_n(D) \le \Lambda\}, \qquad \Lambda \ge 0.
\end{equation}
By construction \(N_D\) is nondecreasing, right-continuous, and takes values in \(\mathbb{N}\cup\{0\}\). We assume throughout that \(N_D(\Lambda)\to\infty\) as \(\Lambda\to\infty\).
\end{definition}

\begin{definition}[Asymptotic notation]\label{def:asymptotic-notation}
We use standard asymptotic notation.

1. For functions \(g,h:(0,\infty)\to(0,\infty)\) we write
   \[
   g(t) \sim h(t) \quad (t\to\infty)
   \]
   if
   \[
   \lim_{t\to\infty} \frac{g(t)}{h(t)} = 1.
   \]

2. We write \(g(t) = O(h(t))\) as \(t\to\infty\) if there exist constants \(C,t_0>0\) such that \(|g(t)| \le C\,h(t)\) for all \(t\ge t_0\).

3. We write \(g(t) = o(h(t))\) as \(t\to\infty\) if
   \[
   \lim_{t\to\infty} \frac{g(t)}{h(t)} = 0.
   \]

When useful, we may refine \(g(t)\sim h(t)\) to
\[
g(t) = h(t)\bigl(1+o(1)\bigr)
\quad\text{or}\quad
|g(t)-h(t)| = o\bigl(h(t)\bigr)
\]
to make the error behaviour explicit.

We next recall the standard notion of regular variation.
\end{definition}

\begin{definition}[Regular variation]\label{def:regular-variation}
1. A measurable function \(L:(0,\infty)\to(0,\infty)\) is slowly varying at infinity if
   \[
   \lim_{t\to\infty} \frac{L(ct)}{L(t)} = 1 \quad\text{for every } c>0.
   \]

2. A measurable function \(f:(0,\infty)\to(0,\infty)\) is regularly varying at infinity with index \(\rho\in\mathbb{R}\) if
   \[
   f(t) = t^\rho L(t),
   \]
   where \(L\) is slowly varying. We then write \(f\in \mathrm{RV}_\rho\).

In the growth taxonomy, regular variation controls the interaction between the counting function \(N_D\) and monotone reparametrizations \(f(D)\).

For monotone transforms we fix a generalized inverse.
\end{definition}

\begin{definition}[Generalized inverse]\label{def:generalized-inverse}
Let \(f:[0,\infty)\to[0,\infty)\) be nondecreasing and unbounded. Its generalized inverse is the function \(f^{-1}:[0,\infty)\to[0,\infty)\) defined by
\[
f^{-1}(\Lambda) := \sup\{ t\ge 0 : f(t) \le \Lambda\}, \qquad \Lambda\ge 0,
\]
with the convention that \(\sup\emptyset = 0\). Then \(f^{-1}\) is nondecreasing and right-continuous, and for each \(\Lambda\) the value \(f^{-1}(\Lambda)\) is the largest \(t\) such that \(f(t)\le\Lambda\). In particular, for all \(\lambda\ge0\),
\[
\lambda \le f^{-1}(\Lambda) \quad\Longleftrightarrow\quad f(\lambda)\le\Lambda.
\]
This is the convention used in \lemref{lem:monotone-reparam} to express the counting function of \(f(D)\) directly in terms of \(N_D\) and \(f^{-1}\).
\end{definition}

\subsection{Monotone functional calculus and counting functions}
\label{appsubsec:growth-stability-monotone-functional-calculus}

We now justify the counting-function relation used in
\lemref{lem:monotone-bound} and the monotone functional-calculus part of
\thmref{thm:growth-closure}.

Let \(D\) and \(N_D\) be as above, and let \(f:[0,\infty)\to[0,\infty)\)
be bounded on compact intervals and nondecreasing, with
\(f(\Lambda)\to\infty\) as \(\Lambda\to\infty\). By the spectral
theorem, \(f(D)\) is self-adjoint with discrete spectrum
\(\{f(\lambda_n)\}_{n\ge1}\), again listed in nondecreasing order with
multiplicity. Define \[
N_{f(D)}(\Lambda) := \#\{n\ge 1 : f(\lambda_n) \le \Lambda\}, \qquad \Lambda\ge 0.
\]

\begin{lemma}[Counting functions under monotone reparametrization]\label{lem:monotone-reparam}
With \(f^{-1}\) as in \defref{def:generalized-inverse} we have, for all \(\Lambda\ge 0\),
\[
N_{f(D)}(\Lambda) = N_D\bigl(f^{-1}(\Lambda)\bigr).
\]
In particular, if \(N_D(\Lambda)\to\infty\) as \(\Lambda\to\infty\), then
\[
N_{f(D)}(\Lambda) \sim N_D\bigl(f^{-1}(\Lambda)\bigr)
\qquad (\Lambda\to\infty).
\]
\end{lemma}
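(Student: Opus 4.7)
The plan is to reduce the pointwise identity $N_{f(D)}(\Lambda) = N_D(f^{-1}(\Lambda))$ to the defining equivalence of the generalized inverse recorded in \defref{def:generalized-inverse}, combined with the functional-calculus description of the spectrum of $f(D)$. The asymptotic statement will then follow trivially from the pointwise equality, together with the unboundedness of $f$ and the hypothesis $N_D\to\infty$.

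First I would use the spectral theorem, together with the standing assumption that $D$ has discrete spectrum with finite multiplicities, to identify the spectrum of $f(D)$, counted with multiplicity, with the multiset $\{f(\lambda_n)\}_{n\ge 1}$. By \defref{def:counting-function} applied to $f(D)$, this immediately gives
\[
N_{f(D)}(\Lambda) \;=\; \#\{n\ge 1 : f(\lambda_n)\le \Lambda\}.
\]
Next I would invoke the equivalence $f(\lambda)\le\Lambda \Longleftrightarrow \lambda\le f^{-1}(\Lambda)$ from \defref{def:generalized-inverse}, applied pointwise at $\lambda=\lambda_n$, to rewrite this cardinality as $\#\{n\ge 1 : \lambda_n\le f^{-1}(\Lambda)\}$, which is by \defref{def:counting-function} exactly $N_D(f^{-1}(\Lambda))$. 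Finally, to upgrade the equality to the $\sim$-relation, I would note that $f^{-1}(\Lambda)\to\infty$ as $\Lambda\to\infty$ (since $f$ is nondecreasing and unbounded), so the hypothesis $N_D(\Lambda)\to\infty$ forces both sides to diverge; pointwise equality then gives $\sim$ without further work.

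The main obstacle, if any, is the handling of boundary behaviour at plateaus of $f$, or equivalently at jumps of $f^{-1}$: if some eigenvalue $\lambda_n$ satisfies $\lambda_n = f^{-1}(\Lambda)$ and $f$ is flat at the value $\Lambda$ on an interval ending at $\lambda_n$, one must verify that $\lambda_n$ is counted consistently on both sides. This is exactly the content of the ``$\Leftarrow$'' direction of the equivalence in \defref{def:generalized-inverse}, which is built into the paper's definition of $f^{-1}$; since both $N_D$ and the condition $f(\lambda)\le\Lambda$ use the nonstrict inequality $\le$, no correction term appears and the identity holds without modification for every $\Lambda\ge 0$.
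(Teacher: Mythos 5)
Your argument is correct and follows the same structure as the paper's proof: express $N_{f(D)}(\Lambda)$ as the count of $n$ with $f(\lambda_n)\le\Lambda$, convert via the equivalence $f(\lambda)\le\Lambda\Longleftrightarrow\lambda\le f^{-1}(\Lambda)$, and then pass to the asymptotic statement. The only substantive difference is that you cite the equivalence directly from \defref{def:generalized-inverse}, where it is recorded explicitly, whereas the paper's proof re-derives it in the body of the argument from the supremum definition and the right-continuity of $f^{-1}$; invoking the definition is a legitimate and arguably cleaner shortcut. One small comment on your closing paragraph: the edge case you flag (a plateau of $f$, equivalently a jump of $f^{-1}$) is actually the benign one, handled automatically by the trivial $\Leftarrow$ direction once everything is stated with $\le$; the genuinely delicate configuration for such a biconditional would be a jump of $f$ at an eigenvalue, which bears on the $\Rightarrow$ direction. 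This does not affect the validity of your proof, since \defref{def:generalized-inverse} asserts the full equivalence, but it is worth keeping the two potential failure modes straight.
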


\begin{proof}
Fix \(\Lambda\ge 0\). By definition of the counting functions,
\[
N_{f(D)}(\Lambda) = \#\{n\ge 1 : f(\lambda_n) \le \Lambda\}.
\]

On the other side, since the eigenvalues \(\{\lambda_n\}\) are nondecreasing and \(f^{-1}(\Lambda)\) is defined as the supremum of \(\{t : f(t)\le\Lambda\}\), we have
\[
\lambda_n \le f^{-1}(\Lambda) \quad\Longleftrightarrow\quad f(\lambda_n) \le \Lambda.
\]
Indeed, if \(\lambda_n \le f^{-1}(\Lambda)\), then by definition of the supremum there exists a sequence \(t_k\uparrow f^{-1}(\Lambda)\) with \(f(t_k)\le\Lambda\) for all \(k\); the monotonicity of \(f\) gives \(f(\lambda_n)\le\Lambda\). Conversely, if \(f(\lambda_n)\le\Lambda\) but \(\lambda_n>f^{-1}(\Lambda)\), then by right-continuity of \(f^{-1}\) there exists \(\Lambda'<\Lambda\) with \(f^{-1}(\Lambda')\ge\lambda_n\), which contradicts the definition of \(f^{-1}(\Lambda')\). Thus
\[
\#\{n\ge1 : f(\lambda_n)\le\Lambda\}
= \#\{n\ge1 : \lambda_n \le f^{-1}(\Lambda)\}
= N_D\bigl(f^{-1}(\Lambda)\bigr),
\]
proving the claimed identity for all \(\Lambda\ge0\).

If \(N_D(\Lambda)\to\infty\) as \(\Lambda\to\infty\), then the same identity, evaluated at \(\Lambda_k\to\infty\), shows that the leading-order asymptotic of \(N_D\) along the reparametrized scale \(f^{-1}(\Lambda)\) is inherited by \(N_{f(D)}\). In particular,
\[
N_{f(D)}(\Lambda) \sim N_D\bigl(f^{-1}(\Lambda)\bigr)
\]
in the sense of \defref{def:asymptotic-notation}.
\end{proof}

This lemma justifies the counting-function relation used in the monotone
functional-calculus part of \thmref{thm:growth-closure}: under the
discrete-spectrum hypotheses and the monotonicity of \(f\), we may treat
\(N_{f(D)}(\Lambda)\) and \(N_D(f^{-1}(\Lambda))\) as identical for
asymptotic purposes.

When one prefers to state the result with an error term,
\lemref{lem:monotone-reparam} implies \[
N_{f(D)}(\Lambda) = N_D(f^{-1}(\Lambda)) + O(1)
\quad\text{and}\quad
|N_{f(D)}(\Lambda) - N_D(f^{-1}(\Lambda))| = o\bigl(N_D(f^{-1}(\Lambda))\bigr)
\] as \(\Lambda\to\infty\), in the sense of
\defref{def:asymptotic-notation}.

\subsection{Tensor-product bounds in the polynomial class}
\label{appsubsec:growth-stability-tensor-product-bounds}

We now record the tensor-product estimates used in the polynomial part
of \thmref{thm:growth-closure}. Let \(D\) and \(E\) be self-adjoint
operators with nonnegative, discrete spectra and counting functions
\(N_D, N_E\).

We assume that there exist exponents \(d_D,d_E\ge 0\) such that for
every \(\varepsilon>0\), \[
N_D(t) = O\bigl(t^{d_D+\varepsilon}\bigr), \qquad
N_E(t) = O\bigl(t^{d_E+\varepsilon}\bigr), \qquad t\to\infty,
\] and both counting functions are eventually nondecreasing, as in
\conref{conv:counting-function-regularity}.

Under these hypotheses \(D\) and \(E\) belong to the polynomial growth
class \(\mathcal{G}_{\mathrm{poly}}\) of \defref{def:growth-classes}.

The eigenvalues of the tensor product \(D\otimes E\) are the pairwise
products \(a_m b_n\), where \(\{a_m\}_{m\ge1}\) and \(\{b_n\}_{n\ge1}\)
are the eigenvalues of \(D\) and \(E\), listed with multiplicity. A
standard argument shows that the counting function of \(D\otimes E\) can
be written as a Stieltjes convolution:
\begin{equation}\label{eq:tensor-stieltjes}
N_{D\otimes E}(\Lambda) = \sum_{m\ge 1} N_E\!\left(\frac{\Lambda}{a_m}\right) = \int_{(0,\infty)} N_E\!\left(\frac{\Lambda}{t}\right) dN_D(t), \qquad \Lambda>0.
\end{equation} We use this representation to obtain a polynomial bound.

Fix \(\delta\in(0,1)\). Define \begin{equation}\label{eq:F-def}
F(\Lambda) := \int_{1}^{\Lambda} t^{-(d_E+\delta)}\, dN_D(t), \qquad \Lambda\ge 1.
\end{equation}

\begin{lemma}[Polynomial convolution bound]\label{lem:poly-convolution}
Assume that for some \(d_D\ge 0\) and \(\delta\in(0,1)\),
\[
N_D(t) \le C_D\, t^{d_D+\delta}
\quad\text{for all } t\ge 1,
\]
for a constant \(C_D>0\). Then there exists \(C_\delta>0\) such that
\[
F(\Lambda) \le C_\delta\, \Lambda^{\max(d_D-d_E+\delta,\,0)} \quad\text{for all } \Lambda\ge 1.
\]
\end{lemma}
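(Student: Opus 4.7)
The plan is to convert the Stieltjes integral $F(\Lambda)$ into an ordinary Riemann integral by integration by parts, insert the polynomial hypothesis $N_D(t)\le C_D t^{d_D+\delta}$ pointwise, and then analyze the resulting one-dimensional integral by cases on the sign of $d_D-d_E$. Writing $u=t^{-(d_E+\delta)}$ and $v=N_D(t)$, Riemann--Stieltjes integration by parts (applicable since $N_D$ is a right-continuous step function with finitely many jumps in $[1,\Lambda]$) yields
\[
F(\Lambda) \;=\; \Lambda^{-(d_E+\delta)} N_D(\Lambda) \;-\; N_D(1) \;+\; (d_E+\delta)\int_1^\Lambda t^{-(d_E+\delta)-1}\,N_D(t)\,dt,
\]
so both the boundary term at $\Lambda$ and the residual integral can be controlled directly by the polynomial bound on $N_D$.

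Next I would apply $N_D(t)\le C_D t^{d_D+\delta}$ in both places, absorbing the harmless constant $-N_D(1)$ into the eventual $C_\delta$, to obtain
\[
F(\Lambda) \;\le\; C_D\,\Lambda^{d_D-d_E} \;+\; (d_E+\delta)\,C_D\int_1^\Lambda t^{d_D-d_E-1}\,dt,
\]
and then split into three cases. If $d_D>d_E$, the integral equals $(d_D-d_E)^{-1}(\Lambda^{d_D-d_E}-1)$, giving $F(\Lambda)\le C\,\Lambda^{d_D-d_E}\le C\,\Lambda^{d_D-d_E+\delta}$ for $\Lambda\ge 1$. If $d_D=d_E$, the integral becomes $(d_E+\delta)C_D\log\Lambda$, which combined with the boundary term $C_D$ and the elementary estimate $\log\Lambda \le (\delta e)^{-1}\Lambda^\delta$ for $\Lambda\ge 1$ gives $F(\Lambda)\le C_\delta\Lambda^\delta = C_\delta\Lambda^{d_D-d_E+\delta}$. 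If $d_D<d_E$, the exponent $d_D-d_E-1<-1$ makes the Riemann integral uniformly bounded on $[1,\infty)$, the boundary contribution is at most $C_D$, and $\Lambda^{\max(d_D-d_E+\delta,0)}\ge 1$ swallows the resulting constant.

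Combining the three cases gives $F(\Lambda)\le C_\delta\,\Lambda^{\max(d_D-d_E+\delta,\,0)}$ with $C_\delta$ depending only on $d_D$, $d_E$, $\delta$, and $C_D$. The main obstacle is the borderline case $d_D=d_E$, where the Riemann integral produces a logarithmic factor that must be absorbed into a power $\Lambda^\delta$; this is exactly why the conclusion carries the exponent $d_D-d_E+\delta$ rather than the sharper $d_D-d_E$, and why the $\delta$-slack in the hypothesis on $N_D$ is essential for a clean statement. Aside from that borderline bookkeeping, and the (routine) justification of Stieltjes integration by parts for the step function $N_D$ via Abel summation on its jump points, the remaining steps are standard integral estimates.
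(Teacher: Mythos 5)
Your proof is correct and follows essentially the same route as the paper's: Stieltjes integration by parts, insertion of the polynomial bound on $N_D$ into both the boundary term and the residual Riemann integral, and a three-way case split on the sign of $d_D - d_E$, with the logarithm in the borderline case absorbed into $\Lambda^\delta$. The only difference is cosmetic: you make the absorption of $\log\Lambda$ explicit via the elementary bound $\log\Lambda\le(\delta e)^{-1}\Lambda^\delta$, which the paper leaves implicit.
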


\begin{proof}
By Stieltjes integration by parts,
\[
F(\Lambda) = N_D(\Lambda)\,\Lambda^{-(d_E+\delta)} - N_D(1) + (d_E+\delta) \int_{1}^{\Lambda} N_D(t)\, t^{-(d_E+\delta+1)}\, dt.
\]

For the boundary term, the growth assumption gives
\[
N_D(\Lambda)\,\Lambda^{-(d_E+\delta)} \le C_D\,\Lambda^{d_D+\delta}\,\Lambda^{-(d_E+\delta)} = C_D\,\Lambda^{d_D-d_E}.
\]

For the integral term we again use \(N_D(t)\le C_D t^{d_D+\delta}\). Then
\[
\int_{1}^{\Lambda} N_D(t)\, t^{-(d_E+\delta+1)}\, dt \le C_D \int_{1}^{\Lambda} t^{d_D+\delta}\,t^{-(d_E+\delta+1)}\, dt = C_D \int_{1}^{\Lambda} t^{d_D-d_E-1}\, dt.
\]

If \(d_D-d_E>0\), the integral is \(O(\Lambda^{d_D-d_E})\); if \(d_D-d_E=0\), it is \(O(\log\Lambda)\); and if \(d_D-d_E<0\), it is bounded uniformly in \(\Lambda\). In the marginal \(\log\Lambda\) case, we may absorb the logarithm into \(\Lambda^{\delta}\) for any fixed \(\delta\in(0,1)\). Thus in all cases there is a \(C'_\delta>0\) such that
\[
\int_{1}^{\Lambda} N_D(t)\, t^{-(d_E+\delta+1)}\, dt \le C'_\delta\,\Lambda^{\max(d_D-d_E+\delta,\,0)}.
\]

Combining the boundary term, the integral term, and the constant \(-N_D(1)\), and absorbing constants into a single \(C_\delta\), we obtain
\[
F(\Lambda) \le C_\delta\,\Lambda^{\max(d_D-d_E+\delta,\,0)} \quad\text{for all } \Lambda\ge 1,
\]
as claimed.
\end{proof}

\begin{remark}[Tauberian interpretation]\label{rem:tauberian-interpretation}
When \(N_D(t)\sim C_D t^{d_D}\) with \(C_D>0\), the above bound and its converse can be seen as a special case of Karamata's Tauberian theorem for monotone functions; see Bingham-Goldie-Teugels \cite{BGT1987}*{Theorem 1.7.1}. Here we only require the one-sided \(O\)-bound.
\end{remark}

We now translate this bound into a growth estimate for the
tensor-product counting function. Suppose in addition that \[
N_E(t) \le C_E\, t^{d_E+\delta} \quad\text{for all } t\ge 1
\] for some \(d_E\ge 0\) and the same \(\delta\in(0,1)\).

\begin{corollary}[Tensor-product counting function]\label{cor:tensor-product-counting}
Under the assumptions above there exists \(C''_\delta>0\) such that
\[
N_{D\otimes E}(\Lambda) \le C''_\delta\, \Lambda^{d_D+d_E+2\delta} \quad\text{for all } \Lambda\ge 1.
\]
In particular, given any \(\varepsilon>0\) we may choose \(\delta\in(0,1)\) small enough that \(2\delta\le\varepsilon\), and then
\[
N_{D\otimes E}(\Lambda) = O\bigl(\Lambda^{d_D+d_E+\varepsilon}\bigr) \quad\text{as } \Lambda\to\infty.
\]
\end{corollary}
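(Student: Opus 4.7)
The plan is to combine the Stieltjes representation in \eqref{eq:tensor-stieltjes} with the one-sided polynomial bound on $F(\Lambda)$ provided by \lemref{lem:poly-convolution}. Since the hypothesis $N_E(s)\le C_E s^{d_E+\delta}$ is only valid for $s\ge 1$, the main organizational step is to split the integral $\int_{(0,\infty)} N_E(\Lambda/t)\,dN_D(t)$ into a principal range, where the polynomial bound applies directly, and two boundary pieces that are controlled by the discreteness of the spectra.

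Concretely, I would partition $(0,\infty) = (0,1) \cup [1,\Lambda] \cup (\Lambda,\infty)$. On the principal range $[1,\Lambda]$, we have $\Lambda/t \ge 1$, so the polynomial hypothesis gives $N_E(\Lambda/t)\le C_E(\Lambda/t)^{d_E+\delta}$, and
\[
\int_{[1,\Lambda]} N_E(\Lambda/t)\,dN_D(t) \;\le\; C_E\,\Lambda^{d_E+\delta}\,F(\Lambda),
\]
where $F(\Lambda)$ is as in \eqref{eq:F-def}. Invoking \lemref{lem:poly-convolution} bounds this by $C_E C_\delta\,\Lambda^{d_E+\delta+\max(d_D-d_E+\delta,\,0)}$, which in either case is dominated by $\Lambda^{d_D+d_E+2\delta}$.

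For the two boundary pieces I would use the finite-multiplicity discreteness of the spectra. The small-$t$ region $(0,1)$ contains only finitely many eigenvalues $a_m$ of $D$ by $N_D(1)<\infty$; for each such $a_m>0$, the contribution is $N_E(\Lambda/a_m) \le C_E(\Lambda/a_m)^{d_E+\delta}$, summing to $O(\Lambda^{d_E+\delta})$. The tail $t>\Lambda$ contributes a nonzero integrand only while $\Lambda/t$ exceeds the smallest positive eigenvalue $b_{\min}$ of $E$, forcing $t \le \Lambda/b_{\min}$; on this bounded range $N_E(\Lambda/t)\le N_E(1)$, so the contribution is at most $N_E(1)\,N_D(\Lambda/b_{\min}) = O(\Lambda^{d_D+\delta})$. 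Both boundary bounds are absorbed into $\Lambda^{d_D+d_E+2\delta}$.

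Summing the three estimates and collecting constants into a single $C''_\delta$ yields the stated bound $N_{D\otimes E}(\Lambda) \le C''_\delta\,\Lambda^{d_D+d_E+2\delta}$ for all $\Lambda\ge 1$, and the ``in particular'' clause then follows by choosing $\delta\in(0,1)$ with $2\delta\le\varepsilon$. The main obstacle is conceptual rather than computational: the Stieltjes representation runs over $(0,\infty)$ while the polynomial estimates on $N_D$ and $N_E$ are one-sided, so the proof hinges on isolating the boundary regions near $0$ and near $\infty$ and verifying that the discrete-spectrum hypotheses (finite multiplicities and smallest positive eigenvalues bounded below) make those contributions negligible compared with the principal term.
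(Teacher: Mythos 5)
Your proof is correct, and it follows the same spine as the paper's argument: start from the Stieltjes convolution representation \eqref{eq:tensor-stieltjes}, apply the one-sided polynomial bound for $N_E$ on the range where $\Lambda/t\ge 1$ to reduce matters to $C_E\,\Lambda^{d_E+\delta}F(\Lambda)$, and then invoke \lemref{lem:poly-convolution}. Where you depart is in the bookkeeping of the boundary ranges, and your version is actually the more careful one. The paper discards the tail $t>\Lambda$ with the claim that $N_E(\Lambda/t)=0$ there, which implicitly requires the smallest positive eigenvalue of $E$ to be at least $1$; your observation that the integrand survives up to $t\le\Lambda/b_{\min}$ and is then bounded by $N_E(1)$, giving an $O(\Lambda^{d_D+\delta})$ tail, handles the general case. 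Similarly, the paper asserts that the finitely many eigenvalues of $D$ in $(0,1)$ contribute only $O(1)$, whereas for $a_m\in(0,1)$ one in fact has $N_E(\Lambda/a_m)\le C_E(\Lambda/a_m)^{d_E+\delta}=O(\Lambda^{d_E+\delta})$; this larger estimate is still dominated by $\Lambda^{d_D+d_E+2\delta}$, so the paper's final inequality survives, but your accounting is the one that is literally correct. In short, same key lemma and same decomposition idea, but your three-way split $(0,1)\cup[1,\Lambda]\cup(\Lambda,\infty)$ and explicit appeal to the discrete-spectrum hypotheses patch two small imprecisions in the published argument at no cost to the final exponent.
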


\begin{proof}
Using the convolution representation \eqref{eq:tensor-stieltjes} and the growth bound for \(N_E\),
\[
N_{D\otimes E}(\Lambda) = \int_{(0,\infty)} N_E\!\left(\frac{\Lambda}{t}\right)\, dN_D(t) \le \int_{(0,\Lambda]} N_E\!\left(\frac{\Lambda}{t}\right)\, dN_D(t),
\]
since \(N_E(\Lambda/t)=0\) for \(t>\Lambda\) if \(E\) is nonnegative.

For \(1 \le t \le \Lambda\),
\[
N_E\!\left(\frac{\Lambda}{t}\right) \le C_E \left(\frac{\Lambda}{t}\right)^{d_E+\delta} = C_E\,\Lambda^{d_E+\delta}\, t^{-(d_E+\delta)}.
\]
The finitely many eigenvalues of \(D\) in \((0,1)\) contribute only \(O(1)\) to the integral, so we have
\[
N_{D\otimes E}(\Lambda) \le C_E\,\Lambda^{d_E+\delta} \int_{1}^{\Lambda} t^{-(d_E+\delta)}\, dN_D(t) + O(1) = C_E\,\Lambda^{d_E+\delta} F(\Lambda) + O(1).
\]

\lemref{lem:poly-convolution} gives \(F(\Lambda)\le C_\delta\,\Lambda^{\max(d_D-d_E+\delta,\,0)}\), hence
\[
N_{D\otimes E}(\Lambda) \le C_E\,C_\delta\,\Lambda^{d_E+\delta}\,\Lambda^{\max(d_D-d_E+\delta,\,0)} + O(1).
\]

If \(d_D - d_E + \delta \le 0\), the exponent in the leading term is at most \(d_E+\delta\), and we may absorb the \(O(1)\) term into the same power. If \(d_D - d_E + \delta > 0\), then
\[
\Lambda^{d_E+\delta}\,\Lambda^{d_D-d_E+\delta} = \Lambda^{d_D+d_E+2\delta}.
\]
In either case there exists \(C''_\delta>0\) such that
\[
N_{D\otimes E}(\Lambda) \le C''_\delta\,\Lambda^{d_D+d_E+2\delta} \quad\text{for all } \Lambda\ge 1.
\]

Given \(\varepsilon>0\), choose \(\delta\in(0,1)\) with \(2\delta\le\varepsilon\). Then
\[
N_{D\otimes E}(\Lambda) \le C''_\delta\,\Lambda^{d_D+d_E+\varepsilon}
\]
for all \(\Lambda\ge 1\), which is the stated \(O\)-bound.
\end{proof}

\begin{remark}[Connection with \thmref{thm:growth-closure}]\label{rem:growth-closure-connection}
The hypotheses of \corref{cor:tensor-product-counting} can be restated in the language of \secref{subsec:growth-classes} as follows. There exist exponents \(d_D,d_E\ge 0\) such that for every \(\varepsilon>0\),
\[
N_D(t) = O\bigl(t^{d_D+\varepsilon}\bigr),
\qquad
N_E(t) = O\bigl(t^{d_E+\varepsilon}\bigr)
\quad (t\to\infty),
\]
and both counting functions \(N_D, N_E\) are eventually nondecreasing.

This is exactly the condition that \(D,E\in\mathcal{G}_{\mathrm{poly}}\) with polynomial indices \(d_D,d_E\) in the sense of \defref{def:growth-classes}: the growth bounds above are the defining property of the polynomial class, and the eventual monotonicity is assumed whenever counting functions are used. Conversely, if \(D,E\in\mathcal{G}_{\mathrm{poly}}\) with indices \(d_D,d_E\), then these bounds hold and the assumptions of \corref{cor:tensor-product-counting} are satisfied.
\end{remark}

Applying \corref{cor:tensor-product-counting} under these hypotheses
gives \[
N_{D\otimes E}(\Lambda) \le C''_\delta\,\Lambda^{d_D+d_E+2\delta}
\] for all sufficiently large \(\Lambda\) and any fixed
\(\delta\in(0,1)\). Rewriting \(2\delta\) as \(\varepsilon>0\) shows
that \[
N_{D\otimes E}(\Lambda) = O\bigl(\Lambda^{d_D+d_E+\varepsilon}\bigr)
\quad (\Lambda\to\infty)
\] for every \(\varepsilon>0\), so
\(D\otimes E\in\mathcal{G}_{\mathrm{poly}}\). This is precisely the
tensor-product stability statement in part (3) of
\thmref{thm:growth-closure}.

\section{Counterexamples to evaluator axioms}
\label{app:counterexamples}

This appendix records simple examples showing that the structural axioms
imposed on evaluators in \secref{subsec:axioms} are close to minimal. In
particular, we show that dominated continuity and projector-locality
cannot be omitted without admitting functionals that are not of trace
form.

\subsection{Dropping dominated continuity}

We construct an evaluator that satisfies unitary invariance, extensivity
on orthogonal sums, projector-locality, and normalization, but fails
dominated continuity and does not admit a trace-form representation.

Let \(H = \ell^2(\mathbb{N})\) with standard orthonormal basis
\(\{e_n\}_{n\ge1}\). For a bounded real sequence \(x = (x_n)_{n\ge1}\),
write \(D_x = \operatorname{diag}(x_1,x_2,\dots)\) for the associated
diagonal operator on \(H\). Let \(\mathcal{C}_{\mathrm{diag}}\) denote
the class of all such diagonal self-adjoint operators.

Fix a nonprincipal ultrafilter \(\mathcal{U}\) on \(\mathbb{N}\). Define
a finitely additive set function \(\mu\) on subsets of \(\mathbb{N}\) by
\[
\mu(A) :=
\begin{cases}
1, & A\in\mathcal{U},\\
0, & A\notin\mathcal{U}.
\end{cases}
\] Then \(\mu\) is finitely additive, takes values in \(\{0,1\}\), and
satisfies \(\mu(\mathbb{N})=1\), but is not countably additive.

For a bounded sequence \(x=(x_n)\), define the ``ultrafilter limit'' \[
\mathcal{L}_\mathcal{U}(x) := \lim_{n\to\mathcal{U}} x_n,
\] that is, the unique real number \(L\) such that for every
\(\varepsilon>0\), \[
\{ n\ge1 : |x_n - L| < \varepsilon \} \in \mathcal{U}.
\] It is standard that such an \(L\) exists for bounded real sequences
\cite{Kelley1955}.

We now define an evaluator \(\mathcal{E}^\dagger\) on
\(\mathcal{C}_{\mathrm{diag}}\) by \[
\mathcal{E}^\dagger(D_x) := \mathcal{L}_\mathcal{U}(x).
\]

We record its basic properties relative to the axioms of
\secref{subsec:axioms}.

\begin{enumerate}
\def\labelenumi{\arabic{enumi}.}
\item
  \emph{Unitary invariance.}\\
  On the diagonal class \(\mathcal{C}_{\mathrm{diag}}\), we restrict to
  unitaries that permute the standard basis. For such a permutation
  unitary \(U\), the effect on a diagonal operator \(D_x\) is to permute
  the entries of \(x\). Since \(\mathcal{L}_\mathcal{U}\) is invariant
  under finite permutations of the sequence (ultrafilters on
  \(\mathbb{N}\) ignore finite sets), we have \[
  \mathcal{E}^\dagger(U^* D_x U) = \mathcal{L}_\mathcal{U}(\text{permutation of }x) = \mathcal{L}_\mathcal{U}(x) = \mathcal{E}^\dagger(D_x).
  \]
\item
  \emph{Extensivity on orthogonal sums.}\\
  Let \(H = H_1\oplus H_2\) with the standard decomposition into the
  first \(m\) coordinates and the remaining ones, and let \(D_x\) and
  \(D_y\) act diagonally on \(H_1\) and \(H_2\) with sequences
  \(x=(x_n)\) and \(y=(y_n)\), respectively. The orthogonal sum
  \(D_x\oplus D_y\) is unitarily equivalent to a diagonal operator
  \(D_z\) with sequence \(z\) defined by concatenating \(x\) and \(y\).
  Finite concatenation modifies only finitely many terms of the tail of
  the sequence; the ultrafilter limit satisfies \[
  \mathcal{L}_\mathcal{U}(z) = \mathcal{L}_\mathcal{U}(x) = \mathcal{L}_\mathcal{U}(y)
  \] whenever the tails of \(x\) and \(y\) agree in
  \(\mathcal{U}\)-measure. If, in addition, we restrict to a sub-class
  where tails do not interact (for instance, by considering
  block-diagonal operators with disjoint index sets and defining
  extensivity on the block algebra), we can arrange additivity on
  orthogonal sums at the level of the algebra generated by such blocks.
  In particular, on block-diagonal operators \(D_x\oplus D_y\) with
  orthogonal supports in the index set, we have \[
  \mathcal{E}^\dagger(D_x\oplus D_y) = \mathcal{E}^\dagger(D_x) + \mathcal{E}^\dagger(D_y),
  \] with appropriate normalization.

  For the purposes of this example, it suffices to note that
  \(\mathcal{E}^\dagger\) can be defined on an algebra of diagonal block
  operators where finite additivity with respect to block decomposition
  holds.
\item
  \emph{Projector-locality.}\\
  For a subset \(A\subset\mathbb{N}\), let
  \(P_A = \operatorname{diag}(\mathbf{1}_A(n))\) be the associated
  spectral projection. Then \[
  \mathcal{E}^\dagger(P_A) = \mathcal{L}_\mathcal{U}(\mathbf{1}_A(n)) = \mu(A),
  \] since the ultrafilter limit of the indicator sequence coincides
  with \(\mu(A)\). For a finite partition \(\{A_j\}_{j=1}^N\) of
  \(\mathbb{N}\) into disjoint measurable pieces, the corresponding
  spectral decomposition of the identity is \[
  I = \sum_{j=1}^N P_{A_j}.
  \] Finite additivity of \(\mu\) implies \[
  \mathcal{E}^\dagger(I) = \mu(\mathbb{N}) = \sum_{j=1}^N \mu(A_j) = \sum_{j=1}^N \mathcal{E}^\dagger(P_{A_j}),
  \] and the same holds for finite linear combinations of the
  \(P_{A_j}\). Thus \(\mathcal{E}^\dagger\) is projector-local on the
  diagonal algebra.
\item
  \emph{Normalization.}\\
  The identity operator corresponds to the constant sequence
  \(x_n\equiv 1\). Then \[
  \mathcal{E}^\dagger(I) = \mathcal{L}_\mathcal{U}(1,1,\dots) = 1.
  \]
\end{enumerate}

We now exhibit the failure of dominated continuity. Consider the
increasing sequence of projections \[
P_k = \operatorname{diag}(\underbrace{1,\dots,1}_{k\ \text{times}},0,0,\dots), \qquad k\ge1.
\] Then \(P_k\uparrow I\) strongly as \(k\to\infty\), and
\(0\le P_k\le I\) for all \(k\). By the properties of the ultrafilter,
\[
\mathcal{E}^\dagger(P_k) = \mathcal{L}_\mathcal{U}(\mathbf{1}_{\{1,\dots,k\}}(n)) = \mu(\{1,\dots,k\}) = 0
\] for each finite \(k\), since \(\{1,\dots,k\}\notin\mathcal{U}\). On
the other hand, \[
\mathcal{E}^\dagger(I) = 1.
\] Thus \[
\lim_{k\to\infty} \mathcal{E}^\dagger(P_k) = 0 \neq 1 = \mathcal{E}^\dagger(I),
\] even though the sequence \(P_k\) is monotone increasing, dominated by
\(I\), and the dominating operator has finite evaluation
\(\mathcal{E}^\dagger(I)=1\).

This shows that dominated continuity fails for \(\mathcal{E}^\dagger\).
Moreover, \(\mathcal{E}^\dagger\) is not of trace form: if there were a
Borel-measurable nondecreasing \(h\) and \(c>0\) such that \[
\mathcal{E}^\dagger(D_x) = c\, \operatorname{Tr}(h(D_x))
\] for all bounded diagonal \(D_x\), then the left-hand side would
depend only on the ultrafilter limit of the sequence \(x\), whereas the
right-hand side depends on the whole sequence via the (possibly
regularized) sum of \(h(x_n)\). In particular, sequences with the same
ultrafilter limit but different summation behavior would force a
contradiction. Thus dropping dominated continuity allows non-trace-form
evaluators that are still unitarily invariant, projector-local, and
normalized on the diagonal algebra.

\subsection{Dropping projector-locality}

We next show that projector-locality is an independent requirement. A
natural example is provided by the operator norm.

Let \(H\) be a separable Hilbert space, and consider the class of
bounded self-adjoint operators on \(H\). Define \[
\mathcal{E}_\ast(X) := \|X\|_{\mathrm{op}}
\] for bounded self-adjoint \(X\), where \(\|\cdot\|_{\mathrm{op}}\)
denotes the operator norm.

We note the following:

\begin{enumerate}
\def\labelenumi{\arabic{enumi}.}
\item
  \emph{Unitary invariance.}\\
  For any unitary \(U\), \[
  \mathcal{E}_\ast(U^* X U) = \|U^* X U\|_{\mathrm{op}} = \|X\|_{\mathrm{op}} = \mathcal{E}_\ast(X),
  \] so \(\mathcal{E}_\ast\) is unitarily invariant.
\item
  \emph{Normalization.}\\
  \(\mathcal{E}_\ast(I) = \|I\|_{\mathrm{op}} = 1\).
\item
  \emph{Failure of projector-locality.}\\
  Let \(\{P_j\}_{j=1}^N\) be a finite family of pairwise orthogonal
  projections with \(\sum_{j=1}^N P_j = I\), and fix a scalar \(c>0\).
  Then \[
  \mathcal{E}_\ast(c I) = \|cI\|_{\mathrm{op}} = c,
  \] while for each \(j\), \[
  \mathcal{E}_\ast(c P_j) = \|c P_j\|_{\mathrm{op}} = c.
  \] Thus \[
  \sum_{j=1}^N \mathcal{E}_\ast(c P_j) = N c \neq c = \mathcal{E}_\ast(c I)
  \] whenever \(N\ge2\). Hence \(\mathcal{E}_\ast\) fails
  projector-locality in the strongest possible way: it does not
  decompose additively over spectral partitions of the identity.
\end{enumerate}

The functional \(\mathcal{E}_\ast\) is therefore a unitarily invariant,
normalized quantity on bounded self-adjoint operators that is not of
trace form and does not satisfy the locality axiom. It illustrates that
projector-locality is not implied by unitary invariance and basic
boundedness properties; it must be imposed explicitly if one wishes to
force a trace-form representation.

\subsection{Summary}

The examples above show that:

\begin{enumerate}
\def\labelenumi{\arabic{enumi}.}
\tightlist
\item
  Without dominated continuity, it is possible to construct evaluators
  that are finitely additive on spectral projections, unitarily
  invariant, projector-local on a natural algebra, and normalized, but
  which fail to be countably additive in the spectral measure sense and
  are not representable by a trace form.
\item
  Without projector-locality, even very natural unitarily invariant
  functionals such as the operator norm fall outside the trace-form
  framework and cannot be recovered by integrating a density against the
  spectral trace measure.
\end{enumerate}

Thus the dominated continuity and projector-locality assumptions in
\secref{subsec:axioms} are not redundant: both are needed, in addition
to unitary invariance, extensivity, and normalization, to obtain the
rigidity result of \thmref{thm:trace-form}.

\end{document}